\newcommand{\bbN}{{\mathbb{N}}}
\newcommand{\bbR}{{\mathbb{R}}}
\newcommand{\bbZ}{{\mathbb{Z}}}
\newcommand{\bbC}{{\mathbb{C}}}
\newcommand{\bbT}{{\mathbb{T}}}
\newcommand{\cB}{{\mathcal B}}
\newcommand{\cD}{{\mathcal D}}
\newcommand{\cF}{{\mathcal F}}
\newcommand{\cH}{{\mathcal H}}
\newcommand{\cI}{{\mathcal I}}
\newcommand{\cM}{{\mathcal M}}
\newcommand{\cS}{{\mathcal S}}
\newcommand{\cV}{{\mathcal V}}
\newcommand{\cX}{{\mathcal X}}
\newcommand{\gQ}{{\mathfrak{Q}}}
\newcommand{\ga}{{\mathfrak{a}}}
\newcommand{\gb}{{\mathfrak{b}}}
\newcommand{\no}{\notag}
\newcommand{\lb}{\label}
\newcommand{\bi}{\bibitem}
\newcommand{\ul}{\underline}
\newcommand{\ol}{\overline}
\newcommand{\wti}{\widetilde}
\newcommand{\hatt}{\widehat}
\newcommand{\f}{\frac}
\newcommand{\dott}{\,\cdot\,}
\renewcommand{\ge}{\geqslant}
\renewcommand{\le}{\leqslant}
\newcommand{\supp}{\operatorname{supp}}
\newcommand{\tr}{\operatorname{tr}}
\newcommand{\dom}{\operatorname{dom }}
\newcommand{\loc}{\operatorname{loc}}
\newcommand{\locunif}{\operatorname{loc \, unif}}
\newcommand{\ran}{\operatorname{ran}}
\DeclareMathOperator{\sgn}{sgn}
\DeclareMathOperator{\CH}{CH}
\newcommand{\abs}[1]{\lvert#1\rvert}
\newcommand{\norm}[1]{\left\Vert#1\right\Vert}
\newcommand{\Green}{{\mathcal G}}
\newcommand*{\mailto}[1]{\href{mailto:#1}{\nolinkurl{#1}}}
\newcommand{\arxiv}[1]{\href{http://arxiv.org/abs/#1}{arXiv:#1}}
\allowdisplaybreaks \numberwithin{equation}{section}
\newcommand{\dpl}{{}_{H^{-1}(\bbR)}\langle}
\newcommand{\dpr}{\rangle_{H^1(\bbR)}}
\newtheorem{theorem}{Theorem}[section]
\newtheorem{lemma}[theorem]{Lemma}
\newtheorem{corollary}[theorem]{Corollary}
\newtheorem{definition}[theorem]{Definition}
\newtheorem{hypothesis}[theorem]{Hypothesis}
\newtheorem{example}[theorem]{Example}
\theoremstyle{remark}
\newtheorem{remark}[theorem]{Remark}
\begin{document}

\title[The Spectral Problem for the Camassa--Holm Hierarchy]{Some Remarks on the Spectral Problem Underlying the Camassa--Holm Hierarchy}

\author[F.\ Gesztesy]{Fritz Gesztesy}
\address{Department of Mathematics,
University of Missouri, Columbia, MO 65211, USA}
\email{\mailto{gesztesyf@missouri.edu}}
\urladdr{\url{http://www.math.missouri.edu/personnel/faculty/gesztesyf.html}}

\author[R.\ Weikard]{Rudi Weikard}
\address{Department of Mathematics, University of
Alabama at Birmingham, Birmingham, AL 35294, USA}
\email{\mailto{rudi@math.uab.edu}}
\urladdr{\url{http://www.math.math.uab.edu/\~{}rudi/index.html}}

\dedicatory{Dedicated with great pleasure to Ludwig Streit on the occasion of 
his 75th birthday.}
\date{\today}
\subjclass[2010]{Primary 34B24, 34C25, 34K13, 34L05, 34L40, 35Q58, 47A10, 47A75;
Secondary 34B20, 34C10, 34L25, 37K10, 47A63, 47E05.}
\keywords{Camassa--Holm hierarchy, left-definite spectral problems, distributional
coefficients, Floquet theory, supersymmetric formalism.}

\begin{abstract}
We study particular cases of left-definite eigenvalue problems
$$
A \psi = \lambda B \psi, \, \text{ with $A \geq \varepsilon I$ for some $\varepsilon > 0$
and $B$ self-adjoint,}
$$
but $B$ not necessarily positive or negative
definite, applicable, in particular, to the eigenvalue problem underlying the Camassa--Holm
hierarchy. In fact, we will treat a more general version where $A$ represents a positive
definite Schr\"odinger or Sturm--Liouville operator $T$ in $L^2(\bbR; dx)$  associated with a
differential expression of the form $\tau = - (d/dx) p(x) (d/dx) + q(x)$, $x \in \bbR$, and
$B$ represents an operator of multiplication by $r(x)$ in $L^2(\bbR; dx)$, which, in
general, is not a weight, that is, it is not nonnegative (or nonpositive) a.e.\ on $\bbR$. In fact,
our methods naturally permit us to treat certain classes of distributions (resp.,  measures) for 
the coefficients $q$ and $r$ and hence considerably extend the scope of this (generalized) 
eigenvalue problem, without having to change the underlying Hilbert space $L^2(\bbR; dx)$. 
Our approach relies on rewriting the eigenvalue problem
$A \psi = \lambda B \psi$ in the form
$$
A^{-1/2} B A^{-1/2} \chi = \lambda^{-1} \chi, \quad \chi = A^{1/2} \psi,
$$
and a careful study of (appropriate realizations of) the operator $A^{-1/2} B A^{-1/2}$ in
$L^2(\bbR; dx)$.

In the course of our treatment, we review and employ various necessary and sufficient
conditions for $q$ to be relatively bounded (resp., compact) and relatively form bounded
(resp., form compact) with respect to $T_0 = - d^2/dx^2$ defined on $H^2(\bbR)$.
In addition, we employ a supersymmetric formalism which permits us to factor the second-order
operator $T$ into a product of two first-order operators familiar from (and inspired by) Miura's
transformation linking the KdV and mKdV hierarchy of nonlinear evolution equations. We also
treat the case of periodic coefficients $q$ and $r$, where $q$ may be a
distribution and $r$ generates a measure and hence no smoothness is assumed for $q$ and $r$.
\end{abstract}

\maketitle


\section{Introduction}
\label{s1}

In this paper we are interested in a particular realization of a generalized left-definite
spectral problem originally derived from the Camassa--Holm hierarchy of integrable
nonlinear evolution equations.

Before specializing to the one-dimensional context at hand, we briefly
address the notion of generalized spectral problems associated
with operator pencils of the type $A - z B$, $z\in\bbC$, for appropriate
densely defined and closed linear operators $A$ and $B$ in a complex,
separable Hilbert space $\cH$. As discussed in \cite[Sect.\ VII.6]{Ka80}, there are
several (and in general, inequivalent) ways to reformulate such generalized
spectral problems. For instance, if $B$ is boundedly invertible, one may consider the
spectral problem for the operators $B^{-1}A$ or $A B^{-1}$, and in some cases
(e.g., if $B \geq \varepsilon I_{\cH}$ for some $\varepsilon > 0$, a case also called
a right-definite spectral problem)
also that of $B^{-1/2} A B^{-1/2}$. Similarly, if $A$ is boundedly invertible, the
spectral problem for the linear pencil $A - z B$ can be reformulated in terms of
the spectral problems for $A^{-1} B$ or $B A^{-1}$, and sometimes (e.g., if
$A \geq \varepsilon I_{\cH}$ for some $\varepsilon > 0$, a case also called
a left-definite spectral problem)) in terms of that of $A^{-1/2} B A^{-1/2}$.

There exists an enormous body of literature for these kinds of generalized
spectral problems and without any possibility of achieving completeness, we
refer, for instance, to \cite{AADH94}, \cite{Be13}, \cite{FL86}, \cite{GGP10},
\cite{He87}, \cite{He85}, \cite{He86}, \cite{HK80}, \cite{LM08},
\cite{Pe68}, \cite{Tr00}, and the extensive literature cited therein in the context of general
boundary value problems. In the context of indefinite Sturm--Liouville-type boundary value
problems we mention, for instance, \cite{AM87}, \cite{Be85}, \cite{Be07}, \cite{BKT09},
\cite{BMT11}, \cite{BP10}, \cite{BT07}, \cite{BBW09}, \cite{BBW12}, \cite{BBW12a},
\cite{BE83}, \cite{BF09}, \cite{Co97}, \cite{Co97a}, \cite{Co98}, \cite{DL86}, \cite{Ec12},
\cite{ET12}, \cite{FRY02}, \cite{Ka10}, \cite{KM07}, \cite{KT09}, \cite{KWZ01},
\cite{KWZ04}, \cite{KWZM03}, \cite{MZ02}, \cite{MZ05}, \cite{Ph13}, \cite{Vo96},
\cite[Chs.\ 5, 11, 12]{Ze05}, and again no attempt at a comprehensive account of
the existing literature is possible due to the enormous volume of the latter.

The prime motivation behind our attempt to study certain left-definite eigenvalue problems
is due to their natural occurrence in connection with the Camassa--Holm (CH) hierarchy. For
a detailed treatment and an extensive list of references we refer to \cite{GH03a},
\cite[Ch.\ 5]{GH03} and \cite{GH08}. The first few equations of the CH hierarchy (cf., e.g.,
\cite[Sect.\ 5.2]{GH03} for a recursive approach to the CH hierarchy) explicitly read
(with $u = u(x,t)$, $(x,t) \in \bbR^2$)
\begin{align}
\CH_0(u)&=4u_{t_{0}}-u_{xxt_{0}}+u_{xxx}-4u_{x}=0, \no \\
\CH_1(u)&=4u_{t_{1}}-u_{xxt_{1}} -2u u_{xxx}-4u_{x}u_{xx}
+24u u_{x}+c_{1}(u_{xxx}-4u_{x})=0, \no \\
\CH_2(u)&=4u_{t_{2}}-u_{xxt_{2}}+2u^2 u_{xxx}-8u u_{x} u_{xx}
-40u^2 u_{x} \lb{1.1} \\
& \quad +2(u_{xxx}-4u_x)\Green\big(u_{x}^2+8u^2\big)
-8(4u-u_{xx})\Green\big(u_{x} u_{xx}+8u u_{x} \big) \no \\
&\quad +c_1(-2u u_{xxx}-4u_{x}u_{xx}+24u u_{x})
+c_{2}(u_{xxx}-4u_{x})=0, \, \text{  etc.,} \no
\end{align}
for appropriate constants $c_\ell$, $\ell\in\bbN$. Here $\Green$ is given by
\begin{equation}
\Green\colon \begin{cases} L^\infty(\bbR; dx)\to L^\infty(\bbR; dx), \\
\, v \mapsto (\Green v)(x)=\frac14 \int_{\bbR} dy\, e^{-2\abs{x-y}} v(y),
\quad x\in\bbR,
\end{cases}    \lb{1.2}
\end{equation}
and one observes that $\Green$ is the
resolvent of minus the one-dimensional Laplacian at energy parameter
equal to $-4$, that is,
\begin{equation}
    \Green=\bigg(-\frac{d^2}{dx^2}+4\bigg)^{-1}.     \lb{1.3}
\end{equation}

The spectral problem underlying
the CH hierarchy can then be cast in the form (with ``prime'' denoting $d/dx$),
\begin{equation}
\Phi'(z,x)=U(z,x)\Phi(z,x), \quad (z,x) \in \bbC \times \bbR,    \lb{1.4}
\end{equation}
where
\begin{align}
\begin{split}
\Phi(z,x)=\begin{pmatrix} \phi_1(z,x) \\ \phi_2(z,x)
\end{pmatrix}, \quad U(z,x)=\begin{pmatrix} -1 &1\\
z [u_{xx}(x) - 4u(x)] &1 \end{pmatrix},& \\
(z,x) \in\bbC \times \bbR.&    \lb{1.5}
\end{split}
\end{align}
Eliminating $\phi_2$ in \eqref{1.4} then results in the scalar (weighted) spectral problem
\begin{equation}
- \phi''(z,x) + \phi(z,x) = z [u_{xx}(x) - 4u(x)] \phi(z,x), \quad
(z,x) \in (\bbC\backslash\{0\}) \times \bbR.   \lb{1.6}
\end{equation}

In the specific context of the left-definite Camassa--Holm spectral problem we refer to 
\cite{BSS00}, \cite{BSS05}, 
\cite{Be04}, \cite{BBW12}, \cite{Co97}, \cite{Co97a}, \cite{Co98}, \cite{Co01}, \cite{CGI06}
\cite{CL03}, \cite{DL86}, \cite{ET13}, \cite{GH08}, \cite{KKM09}, \cite{Ko04}, \cite{Ko11},
\cite{Mc01}, \cite{Mc03}, \cite{Mc03a}, \cite{Mc04}, and the literature cited therein.

Rather than directly studying \eqref{1.6} in this note, we will study
some of its generalizations and hence focus on several spectral problems originating with the
general Sturm--Liouville equation
\begin{equation}
- (p(x) \psi'(z,x))' + q(x) \psi(z,x) = z r(x) \psi(z,x), \quad
(z,x) \in (\bbC\backslash\{0\}) \times \bbR,     \lb{1.7}
\end{equation}
under various hypotheses on the coefficients $p, q, r$ to be described in more detail later on
and with emphasis on the fact that $r$ may change its sign.
At this point we assume the following basic requirements on $p, q, r$ (but we emphasize that
later on we will consider vastly more general situations where $q$ and $r$ are permitted to
lie in certain classes of distributions):

\begin{hypothesis} \lb{h1.1}
$(i)$ Suppose that $p>0$ a.e.\ on $\bbR$, $p^{-1} \in L^1_{\loc}(\bbR; dx)$,
and that $q, r \in L^1_{\loc}(\bbR; dx)$ are real-valued a.e.\ on $\bbR$. In
addition, assume that $r \neq 0$ on a set of positive Lebesgue measure and that
\begin{equation}
\pm \lim_{x \to \pm \infty} \int^x dx' \, p(x')^{-1/2} = \infty.   \lb{1.8}
\end{equation}
$(ii)$ Introducing the differential expression
\begin{equation}
\tau = - \f{d}{dx} p(x) \f{d}{dx} + q(x), \quad x \in \bbR,    \lb{1.9}
\end{equation}
and the associated minimal operator $T_{\min}$ in $L^2(\bbR; dx)$ by
\begin{align}
& T_{\min} f = \tau f,   \no \\
& \, f \in \dom(T_{\min}) = \big\{g \in L^2(\bbR; dx) \,\big|\, g, (p g') \in AC_{\loc}(\bbR); \, \supp \, (g) \, \text{compact};    \lb{1.10} \\
& \hspace*{8.8cm}  \tau g \in L^2(\bbR; dx)\big\},     \no
\end{align}
we assume that for some $\varepsilon >0$,
\begin{equation}
T_{\min} \geq \varepsilon I_{L^2(\bbR; dx)}.   \lb{1.11}
\end{equation}
\end{hypothesis}

We note that our assumptions \eqref{1.8} and \eqref{1.11} imply that
$\tau$ is in the limit point case at $+\infty$ and $-\infty$ (cf., e.g., \cite{CG03},
\cite{Ge93}, \cite{Ha48}, \cite{Re51}). This permits one
to introduce the maximally defined self-adjoint operator $T$ in $L^2(\bbR; dx)$ associated
with $\tau$ by
\begin{align}
\begin{split}
& T f = \tau f,   \\
& \, f \in \dom(T) = \big\{g \in L^2(\bbR; dx) \,\big|\, g, (p g') \in AC_{\loc}(\bbR);
\, \tau g \in L^2(\bbR; dx)\big\}      \lb{1.12}
\end{split}
\end{align}
(where $AC_{\loc}(\bbR)$ denotes the set of locally absolutely continuous
functions on $\bbR$). In particular, $T$ is the closure of $T_{\min}$,
\begin{equation}
T = \ol{T_{\min}},   \lb{1.13}
\end{equation}
and hence also
\begin{equation}
T \geq \varepsilon I_{L^2(\bbR; dx)}.   \lb{1.14}
\end{equation}

\begin{remark} \lb{r1.2}
By a result proven in Yafaev \cite{Ya72}, if $p =1$ and $q \geq 0$ a.e.\ on $\bbR$, \eqref{1.14} holds for some $\varepsilon > 0$ if and only if there exist $c_0 > 0$ and $R_0 > 0$ such that for all $x\in\bbR$ and all $a \geq R_0$, 
\begin{equation}
\int_x^{x+a} dx \, q(x) \geq c_0.   \lb{1.15}
\end{equation}
If $p$ is bounded below by some $\varepsilon_0 > 0$ (which we may choose smaller than one), one has 
\begin{equation} 
\int_\bbR dx \big[p(x) |u'(x)|^2 + q(x) |u(x)|^2\big] \geq 
\varepsilon_0^{-1}\int_\bbR dx \big[|u'(x)|^2+q(x) |u(x)|^2\big].
\end{equation} 
Hence \eqref{1.15} is then still sufficient for \eqref{1.14} to hold.

We also note that Theorem\ 3 in \cite{Ya72} shows that $q\geq0$ is not
necessary for \eqref{1.14} to hold. In fact, if $q_2\geq0$, but 
$\int_a^{a+1} dx \, q_2(x) \leq c$ for all $a\in\bbR$, one finds
\begin{equation}
-(c+4c^2)\int_\bbR dx \, |u(x)|^2 \leq \int_\bbR dx \big[|u'(x)|^2 - q_2(x) |u(x)|^2\big].
\end{equation}
Hence if $p=1$ and $q=\varepsilon+c+4c^2-q_2$ one obtains \eqref{1.14} even though $q$ may assume negative values.
\end{remark}

Given these preparations, we now associate the weighted eigenvalue equation \eqref{1.7}
with a standard self-adjoint spectral problem of the form
\begin{align}
\begin{split}
& \ul{T^{-1/2} r T^{-1/2}} \, \chi = \zeta \chi,   \\
& \, \chi (\zeta,x) = \big(T^{1/2} \psi (z,\cdot)\big)(x),
\;\; \zeta = 1/z \in \bbC\backslash\{0\}, \; x \in \bbR,     \lb{1.16}
\end{split}
\end{align}
for the integral operator $\ul{T^{-1/2} r T^{-1/2}}$ in $L^2(\bbR; dx)$, subject to
certain additional conditions on $p, q, r$. We use the particular notation $\ul{T^{-1/2} r T^{-1/2}}$
to underscore the particular care that needs to be taken with interpreting this expression as a
bounded, self-adjoint operator in $L^2(\bbR; dx)$ (pertinent details can be found in \eqref{2.19a}
and, especially, in \eqref{3.74}). It is important to note that in contrast to a number of papers that find it
necessary to use different Hilbert spaces in connection with a left-definite spectral problem (in some cases the weight $r$ is replaced by $|r|$, in other situations the new Hilbert space is
coefficient-dependent), our treatment works with one and the same underlying Hilbert space
$L^2(\bbR; dx)$.

We emphasize that rewriting \eqref{1.7} in the form \eqref{1.16} is not new. In
particular, in the context of the CH spectral problem \eqref{1.6} this has briefly
been used, for instance, in \cite{CM99} (in the periodic case), in \cite{Co01}
(in the context of the CH scattering problem), in
\cite{GH08} (in connection with real-valued algebro-geometric CH solutions),
and in \cite{Mc03} (in connection with CH flows and Fredholm determinants).
However, apart from the approach discussed in \cite{BBW09}, \cite{BBW12}, \cite{BBW12a}, most
investigations associated with the CH spectral problem \eqref{1.6}
appear to focus primarily on certain Liouville--Green transformations which transform \eqref{1.6}
into a Schr\"odinger equation for some effective potential coefficient (see, e.g., \cite{Co97},
\cite{Co97a}, \cite{Co98}). This requires additional assumptions on the coefficients which
in general can be avoided in the context of \eqref{1.16}. Indeed, the change of variables
\begin{equation}
\bbR \ni x \mapsto t=\int_0^x dx' \, p(x')^{-1},      \lb{1.17}
\end{equation}
 turns the equation $-(pu')'+qu=zru$ on $\bbR$ into
\begin{align}
\begin{split}
& -v''+Q v=z R v \, \text{ on $\bigg(- \int_{-\infty}^0 dx' \, p(x')^{-1},
\int_0^{+\infty} dx' \, p(x')^{-1} \bigg)$},     \lb{1.18} \\
& \, v(t)=u(x(t)), \quad Q(t)=p(x(t))q(x(t)), \quad R(t)=p(x(t))r(x(t)).
\end{split}
\end{align}
However, assuming for instance, $\pm \int^{\pm \infty} dx \, p(x) = \infty$, the change of
variables is only unitary between the spaces $L^2(\bbR; dx)$ and $L^2(\bbR; dx/p(x))$ and
hence necessitates a change in the underlying measure.

The primary aim of this note is to sketch a few instances in which the integral operator approach
in \eqref{1.16} naturally, and in a straightforward manner, leads to much more general spectral
results and hence is preferable to the Liouville--Green approach. In particular, we are interested in
generalized situations, where the coefficients $q$ and $r$ lie in certain classes of distributions.
To the best of our knowledge, this level of generality is new in this context.

In Section \ref{s2} we analyze basic spectral theory of $\ul{T^{-1/2} r T^{-1/2}}$ in
$L^2(\bbR; dx)$ assuming Hypothesis \ref{h1.1} and appropriate additional assumptions on
$p, q, r$. The more general case where $q$ and $r$ lie in certain classes of distributions is
treated in detail in Section \ref{s3}. There we heavily rely on supersymmetric methods and Miura transformations. This approach exploits the intimate relationship between spectral theory for
Schr\"odinger operators factorized into first-order differential operators and that of an
associated Dirac-type operator. Section \ref{s4} is devoted to applications in the special case
where $q$ and $r$ are periodic (for simplicity we take $p=1$). We permit $q$ to lie in a class of distributions and $r$ to be a signed measure, which underscores the novelty of our approach. 
Three appendices provide ample background results: 
Appendix \ref{sA} is devoted to basic facts on relative boundedness and compactness of operators 
and forms; the supersymmetric formalism relating Schr\"odinger and Dirac-type operators is 
presented in Appendix \ref{sB}, and details on sesquilinear forms and their associated operators 
are provided in Appendix \ref{sC}.

Finally, we briefly summarize some of the notation used in this paper: Let $\cH$ be a separable
complex Hilbert space, $(\cdot,\cdot)_{\cH}$ the scalar product in
$\cH$ (linear in the second factor), and $I_{\cH}$ the identity operator in $\cH$.
Next, let $T$ be a linear operator mapping (a subspace of) a
Banach space into another, with $\dom(T)$, $\ran(T)$, and $\ker(T)$ denoting the
domain, range, and kernel (i.e., null space) of $T$. The closure of a closable
operator $S$ is denoted by $\ol S$.

The spectrum, essential spectrum, point spectrum, discrete spectrum, absolutely continuous
spectrum, and resolvent set of a closed linear operator in $\cH$ will be denoted by
$\sigma(\cdot)$, $\sigma_{\rm ess}(\cdot)$, $\sigma_{\rm p}(\cdot)$, $\sigma_{\rm d}(\cdot)$,
$\sigma_{\rm ac}(\cdot)$, and $\rho(\cdot)$, respectively.

The Banach spaces of bounded and compact linear operators in $\cH$ are
denoted by $\cB(\cH)$ and $\cB_\infty(\cH)$, respectively. Similarly,
the Schatten--von Neumann (trace) ideals will subsequently be denoted
by $\cB_s(\cH)$, $s\in (0,\infty)$. The analogous notation
$\cB(\cX_1,\cX_2)$, $\cB_\infty (\cX_1,\cX_2)$, etc., will be used for bounded and compact
operators between two Banach spaces $\cX_1$ and $\cX_2$.
Moreover, $\cX_1\hookrightarrow \cX_2$ denotes the continuous embedding
of the Banach space $\cX_1$ into the Banach space $\cX_2$. Throughout this manuscript we
use the convention that if $X$ denotes a Banach space, $X^*$ denotes the {\it adjoint space}
of continuous  conjugate linear functionals on $X$, also known as the {\it conjugate dual} of $X$.

In the bulk of this note, $\cH$ will typically
represent the space $L^2(\bbR; dx)$. Operators of multiplication by a
function $V\in L^1_{\loc}(\bbR; dx)$ in $L^2(\bbR; dx)$ will by a slight abuse of notation again be denoted by $V$ (rather than the frequently used, but more cumbersome, notation $M_V$)
and unless otherwise stated, will always assumed to be maximally defined in $L^2(\bbR; dx)$
(i.e., $\dom(V) = \big\{f \in L^2(\bbR; dx) \,\big|\, Vf \in L^2(\bbR; dx)\big\}$). Moreover, in
subsequent sections, the identity operator
$I_{L^2(\bbR; dx)}$ in $L^2(\bbR; dx)$ will simply be denoted by $I$ for brevity.

The symbol $\cD(\bbR)$ denotes the space of test functions $C_0^{\infty}(\bbR)$ with its usual
(inductive limit) topology. The corresponding space of continuous linear functionals on
$\cD(\bbR)$ is denoted by $\cD^{\prime}(\bbR)$ (i.e.,
$\cD^{\prime}(\bbR) = C_0^{\infty}(\bbR)^{\prime}$).

\section{General Spectral Theory of $\ul{T^{-1/2} r T^{-1/2}}$}
\lb{s2}

In this section we derive some general spectral properties of
$\ul{T^{-1/2} r T^{-1/2}}$ which reproduce some known results that were
originally derived in the CH context of \eqref{1.6}, but now we prove
them under considerably more general conditions on the coefficients
$p, q, r$, and generally, with great ease. In this section $p, q, r$ will satisfy
Hypothesis \ref{h1.1} and appropriate additional assumptions. (The case
where $q, r$ lie in certain classes of distributions will be treated in Section \ref{s3}.)

For a quick summary of the notions of relatively bounded and compact
operators and forms frequently used in this section, we refer to Appendix \ref{sA}.

Before analyzing the operator $\ul{T^{-1/2} r T^{-1/2}}$ we recall three
useful results:

We denote by $T_0$ (minus) the usual Laplacian in $L^2(\bbR; dx)$ defined by
\begin{align}
& T_0 f = - f'',   \lb{2.1} \\
& \, f \in \dom(T_0) = \big\{g \in L^2(\bbR; dx) \,\big|\, g, g' \in AC_{\loc}(\bbR);
\, g'' \in L^2(\bbR; dx)\big\} = H^{2}(\bbR),    \no
\end{align}
where $H^{m}(\bbR)$, $m \in \bbN$, abbreviate the usual Sobolev
spaces of functions whose distributional derivatives up to order $m$ lie in
$L^2(\bbR; dx)$.

In the following it is useful to introduce the spaces of locally uniformly $L^p$-integrable
functions on $\bbR$,
\begin{equation}
L^p_{\locunif}(\bbR; dx) = \bigg\{f \in L^p_{\loc}(\bbR; dx) \,\bigg|\,
\sup_{a\in\bbR} \bigg(\int_a^{a+1} dx \, |f(x)|^p\bigg) < \infty\bigg\}, \quad
p \in [1,\infty).      \lb{2.1a}
\end{equation}
Equivalently, let
\begin{equation}
\eta \in C_0^{\infty}(\bbR), \quad 0 \leq \eta \leq 1, \quad \eta|_{B(0; 1)} = 1,    \lb{2.1b}
\end{equation}
with $B(x;r) \subset \bbR$ the open ball centered at $x_0 \in \bbR$ and radius $r>0$, then
\begin{equation}
L^p_{\locunif}(\bbR; dx) = \Big\{f \in L^p_{\loc}(\bbR; dx) \,\Big|\,
\sup_{a\in\bbR} \| \eta( \cdot - a) f\|_{L^p(\bbR; dx)} < \infty\Big\}, \quad
p \in [1,\infty).      \lb{2.1c}
\end{equation}

We refer to Appendix \ref{sA} for basic notions in connection with relatively bounded linear
operators.

\begin{theorem} $($\cite[Theorem\ 2.7.1]{Sc81}, \cite[p.\ 35]{Si71}.$)$ \lb{t2.1}
Let $V, w \in L^2_{\loc} (\bbR; dx)$. \\
Then the following conditions $(i)$--$(iv)$ are
equivalent:
\begin{align}
& (i)  \;\;\, \dom(w) \supseteq \dom\big(T_0^{1/2}\big) = H^1(\bbR).    \lb{2.2} \\
& (ii) \,\,\, w \in L^2_{\locunif}(\bbR; dx).    \lb{2.3} \\
& (iii) \text{ For some $C>0$,}   \no \\
& \qquad \, \|w f\|_{L^2(\bbR; dx)}^2 \leq
C \Big[\big\| T_0^{1/2} f\|_{L^2(\bbR; dx)}^2
+  \|f\|_{L^2(\bbR; dx)}^2\Big],   \lb{2.3a} \\
& \hspace*{4.52cm}  f \in \dom\big(T_0^{1/2}\big) = H^{1}(\bbR).  \no \\
& (iv) \text{ For all $\varepsilon > 0$, there exists $C_{\varepsilon}>0$
such that:}   \no \\
& \qquad \, \|w f\|_{L^2(\bbR; dx)}^2 \leq \varepsilon
\big\| T_0^{1/2} f\big\|_{L^2(\bbR; dx)}^2
+ C_{\varepsilon} \|f\|_{L^2(\bbR; dx)}^2,   \lb{2.4} \\
& \hspace*{4.52cm}  f \in \dom\big(T_0^{1/2}\big) = H^{1}(\bbR).  \no
\intertext{Moreover, also the following conditions $(v)$--$(viii)$ are equivalent:}
& (v)  \;\;\, \dom(V) \supseteq \dom(T_0) = H^2(\bbR).    \lb{2.5} \\
& (vi) \,\,\, V \in L^2_{\locunif}(\bbR; dx).
\lb{2.6} \\
& (vii) \text{ For some $C>0$,}   \no \\
& \qquad \;\, \|V f\|_{L^2(\bbR; dx)}^2 \leq C \Big[\| T_0 f\|_{L^2(\bbR; dx)}^2
+ \|f\|_{L^2(\bbR; dx)}^2\Big],   \lb{2.6a} \\
& \hspace*{4.7cm}  f \in \dom(T_0) = H^{2}(\bbR).  \no \\
& (viii) \text{ For all $\varepsilon > 0$, there exists $C_{\varepsilon}>0$
such that:}   \no \\
& \qquad \;\;\, \|V f\|_{L^2(\bbR; dx)}^2 \leq \varepsilon
\| T_0 f\|_{L^2(\bbR; dx)}^2
+ C_{\varepsilon} \|f\|_{L^2(\bbR; dx)}^2,   \lb{2.7} \\
& \hspace*{4.75cm}  f \in \dom(T_0) = H^{2}(\bbR).  \no
\end{align}
In fact, it is possible to replace $T_0^{1/2}$ by any polynomial
$P_m\big(T_0^{1/2}\big)$ of degree $m\in\bbN$ in connection with
items $(i)$--$(iv)$.
\end{theorem}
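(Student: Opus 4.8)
The plan is to prove the two blocks of equivalences by short implication cycles whose only analytically substantive step is a scaled one–dimensional Sobolev estimate combined with a localization argument; everything else is soft functional analysis. Two facts are used repeatedly: the operator of multiplication by an $L^2_{\loc}(\bbR; dx)$-function, taken with its maximal domain, is closed in $L^2(\bbR; dx)$; and $L^2_{\locunif}(\bbR; dx)$ admits the equivalent descriptions recorded in \eqref{2.1a}--\eqref{2.1c}, so that finiteness of $\sup_{a\in\bbR}\int_a^{a+\ell}|\cdot|^2\,dx$ for one $\ell>0$ is equivalent to finiteness for all $\ell>0$. In the first block, $(iv)\Rightarrow(iii)$ (take $\varepsilon=1$) and $(iii)\Rightarrow(i)$ (the bound in $(iii)$ forces $wf\in L^2(\bbR; dx)$ for every $f\in H^1(\bbR)$) are immediate. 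For $(i)\Rightarrow(iii)$ I would invoke the closed graph theorem: since $w$ is closed and $H^1(\bbR)\hookrightarrow L^2(\bbR; dx)$ continuously, $w$ regarded as a map from the Hilbert space $H^1(\bbR)$ into $L^2(\bbR; dx)$ has closed graph and is therefore bounded, and $\|f\|_{H^1(\bbR)}^2=\|T_0^{1/2}f\|_{L^2(\bbR; dx)}^2+\|f\|_{L^2(\bbR; dx)}^2$ (Plancherel's theorem) turns this into $(iii)$. For $(iii)\Rightarrow(ii)$ I would test $(iii)$ on the translates $f=\eta(\cdot-a)$, $a\in\bbR$, of the cutoff from \eqref{2.1b}: the right-hand side equals the $a$-independent constant $C\big(\|\eta'\|_{L^2(\bbR; dx)}^2+\|\eta\|_{L^2(\bbR; dx)}^2\big)$, while $\eta(\cdot-a)\equiv1$ on $(a-1,a+1)$, whence $\sup_{a\in\bbR}\int_{a-1}^{a+1}|w|^2\,dx<\infty$, i.e.\ $w\in L^2_{\locunif}(\bbR; dx)$.

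The core is $(ii)\Rightarrow(iv)$. For $f\in H^1(\bbR)$ and an interval $I$ of length $\ell\in(0,1]$, averaging the identity $|f(x)|^2=|f(y)|^2+2\int_y^x\Re\big(\overline{f}\,f'\big)$ over $y\in I$ and applying the Cauchy--Schwarz inequality yields the scaled bound
\[
\|f\|_{L^\infty(I)}^2\le\tfrac{2}{\ell}\,\|f\|_{L^2(I)}^2+\ell\,\|f'\|_{L^2(I)}^2 .
\]
Splitting $\bbR$ into consecutive intervals $I_j$ of length $\ell$ and using $\int_{I_j}|w|^2\,dx\le\|w\|_{L^2_{\locunif}(\bbR; dx)}^2$ (as $|I_j|\le1$), one obtains
\[
\|wf\|_{L^2(\bbR; dx)}^2=\sum_j\int_{I_j}|wf|^2\,dx\le\|w\|_{L^2_{\locunif}(\bbR; dx)}^2\Big(\tfrac{2}{\ell}\,\|f\|_{L^2(\bbR; dx)}^2+\ell\,\|f'\|_{L^2(\bbR; dx)}^2\Big);
\]
since $\|f'\|_{L^2(\bbR; dx)}=\|T_0^{1/2}f\|_{L^2(\bbR; dx)}$, choosing $\ell\in(0,1]$ so small that $\ell\,\|w\|_{L^2_{\locunif}(\bbR; dx)}^2<\varepsilon$ gives $(iv)$. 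This local-to-global passage, converting uniform $L^2$-control on unit intervals into an arbitrarily small relative bound, is the only point with genuine content, and I would expect it to be the main (though still routine) obstacle; it is essentially the argument of \cite{Sc81} and \cite{Si71}.

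For the second block the scheme is identical: $(viii)\Rightarrow(vii)\Rightarrow(v)$ are trivial, $(v)\Rightarrow(vii)$ is the closed graph theorem (now $\dom(V)\supseteq H^2(\bbR)=\dom(T_0)$, with $\|f\|_{H^2(\bbR)}^2$ equivalent to $\|T_0 f\|_{L^2(\bbR; dx)}^2+\|f\|_{L^2(\bbR; dx)}^2$), and $(vii)\Rightarrow(vi)$ follows by testing on $\eta(\cdot-a)$, the right-hand side now being $C\big(\|\eta''\|_{L^2(\bbR; dx)}^2+\|\eta\|_{L^2(\bbR; dx)}^2\big)$. For $(vi)\Rightarrow(viii)$ I would reuse $(ii)\Rightarrow(iv)$ with $w=V$, giving $\|Vf\|_{L^2(\bbR; dx)}^2\le\varepsilon_1\|f'\|_{L^2(\bbR; dx)}^2+C_{\varepsilon_1}\|f\|_{L^2(\bbR; dx)}^2$, and combine it with the interpolation estimate $\|f'\|_{L^2(\bbR; dx)}^2=-\big(f,f''\big)_{L^2(\bbR; dx)}\le\delta\|f''\|_{L^2(\bbR; dx)}^2+(4\delta)^{-1}\|f\|_{L^2(\bbR; dx)}^2$; appropriate choices of $\varepsilon_1,\delta$ then yield $(viii)$. (Since $(ii)$ and $(vi)$ literally coincide, the two blocks in fact join into a single equivalence pivoting on membership in $L^2_{\locunif}(\bbR; dx)$.)

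Finally, to replace $T_0^{1/2}$ by a polynomial $P_m\big(T_0^{1/2}\big)$ of degree $m\in\bbN$ in $(i)$--$(iv)$, observe that $\dom\big(P_m(T_0^{1/2})\big)=\dom\big(T_0^{m/2}\big)=H^m(\bbR)$, so the analogue of $(i)\Leftrightarrow(iii)$ is again the closed graph theorem and the analogue of $(iii)\Rightarrow(ii)$ follows by testing on $\eta(\cdot-a)$, since each $\|T_0^{k/2}\eta(\cdot-a)\|_{L^2(\bbR; dx)}=\|\eta^{(k)}\|_{L^2(\bbR; dx)}$ is $a$-independent, hence so is $\|P_m(T_0^{1/2})\eta(\cdot-a)\|_{L^2(\bbR; dx)}$. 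To return from $(ii)$ to the $P_m$-version of $(iv)$ one combines the already proved $(ii)\Rightarrow(iv)$ with the spectral-calculus inequality $s^2\le\varepsilon\,|P_m(s)|^2+C_\varepsilon$ for all $s\ge0$, which holds because $\deg P_m=m\ge2$ forces $|P_m(s)|^2/s^2\to\infty$ as $s\to\infty$ (for $m=1$ the statement reduces to the base case, since then $\dom\big(P_1(T_0^{1/2})\big)=H^1(\bbR)$ and $|P_1(s)|^2$ is comparable to $s^2$ up to an additive constant). The only points requiring a little care throughout are the closedness of the maximally defined multiplication operator and the interchangeability of interval lengths in the definition of $L^2_{\locunif}(\bbR; dx)$, both already available from \eqref{2.1a}--\eqref{2.1c}.
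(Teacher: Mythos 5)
Your proposal is correct and follows essentially the same route as the paper's sketch: closed graph theorem for $(i)\Rightarrow(iii)$, testing \eqref{2.3a} on translates of a fixed bump function for $(iii)\Rightarrow(ii)$, a localized Sobolev embedding summed over a partition of $\bbR$ into short intervals for $(ii)\Rightarrow(iv)$, and the spectral inequality $s^2\le\varepsilon|P_m(s)|^2+C_\varepsilon$ for the second block and the polynomial extension. The only cosmetic differences are your use of the compactly supported cutoff $\eta(\cdot-a)$ in place of the paper's Gaussian $e^{1-(x-a)^2}$, and rescaling the interval length $\ell$ instead of the paper's $\varepsilon$-weighted Cauchy--Schwarz on unit intervals.
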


We emphasize the remarkable fact that according to items $(iii)$, $(iv)$ and $(vii)$, $(viii)$,
relative form and operator boundedness is actually equivalent to {\it infinitesimal} form and
operator boundedness in Theorem \ref{t2.1}.

For completeness, we briefly sketch some of the principal ideas underlying items $(i)$--$(iv)$
in Theorem \ref{t2.1}, particularly, focusing on item $(ii)$:
That item $(i)$ implies item $(iii)$ is of course a consequence of the closed graph theorem.
Exploiting continuity of $f \in H^1(\bbR)$, yields for arbitrary $\varepsilon > 0$,
\begin{align}
|f(x)|^2 - |f(x')|^2 &= \int_{x'}^x dy \, \big[\ol{f(y} f'(y) + \ol{f'(y)} f(y)\big]     \\
& \leq \varepsilon \int_{\cI} dy \, |f'(y|^2 + \varepsilon^{-1} \int_{\cI} dy \, |f(y)|^2,  \quad
f \in H^{1}(\bbR), \; x, x' \in \cI,   \no 
\end{align}
with $\cI\subset\bbR$ an arbitrary interval of length one.
The use of the mean value theorem for integrals then permits one to choose $x' \in \cI$
such that
\begin{equation}
|f(x')|^2 = \int_{\cI} dy \, |f(y)|^2
\end{equation}
implying
\begin{equation}
|f(x)|^2 \leq \varepsilon \int_{\cI} dx' \, |f'(x')|^2 + (1+\varepsilon^{-1})
\int_{\cI} dx' \, |f(x')|^2, \quad f \in H^{1}(\bbR), \; x\in\cI,    \lb{2.7a}
\end{equation}
and hence after summing over all intervals $\cI$ of length one, and using boundedness of
$f \in H^1(\bbR)$,
\begin{equation}
|f(x)|^2 \leq\| f \|_{L^{\infty}(\bbR; dx)}^2 \leq \varepsilon \| f' \|_{L^2(\bbR; dx)}^2 +
(1 + \varepsilon^{-1}) \| f \|_{L^2(\bbR; dx)}^2, \quad f \in H^{1}(\bbR), \; x\in\bbR.
\end{equation}
Multiplying \eqref{2.7a} by $|w(x)|^2$ and integrating with respect to $x$ over $\cI$ yields
\begin{equation}
\int_{\cI} dx \, |w(x)|^2 |f(x)|^2 \leq \varepsilon C_0 \int_{\cI} dx' \, |f'(x')|^2
+ (1+\varepsilon^{-1}) C_0 \int_{\cI} dx' \, |f(x')|^2,
\end{equation}
and summing again over all intervals $\cI$ of length implies
\begin{equation}
\|w f\|_{L^2(\bbR; dx)}^2 \leq \varepsilon C_0 \|f'\|_{L^2(\bbR)}^2
+ \big(1 + \varepsilon^{-1}\big) C_0 \|f\|_{L^2(\bbR; dx)}^2, \quad
f \in H^{1}(\bbR),    \lb{2.8}
\end{equation}
where
\begin{equation}
C_0 := \sup_{a\in\bbR} \bigg(\int_a^{a+1} dx \, |w(x)|^2\bigg) < \infty,
\end{equation}
illustrating the sufficiency part of condition $w \in L^2_{\locunif}(\bbR; dx)$ in item $(ii)$ for
item $(iv)$ to hold.

Next, consider
$\psi(x) = e^{1-x^2}$, $\psi_a (x) = \psi(x-a)$, $x, a \in \bbR$. Then
\begin{align}
\int_a^{a+1} dx \, |w(x)|^2 & \leq \int_{\bbR} dx \, \big[|w(x)| |\psi_a(x)|\big]^2
\leq C \Big[\big\|T_0^{1/2} \psi_a\big\|^2_{L^2(\bbR; dx)}
+ \|\psi_a\|^2_{L^2(\bbR; dx)}\Big]  \\
& \leq \widehat C \big[\|\psi'\|_{L^2(\bbR; dx)}^2 + \|\psi\|_{L^2(\bbR; dx)}^2\big] = \widetilde C,
\end{align}
with $\widetilde C$ independent of $a$, illustrates necessity of the condition
$w \in L^2_{\locunif}(\bbR; dx)$ in item $(ii)$ for item $(iii)$ to hold.

Given $\varepsilon > 0$, there exists $\eta(\varepsilon) > 0$, such that the obvious inequality
\begin{align}
\begin{split}
\| f' \|_{L^2(\bbR; dx)}^2 \leq \varepsilon
\big\| T_0^{m/2} \big\|_{L^2(\bbR; dx)}^2 + \eta(\varepsilon) \| f \|_{L^2(\bbR; dx)}^2,& \\
f \in \dom\big(T_0^{m/2}\big), \; m\in\bbN, \; m\geq 2,&
\end{split}
\end{align}
holds. (It suffices applying the Fourier transform and using $|p| \leq \varepsilon |p|^{m}
+ \eta(\varepsilon)$, $m\in\bbN$, $m\geq 2$) to extend this
to polynomials in $T_0^{1/2}$. This illustrates the sufficiency of the condition
$V \in L^2_{\locunif}(\bbR; dx)$ in item $(vi)$ for item $(viii)$ to hold.

We note that items $(i)$--$(iv)$ in Theorem \ref{t2.1} are mentioned in
\cite[p.\ 35]{Si71} without proof, but the crucial hint that
$f \in H^{1}(\bbR)$ implies that $f \in AC_{\loc}(\bbR) \cap L^{\infty}(\bbR;dx)$,
is made there. We also remark that Theorem\ 2.7.1 in \cite{Sc81} is primarily concerned with
items $(v)$--$(viii)$ in Theorem \ref{t2.1}. Nevertheless, its method of proof
also yields the results \eqref{2.2}--\eqref{2.4}, in particular, it contains
the fundamental inequality \eqref{2.8}.

Next, we also recall the following result (we refer to Appendix \ref{sA} for details on the notion
of relative compactness for linear operators):

\begin{theorem} \lb{t2.2} $($\cite[Theorem\ 3.7.5]{Sc81}, \cite[Sects.\ 15.7, 15.9]{Sc02}.$)$ ${}$ \\
Let $w \in L^2_{\loc} (\bbR; dx)$. Then the following conditions $(i)$--$(iii)$ are
equivalent:
\begin{align}
& (i)  \;\;\, \text{$w$ is $T_0^{1/2}$-compact.}    \lb{2.10} \\
& (ii)  \,\,\, \text{$w$ is $T_0$-compact.}    \lb{2.11} \\
& (iii) \, \lim_{|a|\to\infty} \bigg(\int_a^{a+1} dx \, |w(x)|^2\bigg) = 0.
\lb{2.12}
\end{align}
In fact, it is possible to replace $T_0^{1/2}$ by any polynomial
$P_m\big(T_0^{1/2}\big)$ of degree $m\in\bbN$ in item $(i)$.
\end{theorem}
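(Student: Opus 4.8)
The plan is to prove the chain $(i)\Rightarrow(ii)\Rightarrow(iii)\Rightarrow(i)$, leaning throughout on Theorem \ref{t2.1} and, crucially, on the quantitative inequality \eqref{2.8}; the extension to polynomials $P_m\big(T_0^{1/2}\big)$ will then follow from the same two nontrivial steps. Throughout I read ``$w$ is $T_0^{1/2}$-compact'' (resp.\ ``$T_0$-compact'') as ``$w(T_0+I)^{-1/2}\in\cB_\infty(L^2(\bbR;dx))$'' (resp.\ ``$w(T_0+I)^{-1}\in\cB_\infty(L^2(\bbR;dx))$''). The implication $(i)\Rightarrow(ii)$ is purely formal: one factors $w(T_0+I)^{-1}=\big[w(T_0+I)^{-1/2}\big](T_0+I)^{-1/2}$ as (compact)$\,\cdot\,$(bounded); equivalently, $w\colon H^2(\bbR)\to L^2(\bbR;dx)$ is the composition of the bounded embedding $H^2(\bbR)\hookrightarrow H^1(\bbR)$ with the compact map $w\colon H^1(\bbR)\to L^2(\bbR;dx)$.

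For $(ii)\Rightarrow(iii)$ I would use a translation/trial-function argument. Fix $\psi\in C_0^\infty(\bbR)$ with $\psi\geq0$, $\psi\not\equiv0$, and set $\psi_a(\cdot)=\psi(\cdot-a)$. Since $\psi_a\rightharpoonup0$ weakly in $L^2(\bbR;dx)$ as $|a|\to\infty$ and $w(T_0+I)^{-1}$ is compact, $\big\|w(T_0+I)^{-1}\psi_a\big\|_{L^2(\bbR;dx)}\to0$. As $(T_0+I)^{-1}$ is convolution by $\tfrac12 e^{-|\cdot|}$, one has $(T_0+I)^{-1}\psi_a=h_0(\cdot-a)$ with $h_0:=\big(\tfrac12 e^{-|\cdot|}\big)\ast\psi$ continuous and strictly positive on $\bbR$, hence $h_0\geq c_0>0$ on $[0,1]$, so that
\[ \big\|w(T_0+I)^{-1}\psi_a\big\|_{L^2(\bbR;dx)}^2=\int_{\bbR}dx\,|w(x)|^2\,h_0(x-a)^2\geq c_0^2\int_a^{a+1}dx\,|w(x)|^2 , \]
which forces $(iii)$.

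The main work is $(iii)\Rightarrow(i)$, carried out by truncation and operator-norm approximation. For $R>0$ put $w_R:=\chi_{[-R,R]}\,w\in L^2(\bbR;dx)$. First, $w_R(T_0+I)^{-1/2}$ is compact: $(T_0+I)^{-1/2}$ is convolution by $k\in L^2(\bbR;dx)$ (the inverse Fourier transform of $(\xi^2+1)^{-1/2}$, with $\|k\|_{L^2(\bbR;dx)}^2=\int_{\bbR}(\xi^2+1)^{-1}d\xi<\infty$), so $w_R(T_0+I)^{-1/2}$ has integral kernel $w_R(x)\,k(x-y)$ and is Hilbert--Schmidt with norm $\|k\|_{L^2(\bbR;dx)}\|w_R\|_{L^2(\bbR;dx)}$ (alternatively, truncate $w_R$ further to a bounded, compactly supported function, note that multiplication by it maps $H^1(\bbR)$ into $H^1$ of a bounded interval, and invoke the Rellich compactness theorem). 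Second, $w-w_R$ vanishes on $[-R,R]$, so $\sup_{a\in\bbR}\int_a^{a+1}dx\,|(w-w_R)(x)|^2\leq\kappa(R):=\sup_{|b|\geq R-1}\int_b^{b+1}dx\,|w(x)|^2$, and $\kappa(R)\to0$ as $R\to\infty$ by $(iii)$. Applying \eqref{2.8} (with $\varepsilon=1$) to $w-w_R$ in place of $w$, and using the identity $\big\|(T_0+I)^{-1/2}g\big\|_{H^1(\bbR)}=\|g\|_{L^2(\bbR;dx)}$, yields $\big\|(w-w_R)(T_0+I)^{-1/2}\big\|_{\cB(L^2(\bbR;dx))}\leq\sqrt{2\kappa(R)}\to0$. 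Hence $w(T_0+I)^{-1/2}$ is an operator-norm limit of compact operators, and therefore compact, which is $(i)$.

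Finally, for the remark about replacing $T_0^{1/2}$ by a polynomial $P_m\big(T_0^{1/2}\big)$, $m\in\bbN$: the two nontrivial steps carry over verbatim with $(T_0+I)^{-1/2}$ replaced by $(T_0+I)^{-m/2}$, since $(\xi^2+1)^{-m/2}\in L^2(\bbR;d\xi)$ and its inverse Fourier transform is a.e.\ positive (an iterated convolution of $\tfrac12 e^{-|\cdot|}$ when $m$ is even); for general degree-$m$ polynomials both $(T_0+I)^{m/2}\big(|P_m(T_0^{1/2})|+I\big)^{-1}$ and $\big(|P_m(T_0^{1/2})|+I\big)(T_0+I)^{-m/2}$ are bounded (their Fourier symbols have bounded ratios on $[0,\infty)$), so $P_m\big(T_0^{1/2}\big)$-compactness is equivalent to $T_0^{m/2}$-compactness. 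I expect the one genuinely non-formal point to be $(iii)\Rightarrow(i)$: condition $(iii)$ is precisely what upgrades the qualitative boundedness of Theorem \ref{t2.1} to a \emph{quantitative} tail estimate, which is what makes the truncated operators converge in operator norm; the compactness of the truncated pieces and the rest of the bookkeeping are routine.
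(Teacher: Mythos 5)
Your proof is correct. Note, however, that the paper does not actually prove Theorem \ref{t2.2}: it is quoted from Schechter (\cite[Theorem~3.7.5]{Sc81}, \cite[Sects.~15.7, 15.9]{Sc02}) with no argument given. What you have written is therefore a self-contained proof rather than a reconstruction of the paper's, and it is a natural one: the implication $(i)\Rightarrow(ii)$ by factoring off a bounded operator, $(ii)\Rightarrow(iii)$ by testing the compact operator on translates $\psi_a\rightharpoonup 0$ and using positivity of the kernel $\tfrac12 e^{-|\cdot|}$ of $(T_0+I)^{-1}$, and $(iii)\Rightarrow(i)$ by truncation, with the truncated piece Hilbert--Schmidt (this is exactly the paper's Theorem \ref{t2.3} with $s=2$) and the tail controlled in operator norm via the quantitative inequality \eqref{2.8}. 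This is precisely the compactness upgrade of the proof sketch the paper does give for Theorem \ref{t2.1}, so your argument is fully consistent with the toolkit developed in Section \ref{s2}. Your identification of $(iii)\Rightarrow(i)$ as the only genuinely non-formal step is also accurate.

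One small loose end in the polynomial extension: for the direction ``$w$ is $P_m\big(T_0^{1/2}\big)$-compact $\Rightarrow(iii)$'' you invoke a.e.\ positivity of the kernel of $(T_0+I)^{-m/2}$, which your parenthetical only justifies for even $m$ (for odd $m$ it is still true, being a Bessel kernel given by subordination to the heat semigroup, but that is not what you wrote). This is easily repaired without any kernel positivity: take $\varphi(x)=e^{1-x^2}$ (the paper's own trial function in the sketch of Theorem \ref{t2.1}) and set $\psi_a=(T_0+I)^{m/2}\varphi(\cdot-a)\in\cS(\bbR)$; then $\|\psi_a\|_{L^2(\bbR;dx)}$ is independent of $a$, $\psi_a\rightharpoonup 0$, and $(T_0+I)^{-m/2}\psi_a=\varphi(\cdot-a)\geq 1$ on $[a,a+1]$, so
\begin{equation*}
\big\|w(T_0+I)^{-m/2}\psi_a\big\|_{L^2(\bbR;dx)}^2\geq\int_a^{a+1}dx\,|w(x)|^2
\end{equation*}
yields $(iii)$ directly for every $m\in\bbN$. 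With that substitution the argument is complete in all cases.
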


We note that $w \in L^2_{\loc}(\bbR; dx)$ together with condition \eqref{2.12} imply that
$w \in L^2_{\locunif}(\bbR; dx)$ (cf.\ \cite[p.\ 378]{Sc02}).

It is interesting to observe that the if and only if characterizations
\eqref{2.2}--\eqref{2.4} for relative (resp., infinitesimal) form boundedness
mentioned by Simon \cite[p.\ 35]{Si71}, and those in \eqref{2.10}--\eqref{2.12} for relative (form) compactness by Schechter in the first edition of
\cite[Sects.\ 15.7, 15.9]{Sc02}, were both independently published in 1971.

In the context of Theorems \ref{t2.1} and \ref{t2.2} we also refer to \cite{AH97}
for interesting results on necessary and sufficient conditions on relative
boundedness and relative compactness for perturbations of Sturm--Liouville
operators by lower-order differential expressions on a half-line (in addition, see \cite{BH04},
\cite{HM01a}).

We will also use the following result on trace ideals. To fix our notation, we
denote by $f(X)$ the operator of multiplication by the measurable function
$f$ on $\bbR$, and similarly, we denote by $g(P)$ the operator defined by
the spectral theorem for a measurable function $g$ (equivalently, the operator of multiplication by the measurable function $g$ in Fourier space
$L^2(\bbR; dp)$), where $P$ denotes the self-adjoint (momentum) operator defined by
\begin{equation}
P f = - i f', \quad \dom(P) = H^{1}(\bbR).      \lb{2.13}
\end{equation}

\begin{theorem} \lb{t2.3} $($\cite[Theorem\ 4.1]{Si05}.$)$ ${}$ \\
Let $f \in L^s(\bbR;dx)$, $g \in L^s(\bbR; dx))$, $s \in [2,\infty)$. Then
\begin{equation}
f(X) g(P) \in \cB_s \big(L^2(\bbR; dx)\big)     \lb{2.14}
\end{equation}
and
\begin{equation}
\|f(X) g(P) \|_{\cB_s (L^2(\bbR; dx))} \leq (2 \pi)^{-1/s} \|f\|_{L^s(\bbR; dx)}
\|g\|_{L^s(\bbR; dx)}.     \lb{2.15}
\end{equation}
If $s=2$, $f$ and $g$ are both nonzero on a set of positive Lebesgue measure, and
$f(X) g(P) \in \cB_2 \big(L^2(\bbR; dx)\big)$, then
\begin{equation}
f, g \in L^2(\bbR; dx).    \lb{2.16}
\end{equation}
\end{theorem}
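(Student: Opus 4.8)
The plan is to establish the three assertions separately, the main tools being the explicit integral kernel of $f(X)g(P)$ and complex interpolation of the Schatten ideals. For $s=2$, write $\cF$ for the unitary Fourier transform on $L^2(\bbR; dx)$, so that $\cF P \cF^{-1}$ is multiplication by the independent variable (cf.\ \eqref{2.13}); then $g(P) = \cF^{-1} M_g \cF$ acts as convolution with $(2\pi)^{-1/2}\cF^{-1}g$, and $f(X)g(P)$ is the integral operator with kernel $K(x,y) = (2\pi)^{-1/2} f(x)(\cF^{-1}g)(x-y)$, $(x,y)\in\bbR^2$. By Tonelli's theorem and Plancherel's identity,
\begin{equation*}
\int_{\bbR^2} dx\, dy\, |K(x,y)|^2 = (2\pi)^{-1}\|f\|_{L^2(\bbR; dx)}^2 \,\|\cF^{-1}g\|_{L^2(\bbR; dx)}^2 = (2\pi)^{-1}\|f\|_{L^2(\bbR; dx)}^2\,\|g\|_{L^2(\bbR; dx)}^2,
\end{equation*}
so $f(X)g(P)\in\cB_2(L^2(\bbR; dx))$ with $\cB_2$-norm equal to $(2\pi)^{-1/2}\|f\|_{L^2(\bbR; dx)}\|g\|_{L^2(\bbR; dx)}$; this gives \eqref{2.14}, \eqref{2.15} for $s=2$, in fact with equality.

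For $s\in(2,\infty)$ I would argue by complex interpolation. After normalizing $\|f\|_{L^s(\bbR; dx)} = \|g\|_{L^s(\bbR; dx)} = 1$, and using density of bounded compactly supported functions in $L^s(\bbR; dx)$, completeness of $\cB_s$, and the uniformity of the bound, it suffices to treat $f, g$ bounded with compact support. For $z$ in the closed strip $\{z\in\bbC: 0\le\Re z\le 1\}$ put
\begin{equation*}
f_z(x) = |f(x)|^{sz/2}\,\frac{f(x)}{|f(x)|}, \qquad g_z(p) = |g(p)|^{sz/2}\,\frac{g(p)}{|g(p)|}
\end{equation*}
(each read as $0$ where $f$, respectively $g$, vanishes), and $\Phi(z) = f_z(X)\,g_z(P)$. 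For such $f, g$ every $f_z, g_z$ lies in $L^2(\bbR; dx)\cap L^\infty(\bbR; dx)$, so $\Phi(z)\in\cB_2(L^2(\bbR; dx))$ on the entire strip and $z\mapsto(\psi, \Phi(z)\phi)_{L^2(\bbR; dx)}$ is analytic in the open strip, bounded and continuous on its closure, for $\psi, \phi$ in a dense set; thus $\{\Phi(z)\}$ is an admissible analytic family. On $\Re z = 0$ one has $|f_z|, |g_z|\le 1$, so $\|\Phi(z)\|_{\cB(L^2(\bbR; dx))}\le 1$; on $\Re z = 1$ the case $s=2$ gives $\|\Phi(z)\|_{\cB_2}\le(2\pi)^{-1/2}\|f_z\|_{L^2}\|g_z\|_{L^2} = (2\pi)^{-1/2}$, using $\|f_z\|_{L^2}^2 = \|f\|_{L^s}^s = 1 = \|g\|_{L^s}^s = \|g_z\|_{L^2}^2$. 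Since $\theta := 2/s\in(0,1)$ satisfies $f_\theta = f$ and $g_\theta = g$, Stein's interpolation theorem for the Schatten ideals (with $[\cB(L^2(\bbR; dx)), \cB_2(L^2(\bbR; dx))]_\theta = \cB_{2/\theta}(L^2(\bbR; dx))$) yields $f(X)g(P) = \Phi(\theta)\in\cB_s(L^2(\bbR; dx))$ with $\cB_s$-norm $\le 1^{1-\theta}\big((2\pi)^{-1/2}\big)^{\theta} = (2\pi)^{-1/s}$, whence \eqref{2.15} after undoing the normalization.

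For the converse, let $s=2$, assume $f$ and $g$ are each nonzero on a set of positive Lebesgue measure, and suppose $f(X)g(P)\in\cB_2(L^2(\bbR; dx))$. By the first part, $f(X)g(P)$ is the integral operator with kernel $(2\pi)^{-1/2}f(x)(\cF^{-1}g)(x-y)$, now read distributionally since a priori $\cF^{-1}g$ is only a tempered distribution; Hilbert--Schmidt-ness forces this kernel to lie in $L^2(\bbR^2)$. A Fubini argument---localizing $f$, and then $\cF^{-1}g$, to a bounded set of positive Lebesgue measure on which it is bounded away from zero, which is possible precisely by the positivity hypotheses---then yields first $\cF^{-1}g\in L^2(\bbR)$, hence $g\in L^2(\bbR; dx)$ by Plancherel, and afterwards $f\in L^2(\bbR; dx)$; this is \eqref{2.16}.

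The step I expect to cost the most effort is the rigorous handling of the analytic family in the second paragraph: one must verify that $z\mapsto\Phi(z)$ is an admissible analytic family of bounded operators obeying the stated Schatten-norm bounds on the two boundary lines. This is exactly why I first reduce to bounded compactly supported $f, g$---for these every $\Phi(z)$ is Hilbert--Schmidt and both the weak analyticity and the boundary estimates are transparent---and only then pass to general $f, g\in L^s(\bbR; dx)$ by density. A secondary technical point, relevant to both the $s=2$ identity and its converse, is that for $f$ or $g$ not in $L^2(\bbR; dx)$ neither $f(X)$ nor $g(P)$ need be bounded, so $f(X)g(P)$ must be understood throughout as the closure of its restriction to a natural dense domain (e.g., the $h\in L^2(\bbR; dx)$ with smooth, compactly supported Fourier transforms), and one has to check that on this domain it agrees with the integral operator having the kernel displayed above.
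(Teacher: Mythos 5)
Your argument is correct and is essentially the standard Kato--Seiler--Simon proof: the explicit Hilbert--Schmidt kernel computation for $s=2$ (with equality of norms), Stein complex interpolation between $\cB(L^2)$ and $\cB_2$ for $s\in(2,\infty)$, and the kernel/Fubini localization argument for the converse. The paper itself offers no proof but simply cites \cite[Theorem\ 4.1]{Si05}, and your proof coincides with the one given there.
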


Given these preparations, we introduce the following convenient assumption:

\begin{hypothesis} \lb{h2.4}
In addition to the assumptions in Hypothesis \ref{h1.1} suppose that the
form domain of $T$ is given by
\begin{equation}
\dom\big(T^{1/2}\big) = \dom\big(T_0^{1/2}\big) = H^{1}(\bbR).    \lb{2.17}
\end{equation}
\end{hypothesis}

Assuming for some positive constants $c$ and $C$ that
\begin{equation}
0 < c \leq p \leq C \, \text{ a.e.\ on $\bbR$,}    \lb{2.17a}
\end{equation}
an application of Theorem \ref{t2.1}\,$(i),(ii)$ shows that \eqref{2.17} holds if
$q\in L^1_{\loc}(\bbR; dx)$ satisfies
\begin{equation}
q \in L^1_{\locunif}(\bbR;dx).    \lb{2.17b}
\end{equation}
Indeed, since by hypothesis, $T$ is essentially self-adjoint on $\dom(T_{\min})$,
$T\geq \varepsilon I$ for some $\varepsilon > 0$, and $\dom\big(T^{1/2}\big) = H^1(\bbR)$, the
sesquilinear form $\gQ_{T}$ associated with $T$ is of the form
\begin{align}
\begin{split}
& \gQ_T (f,g) = \int_{\bbR} dx \, p(x) \ol{f'(x)} g'(x) + \int_{\bbR} dx \, q(x) \ol{f(x)} g(x),  \\
& f, g \in \dom(\gQ_T) = \dom\big(T^{1/2}\big) = \dom\big(T_0^{1/2}\big) = H^1(\bbR).
\end{split}
\end{align}
Hence, by Theorem \ref{t2.1}\,$(i), (ii)$, this is equivalent to \eqref{2.17b} keeping in mind that $q$ is such that \eqref{1.11} holds.

Our first result then reads as follows:

\begin{theorem} \lb{t2.5}
Assume Hypothesis \ref{h2.4}. \\
$(i)$ Then
\begin{align}
& |r|^{1/2}T^{-1/2} \in \cB\big(L^2(\bbR; dx)\big)    \lb{2.18} \\
\intertext{if and only if }
& r \in L^1_{\locunif}(\bbR; dx).    \lb{2.19}
\end{align}
In particular, if \eqref{2.19} holds, introducing
\begin{equation}
\ul{T^{-1/2} r T^{-1/2}} = \big[|r|^{1/2} T^{-1/2}\big]^* \sgn (r)
\big[|r|^{1/2} T^{-1/2}\big],      \lb{2.19a}
\end{equation}
one concludes that
\begin{equation}
\ul{T^{-1/2} r T^{-1/2}} \in \cB\big(L^2(\bbR; dx)\big).    \lb{2.20}
\end{equation}
$(ii)$ Let $r_0 \in \bbR$. Then
\begin{align}
&  |r - r_0|^{1/2}T^{-1/2} \in \cB_{\infty}\big(L^2(\bbR; dx)\big)    \lb{2.21} \\
\intertext{if and only if }
& \lim_{|a|\to\infty} \bigg(\int_a^{a+1} dx \, |r(x) - r_0|\bigg) = 0.    \lb{2.22}
\end{align}
In particular, if \eqref{2.22} holds, introducing
\begin{equation}
\ul{T^{-1/2} (r - r_0) T^{-1/2}} = \big[|r - r_0|^{1/2} T^{-1/2}\big]^* \sgn (r - r_0)
\big[|r - r_0|^{1/2} T^{-1/2}\big],      \lb{2.22a}
\end{equation}
one concludes that
\begin{equation}
\ul{T^{-1/2} (r - r_0) T^{-1/2}} \in \cB_{\infty}\big(L^2(\bbR; dx)\big).    \lb{2.23}
\end{equation}
$(iii)$ Let $r_0 \in \bbR$. Then
\begin{align}
&  |r - r_0|^{1/2}T^{-1/2} \in \cB_2 \big(L^2(\bbR; dx)\big)    \lb{2.24} \\
\intertext{if and only if }
& \int_{\bbR} dx \, |r(x) - r_0| < \infty.    \lb{2.25}
\end{align}
In particular, if \eqref{2.25} holds, then
\begin{equation}
\ul{T^{-1/2} (r - r_0) T^{-1/2}} \in \cB_1 \big(L^2(\bbR; dx)\big).    \lb{2.26}
\end{equation}
\end{theorem}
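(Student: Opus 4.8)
The plan is to exploit the fact that the operator $T^{-1/2}$ can be compared with $T_0^{-1/2}$ (and hence with functions of the momentum operator $P$), which is exactly where Theorems \ref{t2.1}--\ref{t2.3} become applicable. Concretely, under Hypothesis \ref{h2.4} the form domains of $T$ and $T_0$ coincide and $T \geq \varepsilon I$; by the Heinz--Kato inequality (or, equivalently, by comparing the associated quadratic forms on $H^1(\bbR)$) the bounded operator $T_0^{1/2} T^{-1/2}$ extends to an element of $\cB(L^2(\bbR;dx))$ with bounded inverse $T^{1/2} T_0^{-1/2}$, since $\varepsilon \|T_0^{1/2} f\|^2 \leq \|T^{1/2} f\|^2$ is not quite automatic but follows once one also knows a two-sided form comparison $c_1 \gQ_{T_0} + c_2 \leq \gQ_T \leq c_3 \gQ_{T_0} + c_4$, which holds because $p$ is bounded above and below by \eqref{2.17a} and $q \in L^1_{\locunif}$ is infinitesimally form bounded with respect to $T_0$ by Theorem \ref{t2.1}. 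Thus $T^{-1/2} = T_0^{-1/2} \cdot \big(T_0^{1/2} T^{-1/2}\big)$ with the second factor in $\cB(L^2(\bbR;dx))$, and one reduces every claim about $|r-r_0|^{1/2} T^{-1/2}$ to the corresponding claim about $|r-r_0|^{1/2} T_0^{-1/2}$.

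For part $(i)$ I would argue as follows. If $r \in L^1_{\locunif}(\bbR;dx)$ then $|r|^{1/2} \in L^2_{\locunif}(\bbR;dx)$, so by Theorem \ref{t2.1}\,$(i),(ii)$ the multiplication operator $|r|^{1/2}$ is $T_0^{1/2}$-bounded, i.e., $|r|^{1/2} T_0^{-1/2} \in \cB(L^2(\bbR;dx))$, and composing with the bounded operator $T_0^{1/2} T^{-1/2}$ yields \eqref{2.18}. Conversely, if $|r|^{1/2} T^{-1/2} \in \cB(L^2(\bbR;dx))$, then $|r|^{1/2} T^{-1/2} \big(T^{1/2} T_0^{-1/2}\big) = |r|^{1/2} T_0^{-1/2}$ is bounded, hence $|r|^{1/2}$ is $T_0^{1/2}$-bounded, so again by Theorem \ref{t2.1}\,$(ii)$ we get $|r|^{1/2} \in L^2_{\locunif}$, i.e., \eqref{2.19}. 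Once \eqref{2.18} is established, the polar-type decomposition \eqref{2.19a} exhibits $\ul{T^{-1/2} r T^{-1/2}}$ as a product of a bounded operator, the bounded self-adjoint $\sgn(r)$, and another bounded operator, giving \eqref{2.20}; self-adjointness is manifest since $\sgn(r)^* = \sgn(r)$ and the two outer factors are adjoints of each other.

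Parts $(ii)$ and $(iii)$ follow the identical scheme with the upgraded input theorems. For $(ii)$: the hypothesis \eqref{2.22} says precisely $\lim_{|a|\to\infty} \int_a^{a+1} dx\, |r(x)-r_0|^{1/2}{}^2 = 0$ for the function $|r-r_0|^{1/2}$, which by Theorem \ref{t2.2}\,$(iii)\Leftrightarrow(i)$ is equivalent to $|r-r_0|^{1/2}$ being $T_0^{1/2}$-compact, i.e., $|r-r_0|^{1/2} T_0^{-1/2} \in \cB_\infty(L^2(\bbR;dx))$; composing with the bounded $T_0^{1/2} T^{-1/2}$ preserves compactness, giving \eqref{2.21}, and the converse again multiplies on the right by $T^{1/2} T_0^{-1/2}$. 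The ideal property $\cB \cdot \cB_\infty \cdot \cB \subset \cB_\infty$ applied to \eqref{2.22a} then yields \eqref{2.23}. For $(iii)$: if $r-r_0 \in L^1(\bbR;dx)$ then $|r-r_0|^{1/2} \in L^2(\bbR;dx)$, so writing $T_0^{-1/2} = (P^2+1)^{-1/2}(P^2+1)^{1/2} T_0^{-1/2}$ with the last two factors bounded, Theorem \ref{t2.3} (with $s=2$, $f = |r-r_0|^{1/2}$, $g(p) = (p^2+1)^{-1/2} \in L^2(\bbR;dp)$) gives $|r-r_0|^{1/2} T_0^{-1/2} \in \cB_2(L^2(\bbR;dx))$, hence \eqref{2.24} after composition with $T_0^{1/2} T^{-1/2} \in \cB$; the converse uses the last assertion of Theorem \ref{t2.3}. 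Finally \eqref{2.26} follows since $\cB_2 \cdot \cB_2 \subset \cB_1$ applied to the decomposition analogous to \eqref{2.22a}.

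The main obstacle, and the one step I would be most careful about, is establishing the two-sided comparability of $T^{-1/2}$ and $T_0^{-1/2}$ — specifically that $T^{1/2} T_0^{-1/2}$ is bounded (with bounded inverse $T_0^{1/2} T^{-1/2}$). Boundedness of $\gQ_T$ relative to $\gQ_{T_0} + I$ on $H^1(\bbR)$ is easy from \eqref{2.17a} and the infinitesimal form-boundedness of $q$; the reverse bound $\gQ_{T_0} \lesssim \gQ_T + I$ is the subtle point, since it requires absorbing the (possibly sign-indefinite) contribution of $q$ back into the gradient term, which works because $q$ is infinitesimally form bounded and because $T \geq \varepsilon I$ provides the needed lower bound. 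Once this quadratic-form equivalence is in hand, the Heinz--Kato inequality upgrades it to the operator statement, and the rest of the proof is the routine bookkeeping with trace ideals sketched above.
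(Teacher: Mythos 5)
Your proposal follows essentially the same route as the paper: use the coincidence of form domains \eqref{2.17} to compare $T^{-1/2}$ with the free resolvent power, factor $|r-r_0|^{1/2}T^{-1/2}$ accordingly, and feed the free-operator factor into Theorems \ref{t2.1}, \ref{t2.2}, and \ref{t2.3}, finishing with the ideal properties $\cB\cdot\cB_\infty\cdot\cB\subset\cB_\infty$ and $\cB_2\cdot\cB_2\subset\cB_1$. Two corrections, though. First, you must work with $(T_0+I)^{\pm 1/2}$ rather than $T_0^{\pm 1/2}$: since $0\in\sigma(T_0)$, the operator $T_0^{-1/2}$ is unbounded and $T_0^{1/2}T^{-1/2}$ is injective with dense, nonclosed range, so your claim that it is boundedly invertible with inverse $T^{1/2}T_0^{-1/2}$ is false as stated (take $T=T_0+I$ to see that $\|T^{1/2}g\|^2=\|g'\|^2+\|g\|^2$ is not dominated by $\|g'\|^2$ alone); with $(T_0+I)^{\pm1/2}$ every step you wrote goes through, and indeed relative boundedness of $|r|^{1/2}$ with respect to $T_0^{1/2}$ in the sense of \eqref{2.3a} is exactly boundedness of $|r|^{1/2}(T_0+I)^{-1/2}$, not of $|r|^{1/2}T_0^{-1/2}$. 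Second, the step you single out as the main obstacle is not one: Hypothesis \ref{h2.4} \emph{assumes} $\dom\big(T^{1/2}\big)=\dom\big(T_0^{1/2}\big)=H^1(\bbR)$, so both $(T_0+I)^{1/2}T^{-1/2}$ and $T^{1/2}(T_0+I)^{-1/2}$ are everywhere defined, closed, and hence bounded by the closed graph theorem --- no Heinz--Kato inequality and no two-sided quadratic-form comparison need be established (that is precisely how the paper argues in \eqref{2.27}--\eqref{2.28}). With these two repairs your argument coincides with the paper's proof.
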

\begin{proof}
$(i)$ By hypothesis \eqref{2.17} and the closed graph theorem one concludes that
\begin{equation}
\big[(T_0 + I)^{1/2} T^{-1/2}\big] \in \cB\big(L^2(\bbR; dx)\big).      \lb{2.27}
\end{equation}
\big(and analogously, $ \big[(T_0 + I)^{1/2} T^{-1/2}\big]^{-1}
= T^{1/2} (T_0 + I)^{-1/2} \in \cB\big(L^2(\bbR; dx)\big)$\big).
The equivalence of \eqref{2.18} and \eqref{2.19} then follows from
\eqref{2.2} and \eqref{2.3} and the fact that
\begin{equation}
|r|^{1/2} T^{-1/2} = \big[|r|^{1/2} (T_0 + I)^{-1/2}\big]
\big[(T_0 + I)^{1/2} T^{-1/2}\big].     \lb{2.28}
\end{equation}
The inclusion \eqref{2.20} immediately follows from \eqref{2.18} and \eqref{2.19a}. \\
$(ii)$ The equivalence of \eqref{2.21} and \eqref{2.22} follows from
\eqref{2.10} and \eqref{2.12}. The inclusion \eqref{2.23} then follows from
\eqref{2.22a}, \eqref{2.27}, and \eqref{2.28} with $r$ replaced by $r - r_0$. \\
$(iii)$ The equivalence of \eqref{2.24} and \eqref{2.25} follows from
\eqref{2.15} and \eqref{2.16}, employing again \eqref{2.27} and the fact that
$(|p|^2 + 1)^{-1/2} \in L^2(\bbR; dp)$. The relation \eqref{2.26} once more follows from
\eqref{2.22a}, \eqref{2.27}, and \eqref{2.28} with $r$ replaced by $r - r_0$, and the
fact that $S \in \cB_1 (\cH)$ if and only if $|S| \in \cB_1 (\cH)$ and hence if and only if
$|S|^{1/2} \in \cB_2 (\cH)$.
\end{proof}

In the following we use the obvious notation for subsets of $\cM \subset \bbR$
and constants $c \in \bbR$:
\begin{equation}
c \, \cM = \{c \, x \in \bbR \,|\, x \in \cM\}.    \lb{2.30}
\end{equation}

\begin{corollary} \lb{c2.6}
Assume Hypothesis \ref{h2.4}. \\
$(i)$ If \eqref{2.22} holds for some $r_0 \in \bbR$, then
\begin{equation}
\sigma_{\rm ess} \big(\ul{T^{-1/2} r T^{-1/2}}\big)
= \begin{cases} r_0 \sigma_{\rm ess} \big(T^{-1}\big),
& r_0 \in \bbR \backslash \{0\},    \\
\{0\}, & r_0 = 0.
\end{cases}      \lb{2.31}
\end{equation}
$(ii)$ If \eqref{2.25} holds for some $r_0 \in \bbR$, then
\begin{equation}
\sigma_{\rm ac} \big(\ul{T^{-1/2} r T^{-1/2}}\big)
= \begin{cases} r_0 \sigma_{\rm ac} \big(T^{-1}\big),
& r_0 \in \bbR \backslash \{0\},    \\
\emptyset, & r_0 = 0.
\end{cases}      \lb{2.32}
\end{equation}
\end{corollary}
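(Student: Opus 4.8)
The plan is to derive both assertions from Theorem~\ref{t2.5} by means of the elementary perturbative identity
\[
\ul{T^{-1/2} r T^{-1/2}} = r_0\, T^{-1} + \ul{T^{-1/2} (r - r_0) T^{-1/2}},
\]
which I claim holds as an identity of bounded self-adjoint operators in $L^2(\bbR; dx)$ whenever $r - r_0 \in L^1_{\locunif}(\bbR; dx)$ --- in particular, under hypothesis \eqref{2.22} (there the vanishing limit forces the corresponding supremum to be finite) and under hypothesis \eqref{2.25} (there $r - r_0 \in L^1(\bbR; dx) \subset L^1_{\locunif}(\bbR; dx)$, and \eqref{2.25} moreover implies \eqref{2.22}, the quantity $\int_a^{a+1} dx \, |r(x) - r_0|$ being a tail integral of an $L^1$-function). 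Note that $\ul{T^{-1/2} r T^{-1/2}}$ is a bounded self-adjoint operator by Theorem~\ref{t2.5}\,$(i)$ and \eqref{2.19a}, so the notions $\sigma_{\rm ess}$, $\sigma_{\rm ac}$ are the usual ones.

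First I would establish this identity. For $f, g \in L^2(\bbR; dx)$ one has $T^{-1/2} f, T^{-1/2} g \in \dom\big(T^{1/2}\big) = H^1(\bbR)$, so since $r - r_0 \in L^1_{\locunif}(\bbR; dx)$, Theorem~\ref{t2.1}\,$(i),(ii)$, applied with $w = |r - r_0|^{1/2}$, shows that $|r - r_0|^{1/2} T^{-1/2} f$ and $|r - r_0|^{1/2} T^{-1/2} g$ lie in $L^2(\bbR; dx)$; consequently the three products $(r - r_0) \ol{(T^{-1/2} f)} (T^{-1/2} g)$, $r_0 \ol{(T^{-1/2} f)} (T^{-1/2} g)$, and $r \ol{(T^{-1/2} f)} (T^{-1/2} g)$ all lie in $L^1(\bbR; dx)$. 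Unraveling the definitions \eqref{2.19a} and \eqref{2.22a} and using the self-adjointness of $T^{-1/2}$,
\begin{align*}
\big(f, \ul{T^{-1/2} r T^{-1/2}}\, g\big)_{L^2(\bbR; dx)}
&= \int_{\bbR} dx \, r(x) \, \ol{(T^{-1/2} f)(x)} \, (T^{-1/2} g)(x) \\
&= r_0 \big(f, T^{-1} g\big)_{L^2(\bbR; dx)} + \big(f, \ul{T^{-1/2} (r - r_0) T^{-1/2}}\, g\big)_{L^2(\bbR; dx)},
\end{align*}
and since $f, g$ are arbitrary the operator identity follows.

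Granting the identity, the rest is standard. For part $(i)$, hypothesis \eqref{2.22} together with Theorem~\ref{t2.5}\,$(ii)$ (see \eqref{2.23}) gives $\ul{T^{-1/2} (r - r_0) T^{-1/2}} \in \cB_{\infty}\big(L^2(\bbR; dx)\big)$, so the stability of the essential spectrum under compact perturbations (Weyl's theorem) yields $\sigma_{\rm ess}\big(\ul{T^{-1/2} r T^{-1/2}}\big) = \sigma_{\rm ess}\big(r_0 T^{-1}\big)$; the latter equals $r_0\, \sigma_{\rm ess}\big(T^{-1}\big)$ when $r_0 \neq 0$ (scaling property of the spectrum) and equals $\{0\}$ when $r_0 = 0$, since then $r_0 T^{-1} = 0$ while $L^2(\bbR; dx)$ is infinite-dimensional. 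For part $(ii)$, hypothesis \eqref{2.25} together with Theorem~\ref{t2.5}\,$(iii)$ (see \eqref{2.26}) gives $\ul{T^{-1/2} (r - r_0) T^{-1/2}} \in \cB_1\big(L^2(\bbR; dx)\big)$, so the Kato--Rosenblum theorem (stability of the absolutely continuous spectrum under trace class perturbations) yields $\sigma_{\rm ac}\big(\ul{T^{-1/2} r T^{-1/2}}\big) = \sigma_{\rm ac}\big(r_0 T^{-1}\big)$, which is $r_0\, \sigma_{\rm ac}\big(T^{-1}\big)$ when $r_0 \neq 0$ and $\emptyset$ when $r_0 = 0$ (the zero operator has no absolutely continuous spectrum).

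I expect the only genuinely delicate point to be the justification of the operator identity --- specifically, verifying the $L^1$-integrability of the three products above so that the splitting $r = r_0 + (r - r_0)$ may legitimately be performed inside the quadratic form, which is exactly where Theorem~\ref{t2.1} enters. Everything else reduces to citing the two perturbation-theoretic stability results (Weyl and Kato--Rosenblum) in combination with Theorem~\ref{t2.5}, plus the two trivial remarks concerning scaling of the spectrum and the zero operator in the case $r_0 = 0$.
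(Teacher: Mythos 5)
Your proposal is correct and follows essentially the same route as the paper: the decomposition $\ul{T^{-1/2} r T^{-1/2}} = r_0 T^{-1} + \ul{T^{-1/2} (r - r_0) T^{-1/2}}$ (the paper's \eqref{2.33}), then Weyl's theorem via \eqref{2.23} for the essential spectrum and Kato--Rosenblum via \eqref{2.26} for the absolutely continuous spectrum, with the $r_0=0$ case disposed of by compactness of the operator itself. Your explicit quadratic-form justification of the splitting identity is a detail the paper leaves implicit, but it does not alter the argument.
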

\begin{proof}
For $r_0 \in \bbR \backslash \{0\} $ it suffices to use the decomposition
\begin{equation}
\ul{T^{-1/2} r T^{-1/2}} = \ul{T^{-1/2} [r_0 + (r - r_0)]T^{-1/2}}
= r_0 T^{-1} + \ul{T^{-1/2} (r - r_0) T^{-1/2}}     \lb{2.33}
\end{equation}
and employ \eqref{2.23} together with Weyl's theorem (cf., e.g.,
\cite[Sect.\ IX.2]{EE89}, \cite[Sect.\ XIII.4]{RS78}, \cite[Sect.\ 9.2]{We80}) to obtain
\eqref{2.31}, and combine \eqref{2.26} and the Kato--Rosenblum theorem
(cf., e.g., \cite[Sect.\ X.3]{Ka80}, \cite[Sect.\ XI.3]{RS79}, \cite[Sect.\ 11.1]{We80})
to obtain \eqref{2.32}.

In the case $r_0 = 0$ relation \eqref{2.31} holds since $\ul{T^{-1/2} r T^{-1/2}}
\in \cB_{\infty}\big(L^2(\bbR; dx)\big)$ and $L^2(\bbR; dx)$ is infinite-dimensional.
By the same argument one obtains \eqref{2.32} for $r_0 = 0$.
\end{proof}

In connection with \eqref{2.31} we also recall that by the spectral mapping theorem 
for self-adjoint operators $A$ in $\cH$,
\begin{equation}
0 \neq z \in \sigma_{\rm ess} \big((A - z_0 I_{\cH})^{-1}\big), \; z_0 \in \rho(A),  \,
\text{ if and only if } \, z^{-1} + z_0 \in \sigma_{\rm ess} (A)     \lb{2.34}
\end{equation}
(cf., e.g., \cite[Sect.\ XIII.4]{RS78}). Finally, we mention that there exists
a large body of results on determining essential and absolutely continuous
spectra for Sturm--Liouville-type operators $T$ associated with the
differential expressions of the type $\tau = - \f{d}{dx} p(x) \f{d}{dx} + q(x)$, $x \in \bbR$.
We refer, for instance, to \cite[XIII.7]{DS88}, \cite[Chs.\ 2, 4]{MP81},
\cite[Sect.\ 24]{Na68}, and the literature cited therein.

\begin{remark} \lb{r2.7}
While it is well-known that for $T$ densely defined and closed in $\cH$,
\begin{align}
\begin{split}
& \text{$T$ is bounded (resp., compact, Hilbert--Schmidt)}    \\
& \quad \text{if and only if $T^* T$ is bounded (resp., compact, trace class),}
\end{split}
\end{align}
the following example due to G.\ Teschl \cite{Te10} shows that if $S$ is
bounded and self-adjoint in $\cH$ with spectrum $\sigma (S) = \{-1, 1\}$ then
\begin{equation}
\text{$T$ bounded is {\bf not} equivalent to $\ol{T^* S T}$ bounded}
\end{equation}
assuming $T^* S T$ to be densely defined in $\cH$ (and hence closable
in $\cH$, since $T^* ST$ is symmetric). Indeed, considering
\begin{equation}
T = \begin{pmatrix} A & 0 \\ 0 & A^{-1} \end{pmatrix}, \quad A= A^*, \quad
A \geq I_{\cH}, \quad S = \begin{pmatrix} 0 & I_{\cH} \\ I_{\cH} & 0 \end{pmatrix},
\end{equation}
then
\begin{equation}
\ol{T^* S T} = S,
\end{equation}
and hence $\ol{T^*ST}$ is bounded, but $T$ is unbounded if $A$ is chosen to be unbounded.

Thus one cannot assert on abstract grounds that
\begin{equation}
\ul{T^{-1/2} r T^{-1/2}} = \big[|r|^{1/2} T^{-1/2}\big]^* \sgn(r) |r|^{1/2} T^{-1/2}
\end{equation}
is bounded if and only if $|r|^{1/2} T^{-1/2}$ is. In fact, this is utterly wrong as we shall
discuss in the following Section \ref{s3}. Indeed, focusing directly on $|r|^{1/2} T^{-1/2}$ instead
of $\ul{T^{-1/2} r T^{-1/2}}$ ignores crucial oscillations of $r$ that permit one to considerably
enlarge the class of admissible weights $r$. In particular, thus far we relied on estimates of the
type
\begin{equation}
\big\||q|^{1/2} f \big\|_{L^2(\bbR; dx)}^2 \leq C
\Big[\big\|T_0^{1/2} f\big\|_{L^2(\bbR; dx)}^2 + \|f\|_{L^2(\bbR; dx)}^2\Big],
\quad f \in H^1(\bbR),
\end{equation}
equivalently,
\begin{equation}
\int_{\bbR} dx \, |q(x)| |f(x)|^2 \leq
\big\|\big[T_0 + I \big]^{1/2} f\big\|^2_{L^2(\bbR; dx)},
\quad f \in H^1(\bbR).
\end{equation}
Consequently, we ignored all oscillations of $q$ (and hence, $r$). Instead, we should focus on
estimating
\begin{equation}
\bigg| \int_{\bbR} dx \, q(x) |f(x)|^2 \bigg| \leq
\big\|\big[T_0 + I \big]^{1/2} f\big\|_{L^2(\bbR; dx)}^2, \quad f \in H^1(\bbR),
\end{equation}
and this will be the focus of the next Section \ref{s3}.
\end{remark}

\section{Distributional Coefficients} \lb{s3}

In this section we extend our previous considerations where $q, r \in L^1_{\locunif}(\bbR; dx)$,
to the case where $q$ and $r$ are permitted to lie in a certain class of distributions. The
extension to distributional coefficients will be facilitated by employing supersymmetric methods
and an underlying Miura transformation. This approach permits one to relate spectral theory for
Schr\"odinger operators factorized into a product of first-order differential operators with that
of an associated Dirac-type operator.

We start with some background (cf., e.g., \cite[Chs.\ 4--6]{Gr09}, \cite[Chs.\ 2, 3, 11]{MS09},
\cite[Ch.\ 3]{Mc00}) and fix our notation in
connection with Sobolev spaces. Introducing
\begin{equation}
L^2_s(\bbR) = L^2 \Big(\bbR; \big(1 + |p|^2\big)^s dp\Big), \quad s \in \bbR,
\end{equation}
and identifying,
\begin{equation}
L^2_0(\bbR) = L^2 (\bbR; dp) = \big(L^2 (\bbR; dp)\big)^* = \big(L^2_0(\bbR)\big)^*,
\end{equation}
one gets the chain of Hilbert spaces with respect to the pivot space
$L^2_0(\bbR) = L^2 (\bbR; dp)$,
\begin{equation}
L^2_s(\bbR) \subset L^2 (\bbR; dp) \subset L^2_{-s} (\bbR) = \big(L^2_s (\bbR)\big)^*, \quad s > 0.
\end{equation}
Next, we introduce the maximally defined operator $G_0$ of multiplication by the function
$\big(1 + |\cdot|^2\big)^{1/2}$ in $L^2 (\bbR; dp)$,
\begin{align}
\begin{split}
& (G_0 f)(p) = \big(1 + |p|^2\big)^{1/2} f(p), \\
& \, f \in \dom(G_0) = \Big\{ g \in L^2 (\bbR; dp) \, \Big| \,
\big(1 + |\cdot|^2\big)^{1/2} g \in L^2 (\bbR; dp)\Big\}.
\end{split}
\end{align}
The operator $G_0$ extends to an operator defined on the entire scale $L^2_s(\bbR)$,
$s \in \bbR$, denoted by $\wti G_0$, such that
\begin{equation}
\wti G_0 : L^2_s(\bbR) \to L^2_{s-1}(\bbR), \quad
\big(\wti G_0\big)^{-1} : L^2_s(\bbR) \to L^2_{s+1}(\bbR), \, \text{ bijectively, } \, \; s \in \bbR.
\end{equation}
In particular, while
\begin{equation}
I: L^2(\bbR; dp) \to \big(L^2(\bbR; dp)\big)^* = L^2(\bbR; dp)
\end{equation}
represents the standard identification operator between $L^2_0(\bbR) = L^2 (\bbR; dp)$ and its adjoint space, $\big(L^2 (\bbR; dp)\big)^* = \big(L^2_0(\bbR)\big)^*$, via Riesz's lemma, we emphasize that we will not identify $\big(L^2_s (\bbR)\big)^*$ with $L^2_s(\bbR)$ when $s>0$. In fact, it is the operator $\wti G_0^2$ that provides a unitary map
\begin{equation}
\wti G_0^2 : L^2_s(\bbR) \to L^2_{s-2}(\bbR), \quad s \in \bbR.
\end{equation}
In particular,
\begin{equation}
\wti G_0^2 : L^2_1(\bbR) \to L^2_{-1}(\bbR) = \big(L^2_1(\bbR)\big)^* \, \text{ is a unitary map},  \lb{3.10}
\end{equation}
and we refer to \eqref{C.37} for an abstract analog of this fact. 

Denoting the Fourier transform on $L^2(\bbR; dp)$ by $\cF$, and then extended to the entire scale
$L^2_s(\bbR)$, $s \in \bbR$, more generally, to $\cS'(\bbR)$ by $\wti \cF$ (with
$\wti \cF : \cS'(\bbR) \to \cS'(\bbR)$ a homeomorphism), one obtains the scale of Sobolev
spaces via
\begin{equation}
H^s(\bbR) = \wti \cF L^2_s(\bbR), \quad s \in \bbR, \quad L^2(\bbR; dx) = \cF L^2(\bbR; dp),
\lb{3.11}
\end{equation}
and hence,
\begin{align}
\cF G_0 \cF^{-1} &= (T_0 + I)^{1/2} : H^1(\bbR) \to L^2(\bbR; dx), \, \text{ bijectively, }     \lb{3.12} \\
\wti \cF \wti G_0 \wti \cF^{-1} &= \big(\wti T_0 + \wti I \big)^{1/2} : H^s(\bbR) \to H^{s-1}(\bbR),
\, \text{ bijectively, } \, \; s \in \bbR,    \lb{3.13} \\
\wti \cF \big(\wti G_0\big)^{-1} \wti \cF^{-1} &= \big(\wti T_0 + \wti I \big)^{-1/2} : H^s(\bbR)
\to H^{s+1}(\bbR), \, \text{ bijectively, } \, \;  s \in \bbR.    \lb{3.14}
\end{align}

We recall that $T_0$ was defined as
\begin{equation}
T_0 = - d^2/dx^2, \quad \dom(T_0) = H^2(\bbR),    \lb{3.14a}
\end{equation}
in \eqref{2.1}, but now the extension $\wti T_0$ of $T_0$ is defined on the entire Sobolev
scale according to \eqref{3.13},
\begin{equation}
\big(\wti T_0 + \wti I\big) : H^s(\bbR) \to H^{s-2}(\bbR) \, \text{ is a unitary map, } \; s \in \bbR,
\lb{3.14b}
\end{equation}
and the special case $s=1$ again corresponds to \eqref{C.37},
\begin{equation}
\big(\wti T_0 + \wti I\big) : H^1(\bbR) \to H^{-1}(\bbR) = \big(H^1(\bbR)\big)^* \,
\text{ is a unitary map.}    \lb{3.14c}
\end{equation}
In addition, we note that
\begin{align}
& H^0(\bbR) = L^2(\bbR; dx), \quad \big(H^s(\bbR)\big)^* = H^{-s}(\bbR), \quad s \in \bbR,  \\
\begin{split}
& \, \cS(\bbR) \subset H^s(\bbR) \subset H^{s'}(\bbR) \subset L^2(\bbR; dx) \subset
H^{-s'}(\bbR) \subset H^{-s}(\bbR) \subset \cS'(\bbR),     \\
& \hspace*{9.07cm} s > s' > 0.
\end{split}
\end{align}
Moreover, we recall that $H^s(\bbR)$ is conveniently and alternatively introduced as the
completion of $C_0^{\infty}(\bbR)$ with respect to the norm $\|\cdot\|_s$,
\begin{equation}
H^s(\bbR) = \ol{C_0^{\infty}(\bbR)}^{\|\cdot\|_{s}}, \quad s \in \bbR,
\end{equation}
where for $\psi \in C_0^{\infty}(\bbR)$ and $s\in\bbR$,
\begin{equation}
\|\psi\|_{s} = \bigg(\int_{\bbR} d\xi \, \big(1+\abs{\xi}^{2s}\big)  |\hatt \psi(\xi)|^2)\bigg)^{1/2}, \quad
\hatt \psi(\xi) = (2 \pi)^{-1/2} \int_{\bbR} dx \, e^{-i \xi x} \psi(x).
\end{equation}
Equivalently,
\begin{equation}
H^{s}(\bbR) = \bigg\{u\in \cS^\prime(\bbR)\,\bigg|\,
\norm{u}_{H^{s}(\bbR)}^2 = \int_{\bbR^n}d\xi \, \big(1+\abs{\xi}^{2s}\big)
|\hatt u(\xi)|^2 < \infty \bigg\}, \quad s \in \bbR.
\end{equation}
Similarly,
\begin{align}
\begin{split}
H_{\loc}^s(\bbR) &= \big\{u \in \cD^{\prime}(\bbR) \, \big| \, \|\psi \, u\|_{H^s(\bbR)} < \infty \text{ for all }
\psi \in C_0^{\infty}(\bbR)\big\}   \\
&= \big\{u \in \cD^{\prime}(\bbR) \, \big| \, \|\eta( \cdot - a) \, u\|_{H^s(\bbR)} < \infty
\text{ for all $a \in \bbR$}\big\}, \quad s \in \bbR
\end{split}
\end{align}
(cf.\ \cite[p.\ 140]{Gr09}), and
\begin{equation}
H_{\locunif}^s(\bbR) = \Big\{u \in H_{\loc}^s(\bbR) \,\Big| \,
\sup_{a\in\bbR} \| \eta( \cdot - a) u\|_{H^s(\bbR)} < \infty \Big\}, \quad
s \in \bbR,
\end{equation}
with $\eta$ defined in \eqref{2.1b}.

Moreover, as proven in \cite[Sect.\ 2]{DM10} (cf.\ also \cite{KPST05}, \cite{MM08}, \cite{MM09})  elements $q \in H^{-1}_{\loc}(\bbR) \subset \cD'(\bbR)$ can be represented by
\begin{equation}
q = q_2' \, \text{ for some } \, q_2 \in L^2_{\loc}(\bbR; dx).
\end{equation}
Similarly, if $q \in H^{s-1}(\bbR)$ for some $s\geq 0$, \cite[Lemma\ 2.1]{KPST05} proves the
representation
\begin{equation}
q= v_{\infty} + v_s^{\; \prime} \, \text{ for some } v_{\infty} \in H^{\infty}(\bbR), \;
v_s \in H^s(\bbR),
\end{equation}
where
\begin{equation}
H^{\infty}(\bbR) = \bigcap_{t \geq 0} H^t(\bbR) \subset C^{\infty}(\bbR).
\end{equation}
In particular, if $q \in H^{-1}(\bbR)$ one has the representation
\begin{equation}
q= v_{\infty} + q_2^{\; \prime} \, \text{ for some } v_{\infty} \in H^{\infty}(\bbR), \;
q_2 \in L^2(\bbR; dx).
\end{equation}
Next, for $q \in H^{-1}_{\locunif}(\bbR)$, \cite[Theorem\ 2.1]{HM01} proves the representation
\begin{equation}
q = q_1 + q_2' \, \text{ for some } \, q_j \in L_{\locunif}^j (\bbR; dx), \; j=1,2.   \lb{3.27}
\end{equation}
The decomposition $q = q_1 + q_2'$ in \eqref{3.13} is nonunique. In fact, also the representation
\begin{equation}
q = q_{\infty} + q_2' \, \text{ for some } \, q_{\infty} \in L^{\infty} (\bbR; dx), \;
q_2 \in L_{\locunif}^2 (\bbR; dx)    \lb{3.28}
\end{equation}
is proved in \cite[Theorem\ 2.1]{HM01}. Finally, if $q \in H^{-1}_{\loc}(\bbR)$ is periodic with
period $\omega > 0$, \cite[Remark\ 2.3]{HM01} (see also \cite[Proposition\ 1]{DM10}) provides
the representation
\begin{equation}
q = c + q_2' \, \text{ for some } \, c \in \bbC, \; q_2 \in L_{\locunif}^2 (\bbR; dx),
\, \text{ $q_2$ periodic with period $\omega > 0$.}    \lb{3.29}
\end{equation}

Next, we turn to sequilinear forms $\gQ_q$ generated by a distribution $q \in \cD^{\prime}(\bbR)$
as follows: For $f, g \in C_0^{\infty}(\bbR)$, $\ol f$ is a multiplier for $q$, that is,
$\ol{f} q = q \ol{f} \in \cD'(\bbR)$
and hence the distributional pairing 
\begin{equation}
{}_{\cD'(\bbR)}\langle q f, g\rangle_{\cD(\bbR)} = (f q)(g) = q(\ol{f}g) = \gQ_q (f,g), \quad
f, g \in C_0^\infty(\bbR),       \lb{3.30a}
\end{equation}
is well-defined and thus determines a sesquilinear form $\gQ_q (\cdot,\cdot)$ defined on
$\cD(\bbR) = C_0^{\infty}(\bbR)$. The distribution $q\in \cD'(\bbR)$ is called a {\it multiplier}
from $H^1(\bbR)$ to $H^{-1}(\bbR)$ if \eqref{3.30a} continuously extends from
$C_0^\infty(\bbR)$ to $H^1(\bbR)$, that is, for some $C>0$,
\begin{equation}
|\gQ_q (f,g)| \leq C \|f\|_{H^1(\bbR)} \|g\|_{H^1(\bbR)}, \quad f, g \in C_0^{\infty}(\bbR),  \lb{3.31a}
\end{equation}
and hence one defines this extension $\wti \gQ_q$ via
\begin{align}
\begin{split}
& \wti \gQ_q (f,g) = \lim_{n\to\infty} \gQ_q (f_n,g_n), \quad f, g \in H^1(\bbR),
\; f_n, g_n \in C_0^{\infty}(\bbR), \\
& \quad \text{assuming } \, \lim_{n\to\infty} \|f - f_n\|_{H^1(\bbR)} =0, \;
\lim_{n\to\infty} \|g - g_n\|_{H^1(\bbR)} =0.    \lb{3.32a}
\end{split}
\end{align}
(This extension is independent of the particular choices of sequences $f_n, g_n$ and
by polarization, \eqref{3.31a} for $f=g$ suffices to yield the extension $\wti \gQ_q$ in
\eqref{3.32a}.) The set of all multipliers from $H^1(\bbR)$ to $H^{-1}(\bbR)$ is usually
denoted by $M\big(H^1(\bbR), H^{-1}(\bbR)\big)$, equivalently, one could use the symbol
$\cB\big(H^1(\bbR), H^{-1}(\bbR)\big)$, the bounded linear operators mapping $H^1(\bbR)$
into $H^{-1}(\bbR)$. Thus, for $q \in M\big(H^1(\bbR), H^{-1}(\bbR)\big)$, the
distributional pairing \eqref{3.30a} extends to
\begin{equation}
{}_{H^{-1}(\bbR)}\langle q f, g\rangle_{H^1(\bbR)} = \wti \gQ_q (f,g), \quad
f, g \in H^1(\bbR).
\end{equation}

\begin{theorem} \lb{t3.1} $($\cite{BS02}, \cite[Sect.\ 2.5]{MS09}, \cite{MV05}, \cite{NS06}.$)$
Assume that $q \in \cD^{\prime}(\bbR)$ generates the sesquilinear form $\gQ_q$ as in \eqref{3.30a}.
Then the following conditions $(i)$--$(iii)$ are equivalent: \\[1mm]
$(i)$ $q$ is form bounded with respect to $T_0$, that is, for some $C>0$,
\begin{equation}
|\gQ_q(f,f)| \leq C \|f\|_{H^1(\bbR)}^2
= C\big[\|f'\|_{L^2(\bbR; dx)}^2 + \|f\|_{L^2(\bbR; dx)}^2\big], \quad f \in C_0^{\infty}(\bbR),  \lb{3.34a}
\end{equation}
equivalently,
\begin{equation}
q \in M\big(H^1(\bbR), H^{-1}(\bbR)\big).    \lb{3.35a}
\end{equation}
$(ii)$ $q$ is infinitesimally form bounded with respect to $T_0$, that is, for all $\varepsilon > 0$,
there exists $C_{\varepsilon}>0$, such that,
\begin{equation}
|\gQ_q(f,f)| \leq \varepsilon \|f'\|_{L^2(\bbR; dx)}^2
+ C_\varepsilon \|f\|_{L^2(\bbR; dx)}^2, \quad f \in H^1(\bbR).    \lb{3.36a}
\end{equation}
$(iii)$ $q$ is of the form
\begin{equation}
q = q_1 + q_2', \, \text{ where } \, q_j \in L^j_{\locunif}(\bbR; dx), \; j=1,2.    \lb{3.37a}
\end{equation}
Equivalently,
\begin{equation}
q \in H^{-1}_{\locunif}(\bbR).      \lb{3.38a}
\end{equation}
\end{theorem}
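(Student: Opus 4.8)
The plan is to establish the cycle $(iii)\Rightarrow(ii)\Rightarrow(i)\Rightarrow(iii)$; the two parenthetical reformulations are folded in along the way. Indeed, $(i)$ and $q\in M\big(H^1(\bbR),H^{-1}(\bbR)\big)$ coincide by the very construction \eqref{3.32a} (together with polarization, since \eqref{3.34a} for $f=g$ already forces the two-variable estimate \eqref{3.31a}), while $(iii)$ in its two forms is related by the Hinton--Mingarelli representation \eqref{3.27}, i.e., \cite[Theorem\ 2.1]{HM01}. The implication $(ii)\Rightarrow(i)$ is immediate: putting $\varepsilon=1$ in \eqref{3.36a} gives \eqref{3.34a} with $C=\max(1,C_1)$.

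For $(iii)\Rightarrow(ii)$ I would use the decomposition $q=q_1+q_2'$ with $q_1\in L^1_{\locunif}(\bbR;dx)$ and $q_2\in L^2_{\locunif}(\bbR;dx)$, which (after replacing $q_1,q_2$ by their real parts, which stay in the same spaces) gives, for $f\in C_0^\infty(\bbR)$,
\[
\gQ_q(f,f)=\int_\bbR dx\, q_1(x)\,|f(x)|^2\,-\,2\,\Re\!\int_\bbR dx\, q_2(x)\,\ol{f'(x)}\,f(x),
\]
the second term arising from \eqref{3.30a} applied to $q_2'$ together with $(|f|^2)'=2\Re\big(\ol{f'}f\big)$. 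The $q_1$-contribution is estimated exactly as in the proof sketch following Theorem \ref{t2.1}: the pointwise bound \eqref{2.7a} on each unit interval $\cI=[n,n+1]$, $n\in\bbZ$, multiplied by $|q_1|$, integrated over $\cI$, and summed over $n$, produces an infinitesimal form bound with constant governed by $\sup_{a\in\bbR}\int_a^{a+1}dx\,|q_1(x)|<\infty$. For the $q_2$-contribution I would use Cauchy--Schwarz to dominate it by $2\,\|f'\|_{L^2(\bbR;dx)}\,\|q_2 f\|_{L^2(\bbR;dx)}$, invoke the equivalence of items $(ii)$ and $(iv)$ in Theorem \ref{t2.1} (with $w=q_2$, using $\|T_0^{1/2}f\|_{L^2(\bbR;dx)}=\|f'\|_{L^2(\bbR;dx)}$) to make $\|q_2 f\|_{L^2(\bbR;dx)}^2$ infinitesimally small relative to $\|f'\|_{L^2(\bbR;dx)}^2$, and absorb the cross term via $2ab\le\kappa a^2+\kappa^{-1}b^2$ with $\kappa$ chosen small. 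Adding the two contributions (and extending from $C_0^\infty(\bbR)$ to $H^1(\bbR)$ by density) yields $(ii)$.

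The substantive step is $(i)\Rightarrow(iii)$, which I would prove by localization. Fix a reference cutoff $\zeta\in C_0^\infty(\bbR)$ with $\zeta\equiv1$ on $\supp(\eta)$. For $a\in\bbR$ and $\psi\in C_0^\infty(\bbR)$, the identity $\zeta(\cdot-a)\,\eta(\cdot-a)=\eta(\cdot-a)$ and \eqref{3.30a} give $q\big(\eta(\cdot-a)\psi\big)=\gQ_q\big(\zeta(\cdot-a),\eta(\cdot-a)\psi\big)$, whence hypothesis $(i)$ yields
\[
\big|\big(\eta(\cdot-a)\,q\big)(\psi)\big|\le C\,\|\zeta(\cdot-a)\|_{H^1(\bbR)}\,\|\eta(\cdot-a)\psi\|_{H^1(\bbR)}.
\]
Now $\|\zeta(\cdot-a)\|_{H^1(\bbR)}=\|\zeta\|_{H^1(\bbR)}$ does not depend on $a$, and the product rule shows that multiplication by $\eta(\cdot-a)$ is bounded on $H^1(\bbR)$ with norm controlled only by $\|\eta\|_{L^\infty(\bbR;dx)}$ and $\|\eta'\|_{L^\infty(\bbR;dx)}$, hence again $a$-independent. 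Therefore $\eta(\cdot-a)q$ extends, by density of $C_0^\infty(\bbR)$ in $H^1(\bbR)$, to an element of $\big(H^1(\bbR)\big)^*=H^{-1}(\bbR)$ with $\sup_{a\in\bbR}\|\eta(\cdot-a)q\|_{H^{-1}(\bbR)}<\infty$, i.e., $q\in H^{-1}_{\locunif}(\bbR)$; the explicit splitting $q=q_1+q_2'$, $q_j\in L^j_{\locunif}(\bbR;dx)$, then follows from \cite[Theorem\ 2.1]{HM01}, cf.\ \eqref{3.27}.

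I expect the main obstacle to be this last implication---specifically, the passage from the translation-uniform local bound $q\in H^{-1}_{\locunif}(\bbR)$ to the concrete representation $q=q_1+q_2'$, which rests on the Hinton--Mingarelli theorem (the deepest single input, but one we are entitled to quote). The localization part itself is routine once a single reference cutoff is fixed and translation invariance of the $H^1(\bbR)$-norms is exploited. A minor recurring care point is the rigorous justification, through \eqref{3.30a} and density of $C_0^\infty(\bbR)$, both of the integration-by-parts identity used for the $q_2'$ contribution and of the conjugation conventions identifying the distributional action of $\eta(\cdot-a)q$ with the $H^{-1}(\bbR)$--$H^1(\bbR)$ sesquilinear pairing; in one dimension with real-valued $q$ this is standard but should be noted explicitly.
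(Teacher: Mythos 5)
Your argument is essentially correct, but note that the paper itself offers \emph{no} proof of Theorem \ref{t3.1}: the result is imported wholesale from \cite{BS02}, \cite[Sect.\ 2.5]{MS09}, \cite{MV05}, \cite{NS06}, so there is no in-paper argument to match yours against. What you have supplied is a self-contained elementary route: $(iii)\Rightarrow(ii)$ by splitting off the $q_1$-term (handled exactly as in the sketch following Theorem \ref{t2.1}) and the $q_2'$-term (integration by parts, Cauchy--Schwarz, and Theorem \ref{t2.1}\,$(ii)\Leftrightarrow(iv)$ to absorb the cross term), $(ii)\Rightarrow(i)$ trivially, and $(i)\Rightarrow(iii)$ by polarization plus translation-uniform localization followed by the Hryniv--Mykytyuk representation \eqref{3.27} from \cite[Theorem\ 2.1]{HM01} --- all of which are inputs the paper itself uses elsewhere, so the quotation is legitimate. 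By contrast, the paper's own contribution in this circle of ideas (Theorem \ref{t3.6}) treats only the real-valued, semibounded case and proceeds through the Miura substitution $q=\phi^2-\phi'$ and the supersymmetric factorization, which buys the sharper structural conclusion $\phi\in L^2_{\locunif}(\bbR;dx)$ but does not reprove the general (complex, non-semibounded) equivalence $(i)\Leftrightarrow(ii)\Leftrightarrow(iii)$; your argument covers that general case directly. One small caution: your parenthetical ``after replacing $q_1,q_2$ by their real parts'' is not available when $q$ is complex (it would change $q$), but it is also unnecessary --- the bound $|q_2'(|f|^2)|\le 2\|f'\|_{L^2(\bbR;dx)}\|q_2 f\|_{L^2(\bbR;dx)}$ holds for complex $q_2$ verbatim, so this is a cosmetic slip rather than a gap. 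Likewise, for the cycle to close one should record (as you implicitly do) that the converse direction $q_1+q_2'\in H^{-1}_{\locunif}(\bbR)$ for $q_j\in L^j_{\locunif}(\bbR;dx)$ is elementary from $L^1(\bbR;dx)\hookrightarrow H^{-1}(\bbR)$ in one dimension and the product rule for $\eta(\cdot-a)q_2'$.
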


Of course, if \eqref{3.34a} (equivalently, \eqref{3.36a}) holds, it extends to $\wti \gQ_q$ and
all $f \in H^1(\bbR)$.

\begin{theorem} \lb{t3.2} $($\cite{MS09}, \cite{MV05}.$)$  Assume that $q \in \cD^{\prime}(\bbR)$.
Then the following conditions $(i)$ and $(ii)$ are equivalent: \\[1mm]
$(i)$ $q$ is form compact with respect to $T_0$, that is, the map
\begin{equation}
q \colon H^1(\bbR) \to H^{-1}(\bbR) \, \text{ is compact.}    \lb{3.35}
\end{equation}
$(ii)$ $q$ is of the form
\begin{equation}
q = q_1 + q_2', \, \text{ where } \, q_j \in L^j_{\locunif}(\bbR; dx), \; j=1,2,   \lb{3.15}
\end{equation}
and
\begin{equation}
\lim_{|a|\to\infty} \bigg(\int_a^{a+1} dx \, |q_1(x)|\bigg) = 0, \;
\lim_{|a|\to\infty} \bigg(\int_a^{a+1} dx \, |q_2(x)|^2\bigg) = 0.    \lb{3.16}
\end{equation}
\end{theorem}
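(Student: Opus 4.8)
The plan is to mimic the architecture of Theorem~\ref{t2.5} (reduce a form statement to an operator statement on $L^2(\bbR;dx)$ and feed it into the $|a|\to\infty$ criterion of Theorem~\ref{t2.2}), with Theorem~\ref{t3.1} supplying the overall form of $q$. By \eqref{3.14}, $\big(\wti T_0+\wti I\big)^{-1/2}$ is a topological isomorphism of $L^2(\bbR;dx)$ onto $H^1(\bbR)$ and of $H^{-1}(\bbR)$ onto $L^2(\bbR;dx)$; hence $q\colon H^1(\bbR)\to H^{-1}(\bbR)$ is compact if and only if
\[
\wti A_q:=\big(\wti T_0+\wti I\big)^{-1/2}\,q\,\big(\wti T_0+\wti I\big)^{-1/2}\in\cB_\infty\big(L^2(\bbR;dx)\big).
\]
Throughout write $R_0:=(T_0+I)^{-1/2}\in\cB\big(L^2(\bbR;dx)\big)$; note that the operators $R_0\,(d/dx)$ and $(d/dx)\,R_0$ are bounded on $L^2(\bbR;dx)$ (Fourier symbols $\pm i\xi(1+\xi^2)^{-1/2}$), and that $q_2 R_0\in\cB\big(L^2(\bbR;dx)\big)$ whenever $q_2\in L^2_{\locunif}(\bbR;dx)$ by Theorem~\ref{t2.1}\,$(i)$--$(iii)$.

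For the sufficiency of \eqref{3.15}--\eqref{3.16}, split $\wti A_q=\wti A_{q_1}+\wti A_{q_2'}$ (each summand being bounded by Theorem~\ref{t3.1}). For the first summand put $w=|q_1|^{1/2}\in L^2_{\locunif}(\bbR;dx)$; the first limit in \eqref{3.16} reads $\lim_{|a|\to\infty}\int_a^{a+1}dx\,|w(x)|^2=0$, so Theorem~\ref{t2.2} gives $|q_1|^{1/2}R_0\in\cB_\infty\big(L^2(\bbR;dx)\big)$ and therefore, exactly as in \eqref{2.19a},
\[
\wti A_{q_1}=\big[|q_1|^{1/2}R_0\big]^*\,\sgn(q_1)\,\big[|q_1|^{1/2}R_0\big]\in\cB_\infty\big(L^2(\bbR;dx)\big).
\]
For the second summand a short computation from \eqref{3.30a} gives, for $f,g\in L^2(\bbR;dx)$,
\[
\big(\wti A_{q_2'}f,g\big)_{L^2(\bbR;dx)}=-\big(q_2\,R_0 f,[(d/dx)R_0]g\big)_{L^2(\bbR;dx)}-\big([(d/dx)R_0]f,q_2\,R_0 g\big)_{L^2(\bbR;dx)},
\]
so $\wti A_{q_2'}$ is a sum of operators of the form $[q_2 R_0]^*B$ and $B'[q_2 R_0]$ with $B,B'\in\cB\big(L^2(\bbR;dx)\big)$. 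The second limit in \eqref{3.16} together with Theorem~\ref{t2.2} (now with $w=q_2$) yields $q_2 R_0\in\cB_\infty\big(L^2(\bbR;dx)\big)$; since the product of a bounded and a compact operator is compact, $\wti A_{q_2'}$, and hence $\wti A_q$, is compact.

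For the necessity, assume $q$ is form compact, hence form bounded, so $q\in H^{-1}_{\locunif}(\bbR)$ by Theorem~\ref{t3.1}. The point is that the decomposition \eqref{3.15} furnished there need \emph{not} satisfy \eqref{3.16} (it is highly non-unique: replacing $(q_1,q_2)$ by $(q_1+\phi',q_2-\phi)$ changes nothing, and $\phi(x)=x$ destroys any decay), so one must build a good decomposition from scratch. \emph{Step one:} the local $H^{-1}$-norms decay, i.e. $\lim_{|a|\to\infty}\|q\|_{H^{-1}((a-1,a+2))}=0$; indeed, if not there are $a_n$, $|a_n|\to\infty$, and $g_n\in C_0^\infty((a_n-1,a_n+2))$ with $\|g_n\|_{H^1(\bbR)}\le1$, $|q(g_n)|\ge\delta>0$, and taking fixed-profile cut-offs $f_n(\cdot)=\rho(\cdot-a_n)$ with $\rho\in C_0^\infty(\bbR)$, $\rho\equiv1$ on $(-1,2)$, one has $f_n\rightharpoonup0$, $g_n\rightharpoonup0$ in $H^1(\bbR)$, so form compactness forces $\|qf_n\|_{H^{-1}(\bbR)}\to0$, while $\gQ_q(f_n,g_n)=q(g_n)$ since $f_n\equiv1$ on $\supp(g_n)$, giving $\delta\le|\gQ_q(f_n,g_n)|\le\|qf_n\|_{H^{-1}(\bbR)}\|g_n\|_{H^1(\bbR)}\to0$, a contradiction. \emph{Step two:} on each $I_k:=(k-1,k+2)$, $k\in\bbZ$, use the standard structure of $H^{-1}$ on a bounded interval to write $q=(q_0^{(k)})'+q_{-1}^{(k)}$ with $q_0^{(k)},q_{-1}^{(k)}\in L^2(I_k;dx)$ and $\|q_0^{(k)}\|_{L^2(I_k;dx)}+\|q_{-1}^{(k)}\|_{L^2(I_k;dx)}\le C\|q\|_{H^{-1}(I_k)}\to0$ as $|k|\to\infty$. \emph{Step three:} glue via a partition of unity $\{\phi_k\}_{k\in\bbZ}$, $\supp(\phi_k)\subset I_k$, $\sum_k\phi_k\equiv1$, $\sup_k\|\phi_k'\|_{L^\infty(\bbR;dx)}<\infty$:
\[
q=\bigg(\sum_{k\in\bbZ}\phi_k q_0^{(k)}\bigg)'+\sum_{k\in\bbZ}\big[\phi_k q_{-1}^{(k)}-\phi_k' q_0^{(k)}\big]=:Q_2'+Q_1 .
\]
Because only boundedly many $\phi_k$ are nonzero on any unit interval and $\phi_k'$ is uniformly bounded, Cauchy--Schwarz on each $I_k$ shows $Q_1\in L^1_{\locunif}(\bbR;dx)$, $Q_2\in L^2_{\locunif}(\bbR;dx)$, and bounds $\int_a^{a+1}dx\,|Q_1(x)|$ and $\int_a^{a+1}dx\,|Q_2(x)|^2$ by a fixed finite sum of the numbers $\|q_j^{(k)}\|_{L^2(I_k;dx)}^2$ over the finitely many $k$ with $I_k\cap(a-1,a+2)\ne\varnothing$, which tend to $0$ as $|a|\to\infty$; hence $q=Q_1+Q_2'$ satisfies \eqref{3.16}.

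The main obstacle is this necessity direction: one cannot simply quote a decomposition from Theorem~\ref{t3.1}, and the real work is (i) converting compactness of $\wti A_q$ into decay of the local $H^{-1}$-norms and (ii) reassembling a \emph{global} decomposition while keeping control of the commutator errors $\phi_k' q_0^{(k)}$ generated by the partition of unity — it is essential that these errors land in the $L^1_{\locunif}$ (not merely $L^2_{\locunif}$) slot and still inherit the decay. Of course, all of this is subsumed by the sharp multiplier results of Maz'ya--Shaposhnikova \cite{MS09} and Maz'ya--Verbitsky \cite{MV05}, which one may alternatively invoke directly.
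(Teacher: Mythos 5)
Your proof is correct, but it takes a genuinely different route from the paper: the paper does not prove Theorem \ref{t3.2} at all, it simply attributes it to Maz'ya--Shaposhnikova \cite{MS09} and Maz'ya--Verbitsky \cite{MV05} and remarks that the proofs there rest on Bessel capacity methods. What you supply instead is a self-contained, one-dimensional argument. Your sufficiency direction is essentially an assembly of tools the paper already has on hand --- the isomorphisms \eqref{3.13}, \eqref{3.14}, the factorization device \eqref{2.19a}, and the compactness criterion of Theorem \ref{t2.2} applied to $w=|q_1|^{1/2}$ and $w=q_2$ --- and it is sound; the identity for $\wti A_{q_2'}$ correctly exhibits it as a sum of (bounded)$\circ$(compact) terms. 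The necessity direction is the part with real content not present anywhere in the paper, and you handle it correctly: you rightly observe that the decomposition furnished by Theorem \ref{t3.1} carries no decay, so you manufacture one by (a) converting compactness into decay of local $H^{-1}$-norms via weakly null translated cut-offs $f_n=\rho(\cdot-a_n)$ (compact operators on the reflexive space $H^1(\bbR)$ map weakly null sequences to norm-null ones, and $\gQ_q(f_n,g_n)=q(g_n)$ since $f_n\equiv 1$ on $\supp(g_n)$), (b) invoking the Riesz representation in $H^1_0(I_k)$ to get local decompositions with uniform constants, and (c) regluing with a partition of unity whose commutator terms $\phi_k'q_0^{(k)}$ land, as you stress, in the $L^1_{\locunif}$ slot with the decay intact. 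The only blemish is cosmetic: Cauchy--Schwarz bounds $\int_a^{a+1}dx\,|Q_1(x)|$ by a finite sum of the norms $\|q_j^{(k)}\|_{L^2(I_k;dx)}$ themselves (times $|I_k|^{1/2}$), not of their squares; the conclusion is unaffected. The trade-off between the two approaches is clear: the citation to \cite{MV05} covers general dimensions and delivers sharp capacity-type characterizations, whereas your argument is elementary, stays entirely within the paper's own framework, but is specific to one dimension.
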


The proof relies on Bessel capacity methods described in Maz'ya and Verbitsky \cite{MV02},
\cite{MV02a}, \cite{MV05}, \cite{MV06}, and Maz'ya and Shaposhnikova \cite{MS09}.

\begin{remark} \lb{r3.3}
If $q \in \cD'(\bbR)$ is real valued and one of the conditions $(i)$--$(iii)$ in Theorem \ref{t3.1}
is satisfied, then the form sum
\begin{equation}
\gQ_{T}(f,g) = \gQ_{T_0}(f,g) + q(\ol f g), \quad f, g \in \dom(\gQ_{T}) = H^1(\bbR),   \lb{3.17}
\end{equation}
defines a closed, densely defined, symmetric sesquilinear form $\gQ_{T}$ in $L^2(\bbR; dx)$,
bounded from below. The self-adjoint operator $T$ in $L^2(\bbR; dx)$, bounded from below, and
uniquely associated to the form $\gQ_{T}$ then can be described as follows,
\begin{align}
\begin{split}
& T f = \tau f, \quad \tau f = - (f' - q_2 f)' - q_2 (f' - q_2 f) + (q_1 - q_2^2)f,  \\
& f \in \dom(T) = \big\{g \in L^2(\bbR; dx) \,\big| \, g, (g' - q_2 g) \in AC_{\loc}(\bbR), \,
\tau g \in  L^2(\bbR; dx)\big\}.    \lb{3.18}
\end{split}
\end{align}
In particular, the differential expression $\tau$ formally corresponds to a Schr\"odinger operator
with distributional potential $q \in H_{\locunif}^{-1}(\bbR)$,
\begin{equation}
\tau = - (d^2/dx^2) + q(x), \quad q = q_1 + q_2', \quad q_j \in L^j_{\locunif}(\bbR; dx), \; j=1,2.
\lb{3.19}
\end{equation}
This is a consequence of the direct methods established in \cite{BS02},
\cite{HM01}--\cite{HM12}, \cite{KPST05}, \cite{SS99}, \cite{SS03}, \cite{We87},
and of the Weyl--Titchmarsh theory approach to Schr\"odinger operators with
distributional potentials developed in \cite{EGNT12a} (see also \cite{EGNT12}, \cite{EGNT12b},
and the detailed list of references therein). In particular, since $\tau$ is assumed to be bounded from
below, $\tau$ is in the limit point case at $\pm \infty$, rendering the maximally defined operator
$T$ in \eqref{3.18} to be self-adjoint (see also \cite{AKM10} and \cite{EGNT12a}). We will provide
further details on $\dom(T)$ in Remark \ref{r3.8}.
\end{remark}

Next, we turn to an elementary alternative approach to this circle of ideas in the real-valued context,
based on the concept of Miura transformations (cf.\ \cite{BGGSS87}, \cite{De78}, \cite{EGNT12},
\cite{Ge91}, \cite{Ge92}, \cite{GSS91}, \cite{GS95}, \cite{KPST05}, \cite{KT04},
\cite[Ch.\ 5]{Th92}, and the extensive literature cited therein) 
\begin{equation}
\begin{cases} L^2_{\loc}(\bbR; dx) \to H^{-1}_{\loc}(\bbR) \\
\phi \mapsto \phi^2 -  \phi^{\prime} \end{cases}     \lb{3.20}
\end{equation}
with associated self-adjoint Schr\"odinger operator $T_1 \geq 0$ in $L^2(\bbR; dx)$ given by
\begin{equation}
T_1 = A^* A,      \lb{3.21}
\end{equation}
with $A$ the closed operator defined in in $L^2(\bbR; dx)$ by
\begin{align}
\begin{split}
& A f = \alpha f, \quad \alpha f = f' + \phi f,  \\
& f \in \dom(A) = \big\{g \in L^2(\bbR; dx) \,\big| \, g \in AC_{\loc}(\bbR), \,
\alpha g \in  L^2(\bbR; dx)\big\},      \lb{3.22}
\end{split}
\end{align}
implying,
\begin{align}
\begin{split}
& A^* f = \alpha^+ f, \quad \alpha^+ f = - f' + \phi f,   \\
& f \in \dom(A^*) = \big\{g \in L^2(\bbR; dx) \,\big| \, g \in AC_{\loc}(\bbR), \,
\alpha^+ g \in  L^2(\bbR; dx)\big\}.    \lb{3.23}
\end{split}
\end{align}
Closedness of $A$ and the fact that $A^*$ is given by \eqref{3.23} was proved in \cite{KPST05}
(the extension to $\phi \in L^1_{\loc}(\bbR; dx)$, $\phi$ real-valued, was treated in \cite{EGNT12}).
In addition, it was proved in \cite{KPST05} that
\begin{equation}
C_0^{\infty}(\bbR) \, \text{ is an operator core for $A$ and $A^*$.}    \lb{3.24}
\end{equation}
Thus, $T_1$ acts as,
\begin{align}
\begin{split}
& T_1 f = \tau_1 f, \quad \tau_1 f = \alpha^+ \alpha f = - (f' + \phi f)' + \phi (f' + \phi f),   \\
& f \in \dom(T_1) = \big\{g \in L^2(\bbR; dx) \,\big|\, g, \alpha g \in AC_{\loc}(\bbR),
\tau_1 g \in L^2(\bbR; dx)\big\}.      \lb{3.25}
\end{split}
\end{align}
In particular, $\tau_1$ is formally of the type,
\begin{equation}
\tau_1 = - (d^2/dx^2) + V_1(x), \quad
V_1 = \phi^2 - \phi', \quad \phi \in L^2_{\loc}(\bbR; dx),     \lb{3.26}
\end{equation}
displaying the Riccati equation connection between $V_1$ and $\phi$ in connection with Miura's
transformation \eqref{3.20}.

\begin{theorem} \lb{t3.4} $($\cite{KPST05}.$)$
Assume that $q \in H^{-1}_{\loc}(\bbR)$ is real-valued.
Then the following conditions $(i)$--$(iii)$ are equivalent: \\[1mm]
$(i)$ $q = \phi^2 - \phi'$ for some real-valued $\phi \in L^2_{\loc}(\bbR; dx)$. \\[1mm]
$(ii)$ $(- d^2/dx^2) + q \geq 0$ in the sense of distributions, that is,
\begin{equation}
(f',f')_{L^2(\bbR; dx)} + q({\ol f} f)
= {}_{H^{-1}(\bbR)}\langle (- f'' + q f), f\rangle_{H^1(\bbR)} \geq 0 \, \text{ for all } \,
f \in C_0^{\infty} (\bbR).    \lb{3.48}
\end{equation}
$(iii)$ $[(- d^2/dx^2) + q] \psi = 0$ has a positive solution
$0 < \psi \in H^1_{\loc}(\bbR)$.
\end{theorem}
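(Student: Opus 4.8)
Throughout I would prove the cyclic chain $(i)\Rightarrow(ii)\Rightarrow(iii)\Rightarrow(i)$, with all the low‑regularity one‑dimensional Sturm--Liouville machinery for a distributional coefficient $q\in H^{-1}_{\loc}(\bbR)$ borrowed from \cite{KPST05} (and \cite{EGNT12}, \cite{EGNT12a}). The implication $(i)\Rightarrow(ii)$ is just the supersymmetric factorization already recorded in \eqref{3.20}--\eqref{3.26}: if $q=\phi^2-\phi'$ with real‑valued $\phi\in L^2_{\loc}(\bbR;dx)$, then $-d^2/dx^2+q$ is formally $\alpha^+\alpha$ with $\alpha f=f'+\phi f$, and for $f\in C_0^{\infty}(\bbR)$ one computes, integrating the distributional pairing of $\phi'$ against $|f|^2$ by parts,
\[
(f',f')_{L^2(\bbR;dx)}+q(\ol f f)=\int_{\bbR}dx\,\big[|f'|^2+\phi\,(|f|^2)'+\phi^2|f|^2\big]=\norm{f'+\phi f}_{L^2(\bbR;dx)}^2\ge 0 ,
\]
which is exactly \eqref{3.48} (note $\phi f\in L^2(\bbR;dx)$ since $\supp(f)$ is compact).

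For $(iii)\Rightarrow(i)$ I would use the Riccati substitution $\phi:=-\psi'/\psi$. Since $\psi\in H^1_{\loc}(\bbR)\hookrightarrow C(\bbR)$ is strictly positive, it is bounded below by a positive constant on every compact interval, so $1/\psi\in L^{\infty}_{\loc}(\bbR)$ and $(1/\psi)'=-\psi'/\psi^2\in L^2_{\loc}(\bbR)$; hence $1/\psi\in H^1_{\loc}(\bbR)$ and $\phi=-\psi'/\psi\in L^2_{\loc}(\bbR;dx)$ is real‑valued. To check $q=\phi^2-\phi'$ in $\cD'(\bbR)$ I would multiply by the multiplier $\psi$ and use the distributional Leibniz rule for the product of an $L^2_{\loc}$ function with an $AC_{\loc}$ function: using $\phi\psi=-\psi'$ one gets $(\phi^2-\phi')\psi=\phi^2\psi-(\phi\psi)'+\phi\psi'=\psi''=q\psi$, and multiplying back by the multiplier $1/\psi\in H^1_{\loc}(\bbR)$ yields $\phi^2-\phi'=q$. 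This is \eqref{3.26} read in reverse.

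The substantial implication is $(ii)\Rightarrow(iii)$. For every bounded open $J\subset\bbR$ the Dirichlet realization $T_J$ of $-d^2/dx^2+q$ in $L^2(J;dx)$ is well defined (the form $f\mapsto q(\ol f f)$ is form compact relative to the Dirichlet Laplacian on $J$ because $H^1_0(J)\hookrightarrow\hookrightarrow C(\ol J)$ and $q\in H^{-1}_{\loc}(\bbR)$), so $T_J$ is self‑adjoint, bounded below, with compact resolvent and simple lowest eigenvalue $\lambda_1(J)$; restricting \eqref{3.48} to $C_0^{\infty}(J)$ gives $\lambda_1(J)\ge 0$. I would then upgrade this to $\lambda_1(J)>0$ for every bounded $J$: pick a bounded $J'\supsetneq\ol J$; then $\lambda_1(J')\ge 0$ by (ii), while $\lambda_1(J)>\lambda_1(J')$ because equality would force the $J$‑ground state, extended by zero, to be an eigenfunction of $T_{J'}$ vanishing on the nonempty open set $J'\setminus\ol J$, contradicting uniqueness for the initial value problem of the first‑order system equivalent to $-\psi''+q\psi=0$. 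In classical ODE language $\lambda_1(J)>0$ for all bounded $J$ says that $-\psi''+q\psi=0$ is disconjugate on $\bbR$. To produce a positive global solution I would, for each $n\in\bbN$, solve the boundary value problem $-\psi_n''+q\psi_n=0$ on $(-n,n)$ with $\psi_n(-n)=\psi_n(n)=1$ (uniquely solvable since $0\notin\spec(T_{(-n,n)})$), note $\psi_n>0$ on $[-n,n]$ by the maximum principle / positivity of the Poisson kernel (valid because $\lambda_1((-n,n))>0$), normalize $\wti\psi_n:=\psi_n/\psi_n(0)$, and extract by a diagonal argument a subsequence converging locally uniformly — together with the quasi‑derivatives — to a solution $\psi\ge 0$ of $-\psi''+q\psi=0$ on $\bbR$ with $\psi(0)=1$; here one uses a Harnack inequality for positive solutions (uniform in $n$ on compacts, since all $\wti\psi_n$ solve the same equation) together with the equicontinuity furnished by the first‑order system with $L^2_{\loc}$ coefficients. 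A final Harnack estimate gives $\psi>0$ on $\bbR$, and $\psi\in H^1_{\loc}(\bbR)$ since, writing $q=q_2'$ with $q_2\in L^2_{\loc}(\bbR;dx)$ (cf.\ \cite{DM10}) and $\psi^{[1]}=\psi'-q_2\psi\in AC_{\loc}(\bbR)$, one has $\psi'=\psi^{[1]}+q_2\psi\in L^2_{\loc}(\bbR)$.

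I expect the main obstacle to be precisely Step~$(ii)\Rightarrow(iii)$: each ingredient used there — existence/uniqueness for initial value problems, the Wronskian calculus, Sturm‑type comparison and separation (hence disconjugacy), the maximum principle, the Harnack inequality, and continuous dependence of solutions — is classical for smooth $q$, but must be available for $q\in H^{-1}_{\loc}(\bbR)$ in the quasi‑derivative / first‑order‑system formulation. Assembling these facts from \cite{KPST05} (see also \cite{EGNT12}, \cite{EGNT12a}) and wiring them together as above is the crux of the argument; once they are granted, the remaining steps are the short computations indicated in the first two paragraphs.
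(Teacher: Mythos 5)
The paper itself offers no proof of Theorem \ref{t3.4}: it is quoted verbatim from \cite{KPST05} and used as a black box (e.g.\ in the proof of Theorem \ref{t3.6}), so there is no in-paper argument to compare yours against line by line. On its own terms your proposal is sound. The computations for $(i)\Rightarrow(ii)$ and $(iii)\Rightarrow(i)$ are exactly the standard supersymmetric/Riccati manipulations, and your care with multipliers ($\psi,1/\psi\in H^1_{\loc}(\bbR)\cap L^\infty_{\loc}(\bbR)$ acting on $\phi'\in H^{-1}_{\loc}(\bbR)$, the Leibniz rule for $L^2_{\loc}\cdot H^1_{\loc}$ products) is precisely what is needed in the distributional setting. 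For the hard implication $(ii)\Rightarrow(iii)$ your exhaustion argument (nonnegativity and then strict positivity of $\lambda_1(J)$ for every bounded $J$ via domain monotonicity plus uniqueness of the initial value problem, positivity of the Dirichlet boundary-value solutions, Harnack plus Arzel\`a--Ascoli to pass to a global positive solution) is a legitimate classical route, and all the ODE ingredients you flag — existence/uniqueness for the quasi-derivative first-order system, the modified Wronskian, oscillation theory — are indeed available for $q=q_2'$, $q_2\in L^2_{\loc}(\bbR;dx)$ (cf.\ \cite{EGNT12a}, which the paper itself invokes for exactly these facts in Lemma \ref{l4.2}). The main stylistic difference from the source you would be re-deriving is that \cite{KPST05} organizes the hard step as $(ii)\Rightarrow(i)$ directly: they approximate $q$ by smooth potentials, take logarithmic derivatives $\phi_n=-\psi_n'/\psi_n$ of positive solutions of the regularized problems, and extract a limit from an a priori $L^2_{\loc}$ Riccati bound on the $\phi_n$, obtaining the representation $q=\phi^2-\phi'$ (and hence $(iii)$ via $\psi=\exp\big(-\int\phi\big)$) without ever invoking a Harnack inequality or a maximum principle for the singular equation. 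That route trades your compactness-of-solutions machinery for compactness of Riccati representatives, which is somewhat lighter in the $H^{-1}_{\loc}$ setting; your version is more self-contained on the spectral side but places more of the burden on low-regularity Sturm theory. Either way, no step of your outline is actually broken.
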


\begin{theorem} \lb{t3.5} $($\cite{KPST05}.$)$
Assume that $q \in H^{s-1}(\bbR)$, $s\geq 0$, is real-valued.
Then the following conditions $(i)$ and $(ii)$ are equivalent: \\[1mm]
$(i)$ $q = \phi^2 - \phi'$ for some real-valued $\phi \in H^s(\bbR)$. \\[1mm]
$(ii)$ $(- d^2/dx^2) + q \geq 0$ in the sense of distributions $($cf.\ \eqref{3.48}$)$
and $q = q_1 + q_2^{\prime}$ for some $q_j \in L^j(\bbR; dx)$, $j=1,2$.
\end{theorem}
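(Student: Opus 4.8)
\textbf{Proof proposal for Theorem \ref{t3.5}.}
The plan is to deduce Theorem \ref{t3.5} from Theorem \ref{t3.4} by upgrading the
regularity of the Riccati solution $\phi$ under the additional hypothesis $q \in
H^{s-1}(\bbR)$. First I would dispose of the easy direction $(i) \Rightarrow (ii)$:
if $q = \phi^2 - \phi'$ with $\phi \in H^s(\bbR)$, then nonnegativity in the sense
of \eqref{3.48} is immediate from the factorization, since for $f \in
C_0^\infty(\bbR)$ one has $\langle (-f'' + qf), f\rangle = \|f' + \phi f\|_{L^2}^2
\geq 0$ (using that $\phi$ is a multiplier so that $\phi f \in L^2$); and the
decomposition $q = q_1 + q_2'$ with $q_1 = \phi^2 \in L^1(\bbR)$ (since $\phi \in
H^s(\bbR) \subset L^2(\bbR)$) and $q_2 = -\phi \in L^2(\bbR)$ (more precisely, one
writes $\phi^2$ itself using $q_1 = \phi^2$, $q_2 = -\phi$, both in the right
Lebesgue classes) follows; one should note $\phi \in H^s \subset H^0 = L^2$ and
$\phi^2 \in L^1$ by Cauchy--Schwarz.

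For the substantive direction $(ii) \Rightarrow (i)$, I would proceed in three
steps. Step one: invoke Theorem \ref{t3.4} (applicable since $q \in H^{s-1}(\bbR)
\subset H^{-1}_{\loc}(\bbR)$ is real-valued and nonnegative in the distributional
sense) to produce \emph{some} real-valued $\phi \in L^2_{\loc}(\bbR; dx)$ with $q =
\phi^2 - \phi'$; equivalently, by Theorem \ref{t3.4}$(iii)$, a positive solution $0
< \psi \in H^1_{\loc}(\bbR)$ of $-\psi'' + q\psi = 0$ with $\phi = -\psi'/\psi$.
Step two: use the representation \eqref{3.24a}, i.e.\ $q = v_\infty + v_s'$ for some
$v_\infty \in H^\infty(\bbR)$, $v_s \in H^s(\bbR)$ (from \cite[Lemma 2.1]{KPST05}),
to rewrite the Riccati equation $\phi' = \phi^2 - q$ as $(\phi + v_s)' = \phi^2 -
v_\infty$, and then bootstrap: starting from $\phi \in L^2_{\loc}$ one gets $\phi^2
\in L^1_{\loc}$, hence $(\phi + v_s)' \in L^1_{\loc}$ plus better terms; iterating
and tracking Sobolev indices (using that $H^s(\bbR)$ is an algebra for $s > 1/2$ and
the standard Sobolev multiplication/embedding estimates, with a separate more
delicate argument for $0 \leq s \leq 1/2$ handled via the multiplier space
$M(H^1, H^{-1})$ and the fractional integration already encoded in Theorem
\ref{t3.1}) one concludes $\phi \in H^s(\bbR)$. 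Step three: record that this $\phi$
is the one sought in $(i)$, completing the equivalence.

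The main obstacle will be Step two in the low-regularity range $0 \leq s \leq 1/2$,
where $H^s$ is not an algebra and one cannot naively multiply $\phi$ by itself and
stay in $H^s$. Here I expect the right tool is the characterization of admissible
distributional potentials via the multiplier property $q \in M(H^1(\bbR),
H^{-1}(\bbR))$ together with the precise form structure $q = q_1 + q_2'$ with $q_j
\in L^j(\bbR)$: from $q \in H^{s-1}(\bbR)$ and the nonnegativity, one knows $\phi$
a priori lies in $L^2(\bbR)$ (not just $L^2_{\loc}$) by controlling $\int_\bbR
\phi^2 \, |f|^2$ via $\langle qf, f\rangle$ plus $\|f'\|_{L^2}^2$ applied to a
partition of unity, and then the Riccati identity $(\phi + v_s)' = \phi^2 -
v_\infty$ shows $(\phi + v_s)'$ differs from an $L^1$ function by an $H^\infty$
function, which pins $\phi + v_s$, hence $\phi$, into $H^1 \subset H^s$ when $s \leq
1$ and into $H^s$ in general by one more bootstrap round. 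I would lean on
\cite[Lemma 2.1]{KPST05} and \cite{BS02}, \cite{HM01} for the decomposition and
multiplication lemmas rather than reproving them, and organize the bootstrap as a
short induction on $\lceil s \rceil$.
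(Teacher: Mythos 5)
The paper offers no proof of Theorem \ref{t3.5}: it is quoted directly from \cite{KPST05}, so there is no internal argument to compare against. Your proposal is, in essence, a reconstruction of the argument in \cite{KPST05}, and its architecture is sound. The direction $(i)\Rightarrow(ii)$ is exactly as you describe. For $(ii)\Rightarrow(i)$ the two substantive points are (a) the global bound $\phi\in L^2(\bbR;dx)$ and (b) the Sobolev bootstrap, and you have correctly identified both. Point (a) works as you sketch: testing the distributional identity $\int_\bbR \phi^2 g = q(g)-\int_\bbR \phi g'$ against $g=\chi_n^2$ for smooth cutoffs $\chi_n$ equal to $1$ on $[-n,n]$, absorbing $2\int_\bbR\phi\chi_n\chi_n'$ by Cauchy--Schwarz, and using $q=q_1+q_2'$ with $q_j\in L^j(\bbR;dx)$ to bound $q(\chi_n^2)$ uniformly in $n$. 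Notably, this shows that \emph{every} local Riccati solution $\phi\in L^2_{\loc}(\bbR;dx)$ furnished by Theorem \ref{t3.4} is automatically globally square integrable, so no selection among the one-parameter family of positive solutions described in Remark \ref{r3.9} is required.

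The one step that is wrong as written is the assertion that $(\phi+v_s)'\in L^1(\bbR;dx)+H^{\infty}(\bbR)$ ``pins $\phi+v_s$, hence $\phi$, into $H^1(\bbR)$.'' It does not: a derivative in $L^1(\bbR;dx)$ only yields $(\phi+v_s)'\in H^{-1/2-\varepsilon}(\bbR)$ (via boundedness of the Fourier transform of $L^1$-functions), hence $\phi+v_s\in H^{1/2-\varepsilon}(\bbR)$, not $H^1(\bbR)$; reaching $H^1(\bbR)$ in one step would require $\phi^2\in L^2(\bbR;dx)$, that is, $\phi\in L^4(\bbR;dx)$, which is not yet known at that stage. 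This is a quantitative slip rather than a conceptual gap: the iteration you describe elsewhere in the proposal---a gain of roughly $1/2$ a derivative per round from $\phi\in H^{\sigma}(\bbR)$, $0\le\sigma\le 1/2$, via $\phi^2\in H^{2\sigma-1/2-\varepsilon}(\bbR)$ and $\phi'=\phi^2-q$, followed by a gain of a full derivative per round once $\sigma>1/2$ where $H^{\sigma}(\bbR)$ is an algebra---terminates at $H^s(\bbR)$ after finitely many steps, and that is what should replace the one-shot claim.
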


The following appears to be a new result:

\begin{theorem} \lb{t3.6}
Assume that $q \in H^{-1}_{\locunif}(\bbR)$ is real-valued.
Then the following conditions $(i)$--$(iii)$ are equivalent: \\[1mm]
$(i)$ $q = \phi^2 - \phi'$ for some real-valued
$\phi \in L^2_{\locunif}(\bbR; dx)$. \\[1mm]
$(ii)$ $(- d^2/dx^2) + q \geq 0$ in the sense of distributions $($cf.\ \eqref{3.48}$)$. \\[1mm]
$(iii)$ $[(- d^2/dx^2) + q] \psi = 0$ has a positive solution
$0 < \psi \in H^1_{\loc}(\bbR)$.
\end{theorem}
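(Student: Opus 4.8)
The plan is to bootstrap everything off the local result Theorem~\ref{t3.4}, which applies here because $H^{-1}_{\locunif}(\bbR) \subset H^{-1}_{\loc}(\bbR)$, and then to use the \emph{uniform} form bound on $q$ supplied by Theorem~\ref{t3.1} to promote the membership $\phi \in L^2_{\loc}(\bbR; dx)$ to $\phi \in L^2_{\locunif}(\bbR; dx)$. Since the positive solution $\psi$ delivered by Theorem~\ref{t3.4} lies in $H^1_{\loc}(\bbR)$ regardless of which of the two classes $q$ belongs to, the equivalence $(ii) \Leftrightarrow (iii)$ is already contained in Theorem~\ref{t3.4} and requires nothing new.

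For $(i) \Rightarrow (ii)$ I would argue directly. If $q = \phi^2 - \phi'$ with real-valued $\phi \in L^2_{\locunif}(\bbR; dx) \subset L^2_{\loc}(\bbR; dx)$, then reading $\phi' = \phi^2 - q$ in $\cD'(\bbR)$ and using $2\Re (f',\phi f)_{L^2(\bbR; dx)} = \int_{\bbR} \phi\,(|f|^2)'\,dx = - {}_{\cD'(\bbR)}\langle \phi', |f|^2\rangle_{\cD(\bbR)}$ for $f \in C_0^{\infty}(\bbR)$ yields the factorization identity
\[
\|f' + \phi f\|_{L^2(\bbR; dx)}^2 = \|f'\|_{L^2(\bbR; dx)}^2 + q(|f|^2) = {}_{H^{-1}(\bbR)}\langle (- f'' + q f), f \rangle_{H^1(\bbR)},
\]
whose left-hand side is manifestly nonnegative; this is \eqref{3.48}. (Equivalently, one may simply quote Theorem~\ref{t3.4}\,$(i)\Rightarrow(ii)$.)

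The real content is $(ii)\Rightarrow(i)$. Assuming \eqref{3.48}, Theorem~\ref{t3.4}\,$(ii)\Rightarrow(i)$ already supplies a real-valued $\phi \in L^2_{\loc}(\bbR; dx)$ with $q = \phi^2 - \phi'$ (namely $\phi = -\psi'/\psi$), so it only remains to verify the locally uniform square integrability. I would fix once and for all a cutoff $\chi \in C_0^{\infty}(\bbR)$ with $0 \le \chi \le 1$, $\chi \equiv 1$ on $[0,1]$, $\supp(\chi) \subset (-1,2)$, put $\chi_a(\cdot) = \chi(\cdot - a)$ for $a \in \bbR$, and apply the identity of the preceding paragraph with $f = \chi_a$. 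Bounding the cross term via $2\|\chi_a'\|_{L^2(\bbR; dx)}\|\phi\chi_a\|_{L^2(\bbR; dx)} \le \tfrac12 \|\phi\chi_a\|_{L^2(\bbR; dx)}^2 + 2\|\chi'\|_{L^2(\bbR; dx)}^2$ then gives
\[
\int_a^{a+1} |\phi(x)|^2\,dx \le \|\phi\chi_a\|_{L^2(\bbR; dx)}^2 \le 6 \|\chi'\|_{L^2(\bbR; dx)}^2 + 2\,\big|q(\chi_a^2)\big|.
\]
By Theorem~\ref{t3.1} there is a constant $C>0$ with $|q(\chi_a^2)| = |\gQ_q(\chi_a,\chi_a)| \le C\|\chi_a\|_{H^1(\bbR)}^2$, and $\|\chi_a\|_{H^1(\bbR)} = \|\chi\|_{H^1(\bbR)}$ by translation invariance; hence the right-hand side is independent of $a$ and $\sup_{a\in\bbR}\int_a^{a+1}|\phi(x)|^2\,dx < \infty$, that is, $\phi \in L^2_{\locunif}(\bbR; dx)$. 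Combined with $(i)\Rightarrow(ii)$ and $(ii)\Leftrightarrow(iii)$ this establishes the theorem.

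The one place demanding care is the distributional bookkeeping in the factorization identity — identifying $\int_{\bbR}\phi\,(|f|^2)'\,dx$ with $-\,{}_{\cD'(\bbR)}\langle\phi',|f|^2\rangle_{\cD(\bbR)}$ and then rewriting $\langle\phi',|f|^2\rangle$ via $\phi' = \phi^2 - q$ — but this is exactly the computation already underlying Theorems~\ref{t3.4} and \ref{t3.5}, so I do not expect a genuine obstacle; the only new input is the translation invariance of $\|\chi_a\|_{H^1(\bbR)}$, which is precisely what converts the form bound of Theorem~\ref{t3.1} into a locally uniform estimate for $\phi$.
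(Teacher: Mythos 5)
Your proposal is correct, but the heart of the argument --- the implication $(ii)\Rightarrow(i)$, i.e.\ upgrading $\phi\in L^2_{\loc}(\bbR;dx)$ to $\phi\in L^2_{\locunif}(\bbR;dx)$ --- is carried out by a genuinely different route than the paper's. The paper proceeds operator-theoretically: it forms $\gQ_{T_1}(f,g)=(A_0f,A_0g)_{L^2(\bbR;dx)}$ with $A_0=(d/dx)+\phi_0$, identifies it with the form $\hatt\gQ$ built from a decomposition $q=q_1+q_2'$, $q_j\in L^j_{\locunif}(\bbR;dx)$, using that $C_0^\infty(\bbR)$ is a common form core, concludes $\dom(A_0)=H^1(\bbR)$ and hence $\dom(\phi_0)\supseteq H^1(\bbR)$, and only then invokes Theorem~\ref{t2.1}\,$(i),(ii)$ to get $\phi_0\in L^2_{\locunif}(\bbR;dx)$. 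You instead test the factorization identity $\|f'+\phi f\|^2_{L^2(\bbR;dx)}=\|f'\|^2_{L^2(\bbR;dx)}+q(|f|^2)$ directly against translated cutoffs $\chi_a$, absorb the cross term, and use the translation-invariant form bound $|\gQ_q(\chi_a,\chi_a)|\le C\|\chi\|^2_{H^1(\bbR)}$ from Theorem~\ref{t3.1} to read off $\sup_a\int_a^{a+1}|\phi|^2\,dx<\infty$. This is in effect a self-contained quantitative proof of Lemma~\ref{l3.7} (which the paper imports from \cite{HM12}), and it is more elementary: it bypasses the form-core identification and the closedness considerations entirely. What it does not deliver, and what the paper's route yields as a byproduct, is the domain equality $\dom(A_0)=H^1(\bbR)$, which the paper reuses in Remark~\ref{r3.8}. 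The distributional bookkeeping you flag is indeed harmless for $f\in C_0^\infty(\bbR)$ (then $|f|^2\in C_0^\infty(\bbR)$ is an admissible test function and $\phi f\in L^2(\bbR;dx)$), and your appeal to Theorem~\ref{t3.1} is not circular since that theorem is imported from the literature; the only cosmetic slip is the constant ($4\|\chi'\|^2_{L^2(\bbR;dx)}$ rather than $6\|\chi'\|^2_{L^2(\bbR;dx)}$ suffices), which is immaterial.
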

\begin{proof}
We will show that $(ii) \Longrightarrow (iii) \Longrightarrow (i) \Longrightarrow (ii)$.

Given item $(ii)$, that is, $q \in H^{-1}_{\locunif}(\bbR)$ is real-valued and
$(- d^2/dx^2) + q \geq 0$, one concludes the existence of $0 < \psi_0 \in H^1_{\loc}(\bbR)$
such that $- \psi_0'' + q \psi_0 =0$ by Theorem \ref{t3.4}\,$(iii)$. Thus, item $(iii)$ follows. \\
Introducing
\begin{equation}
\phi_0 = - \psi_0'/\psi_0,
\end{equation}
one infers that
\begin{equation}
\phi_0 \in L^2_{\loc}(\bbR; dx) \, \text{ is real-valued and } \, q = \phi_0^2 - \phi_0'.    \lb{3.30}
\end{equation}
Next, introducing $A_0^{}$ and $A_0^*$ as in \eqref{3.22} and \eqref{3.23}, with $\alpha$
replaced by $\alpha_0^{} = (d/dx) + \phi_0$ (and analogously for $\alpha^+$), we now introduce
the sesquilinear form $\dot \gQ_{T_1}$ and its closure, $\gQ_{T_1}$, by
\begin{align}
\begin{split}
& \dot \gQ_{T_1} (f,g) = (A_0^{} f, A_0^{} g)_{L^2(\bbR; dx)}, \quad
f, g \in \dom\big(\dot \gQ_{T_1}\big) = C_0^{\infty} (\bbR),    \lb{3.31} \\
& \gQ_{T_1} (f,g) = (A_0^{} f, A_0^{} g)_{L^2(\bbR; dx)}, \quad f, g \in \dom(\gQ_{T_1})
= \dom(A_0^{}),
\end{split}
\end{align}
with $0 \leq T_1 = A_0^* A_0^{}$ the uniquely associated self-adjoint operator.

Since by hypothesis $q \in H^{-1}_{\locunif}(\bbR)$, it is known (cf.\ \cite{HM01}) that $q$ can be
written as
\begin{equation}
q = q_1 + q_2' \, \text{ for some } \, q_j \in L_{\locunif}^j (\bbR; dx), \; j = 1,2,
\end{equation}
and hence, we also introduce the sesquilinear form $\dot{\hatt{\gQ}}$ and its closure,
$\hatt{\gQ}$ (cf.\ \cite{HM01} for details),
\begin{align}
\begin{split}
\dot {\hatt{\gQ}}(f,g) &= (f',g')_{L^2(\bbR; dx)} - (f', q_2 g)_{L^2(\bbR; dx)}
- (q_2 f, g')_{L^2(\bbR; dx)}     \\
& \quad + \big(|q_1|^{1/2} f, \sgn(q_1) |q_1|^{1/2} g\big)_{L^2(\bbR; dx)},
\quad  f, g \in \dom\big(\dot {\hatt{\gQ}}\big) = C_0^{\infty}(\bbR),
\end{split} \\
\begin{split}
\hatt{\gQ}(f,g) &= (f',g')_{L^2(\bbR; dx)} - (f', q_2 g)_{L^2(\bbR; dx)}
- (q_2 f, g')_{L^2(\bbR; dx)}     \\
& \quad + \big(|q_1|^{1/2} f, \sgn(q_1) |q_1|^{1/2} g\big)_{L^2(\bbR; dx)},
\quad  f, g \in \dom\big(\hatt{\gQ}\big) = H^1(\bbR).
\end{split}
\end{align}
Since
\begin{equation}
\gQ_{T_1} (f,g) = \hatt{\gQ}(f,g) = (f',g')_{L^2(\bbR; dx)} + q(\ol f g), \quad
f, g \in C_0^{\infty} (\bbR),
\end{equation}
and $C_0^{\infty}(\bbR)$ is a form core for $\gQ_{T_1}$ (cf.\ \eqref{3.24}) and $\hatt{\gQ}$,
one concludes that $\gQ_{T_1} = \hatt{\gQ}$ and hence
\begin{equation}
\dom(\gQ_{T_1}) = \dom(A_0) = \dom\big(\hatt{\gQ}\big) = H^1(\bbR).    \lb{3.36}
\end{equation}
A comparison of \eqref{3.22} (with $\alpha$ replaced by $\alpha_0$) and \eqref{3.36} implies that
$\phi_0 g \in L^2(\bbR; dx)$ for $g \in \dom(A_0^{}) = H^1(\bbR)$, and hence,
\begin{equation}
\dom(\phi_0) \supseteq H^1(\bbR).      \lb{3.37}
\end{equation}
An application of Theorem \ref{t2.1}\,$(i), (ii)$ then finally yields
\begin{equation}
\phi_0 \in L^2_{\locunif} (\bbR; dx),     \lb{3.38}
\end{equation}
which together with \eqref{3.30} implies item $(i)$.

Finally, given $\phi \in L^2_{\locunif}(\bbR; dx)$, $\phi$ real-valued, such that $q = \phi^2 - \phi'$,
one computes, with $\alpha = (d/dx) + \phi$,
\begin{align}
\begin{split}
0 \leq \|\alpha f\|_{L^2(\bbR; dx)}^2 = \|f'\|_{L^2(\bbR; dx)}^2 + q\big(|f|^2\big)
= {}_{H^{-1}(\bbR)}\langle (- f'' + q f), f\rangle_{H^1(\bbR)},& \\
f \in C_0^{\infty}(\bbR),&
\end{split}
\end{align}
and hence item $(i)$ implies item $(ii)$.
\end{proof}

Thus, Theorem \ref{t3.6} further illustrates the results by Bak and
Shkalikov \cite{BS02} and Maz'ya and Verbitsky \cite{MV05} (specialized to the 
one-dimensional situation) recorded in Theorem \ref{t3.1} in the particular case 
where $q$ is real-valued.

In connection with Theorem \ref{t3.6}\,$(i)$, we also recall the following useful result:

\begin{lemma} $($\cite{HM12}.$)$ \lb{l3.7}
Assume that $q \in H^{-1}_{\locunif}(\bbR)$ is real-valued and of the form
$q = \phi^2 - \phi'$ for some real-valued $\phi \in L^2_{\loc}(\bbR; dx)$. Then, actually,
\begin{equation}
\phi \in L^2_{\locunif}(\bbR; dx).
\end{equation}
\end{lemma}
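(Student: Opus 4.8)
The plan is to show that the hypothesis $q = \phi^2 - \phi'$ forces the form generated by $q$ to be controlled by $T_0$ in the uniform local sense, and then to invoke Theorem~\ref{t2.1} to upgrade $\phi \in L^2_{\loc}$ to $\phi \in L^2_{\locunif}$. Concretely, since $q \in H^{-1}_{\locunif}(\bbR)$ is real-valued, Theorem~\ref{t3.1} (in fact the equivalence of \eqref{3.36a} and \eqref{3.38a}) tells us that $q$ is infinitesimally form bounded with respect to $T_0$, so for every $\varepsilon>0$ there is $C_\varepsilon>0$ with $|\gQ_q(f,f)| \le \varepsilon \|f'\|_{L^2(\bbR;dx)}^2 + C_\varepsilon \|f\|_{L^2(\bbR;dx)}^2$ for all $f \in C_0^\infty(\bbR)$. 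On the other hand, for $f \in C_0^\infty(\bbR)$ one computes directly, using $q = \phi^2 - \phi'$ and integration by parts (legitimate since $\phi \in L^2_{\loc}$, hence $\phi f, \phi^2 f \in L^1_{\loc}$ with compact support),
\begin{equation}
\gQ_q(f,f) = \int_\bbR dx\, \big[|\phi(x)|^2 |f(x)|^2 - \phi'(x)|f(x)|^2\big]
= \|\phi f\|_{L^2(\bbR;dx)}^2 + \int_\bbR dx\, \phi(x)\big[\ol{f(x)}f'(x) + \ol{f'(x)}f(x)\big],
\end{equation}
which rearranges to $\gQ_q(f,f) = \|f' + \phi f\|_{L^2(\bbR;dx)}^2 - \|f'\|_{L^2(\bbR;dx)}^2$. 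Hence $\|\alpha f\|_{L^2(\bbR;dx)}^2 = \|f'\|_{L^2(\bbR;dx)}^2 + \gQ_q(f,f)$ with $\alpha = (d/dx) + \phi$.

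Now I would combine the two facts. Taking $\varepsilon = 1/2$ (say) in the infinitesimal form bound gives
\begin{equation}
\|\phi f\|_{L^2(\bbR;dx)}^2 \le \|\alpha f\|_{L^2(\bbR;dx)}^2 + \|f'\|_{L^2(\bbR;dx)}^2
= \|f'\|_{L^2(\bbR;dx)}^2 + \gQ_q(f,f) + \|f'\|_{L^2(\bbR;dx)}^2 - \big(\text{wait, recheck}\big),
\end{equation}
so more carefully: from $\|\phi f\|^2 = \gQ_q(f,f) - \int \phi(\ol f f' + \ol{f'}f)\,dx$ one can alternatively just bound $\|\phi f\|^2$ via the triangle inequality $\|\phi f\| \le \|\alpha f\| + \|f'\|$ applied to $\phi f = \alpha f - f'$, giving $\|\phi f\|_{L^2(\bbR;dx)}^2 \le 2\|\alpha f\|_{L^2(\bbR;dx)}^2 + 2\|f'\|_{L^2(\bbR;dx)}^2 = 2\|f'\|^2 + 2\gQ_q(f,f) + 2\|f'\|^2$. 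Plugging in the infinitesimal form bound for $\gQ_q(f,f)$ with any fixed $\varepsilon$ (e.g.\ $\varepsilon = 1$) then yields a genuine relative form bound
\begin{equation}
\|\phi f\|_{L^2(\bbR;dx)}^2 \le C\big[\|f'\|_{L^2(\bbR;dx)}^2 + \|f\|_{L^2(\bbR;dx)}^2\big],
\quad f \in C_0^\infty(\bbR),
\end{equation}
for some $C>0$. Since $C_0^\infty(\bbR)$ is dense in $H^1(\bbR)$ and $\phi \in L^2_{\loc}$, a Fatou argument extends this to all $f \in H^1(\bbR)$, so $\dom(\phi) \supseteq H^1(\bbR) = \dom\big(T_0^{1/2}\big)$; equivalently the multiplication operator $\phi$ is $T_0^{1/2}$-bounded. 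Theorem~\ref{t2.1}, namely the equivalence of \eqref{2.2} (with $w = \phi$) and \eqref{2.3}, then immediately gives $\phi \in L^2_{\locunif}(\bbR;dx)$, which is the claim.

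The only genuinely delicate point is the passage from the estimate on $C_0^\infty(\bbR)$ to the inclusion $\dom(\phi) \supseteq H^1(\bbR)$: one must know that if $f \in H^1(\bbR)$ and $f_n \to f$ in $H^1(\bbR)$ with $f_n \in C_0^\infty(\bbR)$, then $\phi f_n$ converges in $L^2(\bbR;dx)$ to something which must be $\phi f$ (the latter a priori only a locally integrable function). This follows because $f_n \to f$ locally uniformly (Sobolev embedding $H^1(\bbR) \hookrightarrow C(\bbR)$), so $\phi f_n \to \phi f$ in $L^2_{\loc}(\bbR;dx)$, while the uniform bound forces the full $L^2(\bbR;dx)$ limit to agree; hence $\phi f \in L^2(\bbR;dx)$ and the estimate persists. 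Everything else is routine, and in fact this gives a short self-contained alternative to the proof of Lemma~\ref{l3.7} in \cite{HM12}: the role of the Miura structure is precisely to produce, via $\phi f = \alpha f - f'$, the needed $T_0^{1/2}$-boundedness of $\phi$ out of the $H^{-1}_{\locunif}$ form-boundedness of $q$. I would also remark that the argument shows nothing is lost by assuming only $\phi \in L^2_{\loc}$ a priori, since the conclusion then feeds back (via Theorem~\ref{t3.6}) to the same class of potentials $q$.
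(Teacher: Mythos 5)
Your argument is correct. Note first that the paper gives no proof of Lemma \ref{l3.7} at all --- it is quoted from \cite{HM12} --- but essentially the same statement is re-derived inside the proof of Theorem \ref{t3.6}, and that is the natural point of comparison. There the route is a form-core identification: one takes the closed form $\gQ_{T_1}(f,g)=(A_0f,A_0g)_{L^2(\bbR;dx)}$ with $\dom(\gQ_{T_1})=\dom(A_0)$, the closed form $\hatt{\gQ}$ on $H^1(\bbR)$ built from a decomposition $q=q_1+q_2'$ with $q_j\in L^j_{\locunif}(\bbR;dx)$, observes that the two agree on the common core $C_0^\infty(\bbR)$, concludes $\dom(A_0)=H^1(\bbR)$, hence $\phi g=\alpha_0 g-g'\in L^2(\bbR;dx)$ for all $g\in H^1(\bbR)$, and finishes with Theorem \ref{t2.1}\,$(i),(ii)$. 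You reach the same inclusion $\dom(\phi)\supseteq H^1(\bbR)$ by a direct estimate instead: the identity $\gQ_q(f,f)=\|\alpha f\|^2_{L^2(\bbR;dx)}-\|f'\|^2_{L^2(\bbR;dx)}$, the triangle inequality applied to $\phi f=\alpha f-f'$, and the form bound supplied by Theorem \ref{t3.1}, followed by the same appeal to Theorem \ref{t2.1}. What your version buys is that it bypasses the nontrivial input \eqref{3.24} (that $C_0^\infty(\bbR)$ is an operator core for $A$, cited from \cite{KPST05}), replacing the core identification by an elementary density and local-uniform-convergence argument; both versions still ultimately rest on the decomposition $q=q_1+q_2'$ of Theorem \ref{t3.1} and on the equivalence \eqref{2.2}$\Leftrightarrow$\eqref{2.3}. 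The only blemish is the aborted display containing ``wait, recheck,'' which should simply be deleted, since the subsequent triangle-inequality computation supersedes it.
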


\begin{remark} \lb{r3.8}
Combining \eqref{3.17}--\eqref{3.19}, \eqref{3.25}, \eqref{3.26}, \eqref{3.31}, and \eqref{3.37}
(identifying $\phi$ and $\phi_0$ as well as $T$ and $T_1$) then yields the following apparent improvement over the domain characterizations \eqref{3.18}, \eqref{3.25},
\begin{align}
& T_1 f = \tau_1 f, \quad \tau f = - (f' + \phi f)' + \phi (f' + \phi f),    \no \\
& f \in \dom(T_1) = \big\{g \in L^2(\bbR; dx) \,\big|\, g, \alpha g \in AC_{\loc}(\bbR),
g', \phi g \in L^2(\bbR; dx),       \lb{3.39} \\
& \hspace*{8.1cm} \tau_1 g \in L^2(\bbR; dx)\big\},      \no
\end{align}
with \eqref{3.26} staying in place. In fact, \eqref{3.25} and \eqref{3.39} are, of course,  equivalent; the former represents a minimal characterization of $\dom(T_1)$.
\end{remark}

\begin{remark} \lb{r3.9}
Given $q = \phi^2 - \phi'$, $\phi \in L^2_{\locunif}(\bbR; dx)$ as in Theorems \ref{t3.4} -- \ref{t3.6},
the question of uniqueness of $\phi$ for prescribed $q \in H^{-1}_{\loc}(\bbR)$ arises naturally.
This has been settled in \cite{KPST05} and so we briefly summarize some pertinent facts. Since
$\phi = - \psi'/\psi$ for some $0 < \psi \in H^1_{\loc}(\bbR)$, uniqueness of $\phi$ is equivalent to
uniqueness of $\psi > 0$ satisfying $[(- d^2/dx^2) + q]\psi = 0$. Thus, suppose
$0 < \psi_0 \in H^1_{\loc}(\bbR)$ is a solution of $[(- d^2/dx^2) + q]\psi = 0$. Then, the general,
real-valued solution of $[(- d^2/dx^2) + q]\psi = 0$ is of the type
\begin{equation}
\psi(x) = C_1 \psi_0(x) + C_2 \psi_0(x) \int_{0}^x dx' \, \psi_0(x')^{-2}, \quad x \in \bbR, \;
C_j \in \bbR, \; j=1,2.
\end{equation}
Next, introducing
\begin{equation}
c_{\pm} = \pm \lim_{x \to \pm \infty} \int_0^x dx' \, \psi_0(x')^{-2} \in (0, + \infty],
\end{equation}
and defining $c_{\pm}^{-1} = 0$ if $c_{\pm} = + \infty$, all positive solutions
$0 < \psi$ on $\bbR$ of $[(- d^2/dx^2) + q]\psi = 0$ are given by
\begin{equation}
\psi(x) = \psi_0(x) \bigg[1 + c \int_{0}^x dx' \, \psi_0(x')^{-2} \bigg], \quad
c \in \big[- c_+^{-1}, c_-^{-1}\big].    \lb{3.44}
\end{equation}
Consequently,
\begin{align}
\begin{split}
& 0 < \psi_0 \in H^1_{\loc}(\bbR) \, \text{ is the unique solution of } \, [(- d^2/dx^2) + q]\psi = 0 \\
&\quad \text{if and only if } \, \pm \lim_{x \to \pm \infty} \int_{0}^x dx' \, \psi_0(x')^{-2} = \infty.
\end{split}
\end{align}
On the other hand, if at least one of $ \pm \lim_{x \pm \infty} \int_{0}^x dx' \, \psi_0(x')^{-2} < \infty$,
$[(- d^2/dx^2) + q]\psi = 0$ has a one (real) parameter family of positive solutions on $\bbR$
lying in $H^1_{\loc}(\bbR)$ given by \eqref{3.44}. Without going into further details, we note
that Weyl--Titchmarsh solutions $\psi_{\pm}(\lambda,\cdot)$ corresponding to $T$ in
\eqref{3.18} for energies  $\lambda < \inf(\sigma(T))$, are actually constant multiples of Hartman's principal solutions $T \hatt \psi_{\pm} (\lambda, \cdot) = \lambda \hatt \psi_{\pm}(\lambda, \cdot)$,
that is, those that satisfy
$ \pm \int^{\pm \infty} dx' \, \big[\hatt \psi_{\pm}(\lambda,x')\big]^{-2} = \infty$.
\end{remark}

\begin{theorem} \lb{t3.10} Assume that $q \in H^{-1}_{\loc} (\bbR)$ is real-valued and suppose
in addition that $(- d^2/dx^2) + q \geq 0$ in the sense of distributions $($cf.\ \eqref{3.48}$)$.
 Then the following conditions $(i)$--$(iv)$ are equivalent: \\[1mm]
$(i)$ $q$ is form compact with respect to $T_0$, that is, the map
\begin{equation}
q \colon H^1(\bbR) \to H^{-1}(\bbR) \, \text{ is compact.}    \lb{3.45a}
\end{equation}
$(ii)$ $q$ is of the form $q = \phi^2 - \phi'$, where $\phi \in L^2_{\locunif}(\bbR; dx)$ is
real-valued and
\begin{equation}
\lim_{|a|\to\infty} \bigg(\int_a^{a+1} dx \, \phi(x)^2\bigg) = 0.  \lb{3.45b}
\end{equation}
$(iii)$ The operator of multiplication by $\phi$ is $T_0^{1/2}$-compact. \\[1mm]
$(iv)$ The operator of multiplication by $\phi$ is $P_m\big(T_0^{1/2}\big)$-compact, where
$P_m$ is a polynomial of degree $m\in\bbN$.
\end{theorem}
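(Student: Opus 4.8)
The plan is to prove the equivalences in the order $(ii)\Leftrightarrow(iii)\Leftrightarrow(iv)$, then $(ii)\Rightarrow(i)$, then $(i)\Rightarrow(ii)$, the last implication being the only one requiring genuine work. Under the present hypotheses Theorem \ref{t3.4} furnishes a real-valued $\phi = -\psi_0'/\psi_0 \in L^2_{\loc}(\bbR; dx)$ with $q = \phi^2 - \phi'$, where $0 < \psi_0 \in H^1_{\loc}(\bbR)$ solves $[(-d^2/dx^2)+q]\psi_0 = 0$, and conditions $(ii)$--$(iv)$ are read relative to this $\phi$ (its membership in $L^2_{\locunif}(\bbR; dx)$ being asserted in $(ii)$ and, under $(i)$, following from Theorem \ref{t3.6} and Lemma \ref{l3.7}). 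Granting this, $(ii)\Leftrightarrow(iii)\Leftrightarrow(iv)$ follows immediately from Theorem \ref{t2.2} applied with $w = \phi$: condition \eqref{3.45b} is precisely item $(iii)$ there, hence equivalent to the operator of multiplication by $\phi$ being $T_0^{1/2}$-compact, and, by the last assertion of Theorem \ref{t2.2}, equivalent to it being $P_m\big(T_0^{1/2}\big)$-compact. For $(ii)\Rightarrow(i)$ one feeds the Miura representation into Theorem \ref{t3.2}: with $q_1 := \phi^2 \in L^1_{\locunif}(\bbR; dx)$ and $q_2 := -\phi \in L^2_{\locunif}(\bbR; dx)$ one has $q = q_1 + q_2'$, while \eqref{3.45b} supplies exactly the decay conditions $\lim_{|a|\to\infty}\int_a^{a+1}dx\,|q_1(x)| = \lim_{|a|\to\infty}\int_a^{a+1}dx\,|q_2(x)|^2 = 0$ required in Theorem \ref{t3.2}$(ii)$, whence $q$ is form compact relative to $T_0$, i.e.\ $(i)$ holds.

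The substantive direction is $(i)\Rightarrow(ii)$. Starting from $q$ form compact, Theorem \ref{t3.2} produces a decomposition $q = q_1 + q_2'$ with $q_j \in L^j_{\locunif}(\bbR; dx)$ and $\int_a^{a+1}dx\,|q_1(x)| \to 0$, $\int_a^{a+1}dx\,|q_2(x)|^2 \to 0$ as $|a|\to\infty$; in particular $q \in H^{-1}_{\locunif}(\bbR)$, so Theorem \ref{t3.6} (and Lemma \ref{l3.7}) gives $\phi \in L^2_{\locunif}(\bbR; dx)$, and it remains to establish \eqref{3.45b}. The key device is to introduce $h := \phi + q_2 \in L^2_{\locunif}(\bbR; dx)$. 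Using $\phi' = \phi^2 - q = \phi^2 - q_1 - q_2'$ one obtains $h' = \phi^2 - q_1 \in L^1_{\locunif}(\bbR; dx)$, so $h \in AC_{\loc}(\bbR)$, and substituting $\phi = h - q_2$ converts this into the Riccati-type equation
\begin{equation}
h' = h^2 + g, \qquad g := q_2^2 - 2 h q_2 - q_1 .
\end{equation}
A mean-value-and-oscillation estimate ($h$ attains on each unit interval a value no larger than $\|h\|_{L^2_{\locunif}(\bbR; dx)}$ and oscillates there by at most $\|h'\|_{L^1_{\locunif}(\bbR; dx)}$) yields $h \in L^\infty(\bbR)$; then the decay of $q_1$ and $q_2$ forces $\int_a^{a+1}dx\,|g(x)| \to 0$ as $|a|\to\infty$.

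The heart of the matter, and the step I expect to be the main obstacle, is the ODE fact that any $h \in L^\infty(\bbR)\cap AC_{\loc}(\bbR)$ solving $h' = h^2 + g$ with $\int_a^{a+1}dx\,|g(x)| \to 0$ as $|a|\to\infty$ must satisfy $h(x)\to 0$ as $|x|\to\infty$. I would prove this by a blow-up/comparison argument: if $h(x_0) \ge 2\delta > 0$ at a sufficiently distant point $x_0$, then as long as $h$ stays $\ge \delta$ one has $h(t) \ge 2\delta + \delta^2(t-x_0) - \int_{x_0}^{t}dx\,|g(x)|$; since $\int_{x_0}^{x_0+L}dx\,|g(x)|$ can be made arbitrarily small for $x_0$ large, $h$ cannot fall back below $\delta$ and therefore exceeds $\|h\|_{L^\infty(\bbR)}$ on an interval of fixed length $L$ chosen with $\delta^2 L > \|h\|_{L^\infty(\bbR)}$, a contradiction; the sign-reversed case and the case $x\to-\infty$ are handled symmetrically, the corresponding ``exit point'' estimate again forcing $h$ to be unbounded. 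The delicate bookkeeping is to arrange the comparison so that $|g|$ is only integrated over intervals of bounded length, where the hypothesis genuinely makes it small, and never over a half-line. Granting this, $h(x) \to 0$ as $|x|\to\infty$, and from $\phi = h - q_2$ together with $\int_a^{a+1}dx\,|q_2(x)|^2 \to 0$ one concludes $\int_a^{a+1}dx\,\phi(x)^2 \le 2\int_a^{a+1}dx\,h(x)^2 + 2\int_a^{a+1}dx\,|q_2(x)|^2 \to 0$, which is \eqref{3.45b} and completes the proof.
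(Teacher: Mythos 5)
Your proposal is correct, and for the key implication $(i)\Rightarrow(ii)$ it takes a genuinely different --- and more complete --- route than the paper. The paper's own proof obtains $\phi\in L^2_{\locunif}(\bbR;dx)$ from Theorem \ref{t3.4} and Lemma \ref{l3.7}, handles $(ii)\Leftrightarrow(iii)$ via Theorem \ref{t2.2} exactly as you do, and then disposes of $(i)\Leftrightarrow(ii)$ in one line by invoking Theorem \ref{t3.2} ``upon identifying $q_1=\phi^2$, $q_2=\phi$,'' so that the two limits in \eqref{3.16} collapse to \eqref{3.45b}. That reasoning does give $(ii)\Rightarrow(i)$, exactly as in your write-up, but for the converse Theorem \ref{t3.2} only furnishes \emph{some} decaying decomposition $q=q_1+q_2'$, not the Miura one, and the paper is silent on why the decay is inherited by $\phi$. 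Your Riccati device $h=\phi+q_2$, $h'=\phi^2-q_1=h^2+g$ with $\int_a^{a+1}dx\,|g(x)|\to0$, together with the boundedness of $h$ (from $h\in L^2_{\locunif}(\bbR;dx)$ and $h'\in L^1_{\locunif}(\bbR;dx)$) and the forward/backward comparison showing that a bounded, locally absolutely continuous solution of $h'=h^2+g$ must vanish at $\pm\infty$, supplies exactly this missing bridge. Your insistence on integrating $|g|$ only over intervals of a fixed length $L$ with $\delta^2L>\|h\|_{L^\infty(\bbR;dx)}$ is the right precaution, and the backward integration from a point where $h\le-2\delta$ does handle the sign-reversed case, since the comparison interval stays at bounded distance from the starting point and hence within the region where the unit-interval integrals of $|g|$ are small. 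In short, where the paper's proof of $(i)\Rightarrow(ii)$ is essentially a citation, yours is a self-contained elementary argument that actually closes that step; the cost is a page of ODE estimates that the paper elides.
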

\begin{proof}
By Theorem \ref{t3.4}, $(- d^2/dx^2) + q \geq 0$ in the sense of distributions implies that
$q$ is of the form $q = \phi^2 - \phi'$ for some real-valued $\phi \in L^2_{\loc} (\bbR)$. By
Lemma \ref{l3.7}, one actually concludes that $\phi \in L^2_{\locunif} (\bbR)$. The equivalence
of items $(i)$ and $(ii)$ then follows from Theorem \ref{t3.2}
since upon identifying $q_1 = \phi^2$, $q_2 = \phi$, the two limiting relations in \eqref{3.16} are
equivalent to \eqref{3.45b}. Equivalence of condition \eqref{3.45b} and item $(iii)$ is guaranteed
by Theorem \ref{t2.2}.
\end{proof}

At this point it is worth recalling a few additional details of the supersymmetric formalism started
in \eqref{3.20}--\eqref{3.26}, whose abstract roots can be found in Appendix \ref{sB}:
Assuming $\phi \in L^2_{\locunif}(\bbR; dx)$ to be real-valued
(we note, however, that this supersymmetric formalism extends to the far more general situation
where $\phi \in L^1_{\loc}(\bbR; dx)$ is real-valued, in fact, it extends to the situation where $\phi$
is matrix-valued, see \cite{EGNT12} for a detailed treatment of these matters), one has
\begin{align}
& A = (d/dx) + \phi, \quad A^* = - (d/dx) + \phi,   \quad
\dom(A) = \dom(A^*) = H^1(\bbR),    \lb{3.46} \\
& T_1 = A^* A = - (d^2/dx^2) + V_1, \quad V_1 = \phi^2 - \phi',     \lb{3.47} \\
& T_2 = A A^* = - (d^2/dx^2) + V_2, \quad V_2 = \phi^2 + \phi',     \lb{3.70} \\
& D = \begin{pmatrix} 0 & A^* \\ A & 0 \end{pmatrix}
\, \text{ in $L^2(\bbR; dx) \oplus L^2(\bbR; dx)$,}     \lb{3.49} \\
& D^2 = \begin{pmatrix} A^* A & 0 \\ 0 & A A^* \end{pmatrix}
= T_1 \oplus T_2 \, \text{ in $L^2(\bbR; dx) \oplus L^2(\bbR; dx)$.}    \lb{3.50}
\end{align}
As a consequence, one can show (cf.\ \cite{EGNT12}) the Weyl--Titchmarsh solutions, $\psi_{\pm}$,
for $D, T_1, T_2$ satisfy
\begin{align}
& \psi_{D, 1, \pm} (\zeta,x) = \psi_{T_1, \pm} (z,x), \quad z = \zeta^2, \; \zeta \in \bbC\backslash\bbR, \\
& \psi_{T_2, \pm} (z,x)
= c_1(z) (A \psi_{T_1, \pm}) (z,x),
\end{align}
with $c_1(z)$ a normalization constant. Similarly, after interchanging the role of $T_1$ and $T_2$,
\begin{align}
& \psi_{D, 2, \pm} (\zeta,x) = \psi_{T_2, \pm} (z,x), \quad z = \zeta^2, \; \zeta \in \bbC\backslash\bbR, \\
& \psi_{T_1, \pm} (z,x)
= c_2(z) (A^* \psi_{T_2, \pm}) (z,x),
\end{align}
again with $c_2(z)$ a normalization constant. Here,
\begin{equation}
\Psi_{D,\pm} (\zeta,x) = \begin{pmatrix} \psi_{D, 1, \pm} (\zeta,x) \\
\psi_{D, 2, \pm} (\zeta,x) \end{pmatrix}
\end{equation}
are the Weyl--Titchmarsh solutions for
$D = \left(\begin{smallmatrix} 0 & A^* \\ A & 0 \end{smallmatrix}\right)$.

The (generalized, or renormalized) Weyl--Titchmarsh $m$-functions for $D, T_1, T_2$ satisfy:
\begin{equation}
m_{D,\pm} (\zeta,x_0) = \f{1}{\zeta} \hatt m_{T_1,\pm} (z,x_0)
= \f{-\zeta}{\hatt m_{T_2,\pm} (z,x_0)},
\end{equation}
where $x_0$ is a fixed reference point (typically, $x_0 = 0$), and
\begin{align}
& \hatt m_{T_1,\pm} (z,x_0)
= \f{\psi_{T_1, \pm}^{[1,1]} (z,x_0)}{\psi_{T_1, \pm} (z,x_0)}
= \f{(A \psi_{T_1, \pm}) (z,x_0)}{\psi_{T_1, \pm} (z,x_0)},     \lb{3.57} \\
& \hatt m_{T_2,\pm} (z,x_0)
= \f{\psi_{T_2, \pm}^{[1,2]} (z,x_0)}{\psi_{T_2, \pm} (z,x_0)}
= \f{(- A^* \psi_{T_2, \pm}) (z,x_0)}{\psi_{T_2, \pm} (z,x_0)}.     \lb{3.58}
\end{align}
Here, $y^{[1,1]} = A y = [y' + \phi y]$ is the quasi-derivative corresponding to $T_1$ and
$y^{[1,2]} = - A^* y = [y' - \phi y]$ is the quasi-derivative corresponding to $T_2$.

Thus, spectral properties of $D$ instantly translate into spectral properties of $T_j$, $j=1,2$,
and vice versa (the latter with the exception of the zero spectral parameter). In particular,
$\phi \in L^2_{\locunif}(\bbR; dx) \subset L^2_{\loc}(\bbR; dx)$ in $D$ is entirely ``standard'' (in
fact, even $\phi \in L^1_{\loc}(\bbR; dx)$ in $D$ is entirely standard, see, e.g., \cite{CG02} and the
extensive literature cited therein), while the potentials $V_j = (-1)^j \phi' + \phi^2$, $j=1,2$, involve 
the distributional coefficient $\phi' \in H^{-1}_{\locunif} (\bbR)$. (We also note that while in this paper the Dirac operator $D$ only involves the $L^2_{\loc}(\bbR; dx)$-coefficient $\phi$, Dirac-type operators with distributional potentials have been studied in the literature, see, for instance \cite[App.\ J]{AGHKH05}  
and \cite{CMP13}.) In particular, spectral results for the ``standard'' one-dimensional
Dirac-type operator $D$ imply corresponding spectral results for Schr\"odinger operators bounded
from below, with (real-valued) distributional potentials. Some applications of this spectral
correspondence between $D$ and $T_j$, $j=1,2$, to inverse spectral theory, local Borg--Marchenko uniqueness results, etc., were treated in \cite{EGNT12}. In Section \ref{s4} we will apply this
spectral correspondence to derive some Floquet theoretic results in connection with the Schr\"odinger operators $T_j$ and hence for the distributional potentials
$[\phi^2 + (-1)^j \phi'] \in H^{-1}_{\locunif}(\bbR)$, $j=1,2$.

\begin{remark} \lb{r3.12}
For simplicity we restricted ourselves to the special case $p=1$ in Theorems \ref{t3.4}--\ref{t3.6}
and Remarks \ref{r3.8} and \ref{r3.9}. However, assuming
\begin{equation}
0 < p, p^{-1} \in L^{\infty}(\bbR; dx), \quad 0 < r, r^{-1} \in L^{\infty}(\bbR; dx),
\end{equation}
the observations thus far in this section extend to the case where
\begin{align}
\begin{split}
\tau_1 f = \alpha^+ \alpha f &= - f'' + \big[\phi^2 - \phi' \big] f    \\
&= - (f' + \phi f)' + \phi(f' + \phi f)
\end{split}
\end{align}
is replaced by
\begin{align}
\begin{split}
\tau_1 f = \beta^+ \beta f &= r^{-1} \Big[- (p f')' + \big[p \phi^2 - (p\phi)' \big]f\Big]     \\
&= r^{-1} \Big[- [p(f' + \phi f)]' + \phi [p (f' + \phi f)]\Big],
\end{split}
\end{align}
where
\begin{align}
\begin{split}
\beta f &= (pr)^{-1/2} [p(f' + \phi f)],      \\
\beta^+ f &= - (pr)^{-1} \Big\{p \Big[\big[(pr)^{1/2} f\big]' - \phi \big[(pr)^{1/2} f\big]\Big]\Big\}.
\end{split}
\end{align}
\end{remark}

\begin{remark} \lb{r3.13} We only dwelled on
\begin{equation}
\dom\big(|T|^{1/2}\big) = H^1(\bbR)
\end{equation}
to derive a number of {\it if and only if} results. For practitioners in this field, the sufficient
conditions on $q, r$ in terms of the $L^j_{\locunif}(\bbR; dx)$, $j=1,2$, and boundedness
conditions on $0 < p, p^{-1}$, yielding form boundedness (i.e., self-adjointness) results, relative compactness, and trace class results, all work as long as one ensures
\begin{equation}
\dom\big(|T|^{1/2}\big) \subseteq H^1(\bbR).
\end{equation}
This permits larger classes of coefficients $p, q, r$ for which one can prove these types of self-adjointness and spectral results.
\end{remark}

\medskip

Before returning to our principal object, the Birman--Schwinger-type operator $\ul{T^{-1/2} r T^{-1/2}}$,
we briefly examine the well-known example of point interactions:

\begin{example} \lb{e3.14} $($Delta distributions.$)$
\begin{equation}
q_1(x) = 0, \quad q_2 (x) = \begin{cases} 1, & x > x_0, \\
0, & x < x_0, \end{cases} \quad \text{then} \quad
q = q_2' = \delta_{x_0}, \quad x_0 \in \bbR.
\end{equation}
Introducing the operator
\begin{align}
& \, A_{\alpha, x_0} = \f{d}{dx} - \f{\alpha}{2} \begin{cases} \;\;\, 1, & x>x_0, \\ -1, & x<x_0,
\end{cases} \quad  \text{ that is, } \,
\phi(x) = \f{\alpha}{2} \sgn(x-x_0), \quad \alpha, x_0 \in \bbR, \;   \no \\
& \dom(A_{\alpha, x_0}) = H^1(\bbR),
\end{align}
in $L^2(\bbR; dx)$, one infers that 
\begin{equation}
A_{\alpha, x_0}^* A_{\alpha, x_0} = - \Delta_{\alpha, x_0} + (\alpha^2/4) I. 
\end{equation}
Here $- \Delta_{\alpha, x_0} = - d^2/dx^2 + \alpha \, \delta_{x_0}$ in $L^2(\bbR; dx)$ represents 
the self-adjoint realization of the one-dimensional point interaction $($cf.\ 
\cite[Ch.\ I.3]{AGHKH05}$)$, that is, 
the Schr\"odinger operator with a delta function potential of strength $($coupling 
constant\,$)$ $\alpha$ centered at $x_0 \in \bbR$.
\end{example}
This extends to sums of delta distributions supported on a discrete set
(Kronig--Penney model, etc.).

Next we apply this distributional approach to the Birman--Schwinger-type operator
$\ul{T^{-1/2} r T^{-1/2}}$. We outline the basic ideas in the following three steps: \\[1mm]
{\bf Step 1.} Assume $p, p^{-1} \in L^{\infty}(\bbR; dx)$, $p>0$ a.e.\ on $\bbR$. \\[1mm]
{\bf Step 2.} Suppose $q = q_1 + q_2'$, where $q_j \in L^j_{\locunif}(\bbR; dx)$, $j=1,2$,
are real-valued. This uniquely defines a self-adjoint operator $T$ in $L^2(\bbR; dx)$, bounded
from below, $T \geq c I$ for some $c \in \bbR$, as the form sum $T = - (d/dx) p (d/dx) + q$
of $-(d/dx) p (d/dx)$ and the distribution $q = q_1 + q_2' \in \cD^{\prime}(\bbR)$. Then
\begin{equation}
\dom\big(|T|^{1/2}\big) = H^1(\bbR).
\end{equation}

If in addition, $\lim_{|a|\to\infty} \Big(\int_a^{a+1} dx \, |q_1(x) - c_1|\Big) = 0$
for some constant $c_1 \in \bbR$, and \;
$\lim_{|a|\to\infty} \Big(\int_a^{a+1} dx \, |q_2(x)|^2\Big) = 0$, one again obtains results on essential
spectra. \\[1mm]
{\bf Step 3.} Suppose without loss of generality, that $T \geq c I$, $c > 0$, and introduce
$r = r_1 + r_2'$, $r_j \in L^j_{\locunif}(\bbR; dx)$ real-valued, $j=1,2$.
This uniquely defines a bounded self-adjoint operator
$\ul{T^{-1/2} r T^{-1/2}}$ in $L^2(\bbR; dx)$ as described next: First write
\begin{align}
& \ul{T^{-1/2} r T^{-1/2}}    \\
& \quad = \big[(T_0 + I)^{1/2} T^{-1/2}\big]^*
\big[(T_0 + I)^{-1/2} r (T_0 + I)^{-1/2}\big]
\big[(T_0 + I)^{1/2} T^{-1/2}\big].   \no
\end{align}
Next, one interprets $(T_0 + I)^{-1/2} r (T_0 + I)^{-1/2}$ as follows: Employing $T_0$ and its
extension, $\wti T_0$, to the entire Sobolev scale $H^s(\bbR)$ in \eqref{3.11}--\eqref{3.14c},
in particular, we will employ the mapping properties,  $\big(\wti T_0 + I\big)^{-1/2}: H^s(\bbR) \to H^{s+1}(\bbR)$,
$s \in \bbR$. Thus, using
\begin{equation}
\big[(T_0 + I)^{1/2} T^{-1/2}\big], \big[(T_0 + I)^{1/2} T^{-1/2}\big]^*
\in \cB\big(L^2(\bbR; dx)\big),
\end{equation}
and
\begin{equation}
\Big[\underbrace{\big(\wti T_0 + I\big)^{-1/2}}_{\in \cB(H^{-1}(\bbR), L^2(\bbR; dx))} \;\;
\underbrace{r}_{\in \cB(H^1(\bbR), H^{-1}(\bbR))} \;\;
\underbrace{\big(\wti T_0 + I\big)^{-1/2}}_{\in \cB(L^2(\bbR; dx), H^1(\bbR))}\Big] \in
\cB\big(L^2(\bbR; dx)\big),
\end{equation}
finally yields
\begin{align}
& \ul{T^{-1/2} r T^{-1/2}}  \no \\
& \quad = \big[(T_0 + I)^{1/2} T^{-1/2}\big]^*
\Big[\big(\wti T_0 + I\big)^{-1/2} r \big(\wti T_0 + I\big)^{-1/2}\Big]
\big[(T_0 + I)^{1/2} T^{-1/2}\big]   \no \\
& \qquad \in \cB\big(L^2(\bbR; dx)\big).      \lb{3.74}
\end{align}

Equivalently, using the analog of \eqref{C.24a} applied to $T$

Hence, our reformulated left-definite generalized eigenvalue problem becomes again a standard
self-adjoint spectral problem in $L^2(\bbR; dx)$,
\begin{equation}
\ul{T^{-1/2} r T^{-1/2}} \, \chi = \f{1}{z} \chi, \quad z \in \bbC\backslash\{0\},    \lb{3.75}
\end{equation}
associated with the bounded, self-adjoint operator $\ul{T^{-1/2} r T^{-1/2}}$ in $L^2(\bbR; dx)$,
yet this time we permit distributional coefficients satisfying
\begin{align}
& p, p^{-1} \in L^{\infty}(\bbR; dx), \; p>0 \text{ a.e.\ on } \bbR,     \\
& q = q_1 + q_2', \; q_j \in L^j_{\locunif}(\bbR; dx) \text{ real-valued,} \; j=1,2,   \\
& r = r_1 + r_2', \; r_j \in L^j_{\locunif}(\bbR; dx) \text{ real-valued,} \; j=1,2,
\end{align}
with $T$ defined as the self-adjoint, lower-semibounded operator
uniquely associated with the lower-bounded, closed sesquilinear form $\gQ_T$ in $L^2(\bbR; dx)$
given by
\begin{align}
\gQ_T(f,g) &= \big(p^{1/2} f', p^{1/2} g'\big)_{L^2(\bbR; dx)} + q(\ol f g)    \\
&= \big(p^{1/2} f', p^{1/2} g'\big)_{L^2(\bbR; dx)} - (f', q_2 g)_{L^2(\bbR; dx)}
- (q_2 f, g')_{L^2(\bbR; dx)}     \no \\
& \quad + \big(|q_1|^{1/2} f, \sgn(q_1) |q_1|^{1/2} g\big)_{L^2(\bbR; dx)},     \\
&= \big(p^{-1/2}(pf' - q_2 f), p^{-1/2}(pg' - q_2 g)\big)_{L^2(\bbR; dx)}      \\
& \quad + \big(|q_1|^{1/2}f, \sgn(q_1) |q_1|^{1/2}g\big)_{L^2(\bbR; dx)}
-  \big(p^{-1/2} q_2 f, p^{-1/2} q_2 g\big)_{L^2(\bbR; dx)},     \no \\
& \hspace*{6.65cm} f, g \in \dom(\gQ_T) = H^1(\bbR).      \no
\end{align}
In particular, $T$ corresponds to the differential expression $\tau = - (d/dx) p (d/dx) + q(x)$,
$x \in \bbR$, and hence is explicitly given by
\begin{align}
& T f = \tau f, \quad \tau f = - (pf' - q_2 f)' - p^{-1} q_2 (p f' - q_2 f) + \big(q_1 - p^{-1} q_2^2\big)f,  \no \\
& f \in \dom(T) = \big\{g \in L^2(\bbR; dx) \,\big| \, g, (pg' - q_2 g) \in AC_{\loc}(\bbR), \,
\tau g \in  L^2(\bbR; dx)\big\}.    \no \\
& \hspace*{1.88cm} = \big\{g \in L^2(\bbR; dx) \,\big| \, g, (pg' - q_2 g) \in AC_{\loc}(\bbR),
\, \tau g \in L^2(\bbR; dx)   \no \\
& \hspace*{4.85cm} (pg' - q_2 g) \in L^2(\bbR; dx)\big\}.    \lb{3.81}
\end{align}
Without loss of generality we assume $T \geq c I$ for some $c>0$.

\section{The Case of Periodic Coefficients}   \label{s4}

In this section we apply some of the results collected in Sections \ref{s2} and \ref{s3} to the
special, yet important, case where all coefficients are periodic with a fixed period. For simplicity,
we will choose $p=1$ throughout, but we emphasize that including the nonconstant, periodic
coefficient $p$ can be done in a standard manner as discussed in Remark \ref{r3.12}. It is
not our aim to present a thorough treatment of Floquet theory, rather, we intend to illustrate
some of the scope underlying the approach developed in this paper.

One recalls that $q\in H^{-1}_{\loc}(\bbR)$ is called {\it periodic with period $\omega > 0$} if 
\begin{equation} 
\dpl q, f(\cdot-\omega) \dpr q = \dpl q, f \dpr, \quad f \in H^1(\bbR). 
\end{equation} 
By \eqref{3.29}, if $q\in H^{-1}_{\loc}(\bbR)$ is periodic, it can be written as $q=q_1+q_2'$, 
where $q_1$ is a constant and 
$q_2\in L^2_{\locunif}(\bbR; dx)$ is periodic with period $\omega$. The analogous statement 
applies, of course, to the coefficient $r$ in the differential equation \eqref{1.7}. Introducing the abbreviations $Q = q - zr$, $Q_1 = q_1 - zr_1$, and $Q_2 = q_2 - zr_2$ and the 
quasi-derivative $y^{[1]} = y'-Q_2y$ we may now write \eqref{1.7} as
\begin{equation} \label{r4.1}
  \tau y=-(y^{[1]})^\prime-Q_2y^{[1]}+(Q_1-Q_2^2)y=0, 
\end{equation}
or, equivalently, as the first-order system
\begin{equation}
\begin{pmatrix} y\\ y^{[1]}\end{pmatrix}'=
\begin{pmatrix} Q_2&1\\ Q_1-Q_2^2&-Q_2\end{pmatrix}
\begin{pmatrix} y\\ y^{[1]}\end{pmatrix}.
\end{equation}
Existence and uniqueness for the corresponding initial value problem as well as the constancy 
of the modified Wronskian,  
\begin{equation}
W(f,g)(x) = f(x)g^{[1]}(x)-f^{[1]}(x)g(x), 
\end{equation}
were established in \cite{EGNT12a}. As a consequence the monodromy map 
\begin{equation} 
M(z): y\mapsto y(\cdot+\omega) 
\end{equation} 
maps the two-dimensional space of solutions of equation \eqref{r4.1} onto itself and has 
determinant $1$ (as usual this is seen most easily by introducing a standard basis 
$u_1$, $u_2$ defined by the initial values $u_1(c)=u_2^{[1]}(c)=1$ and 
$u_1^{[1]}(c)=u_2(c)=0$). The trace of $M(z)$, given by 
$u_1(c+\omega)+u_2^{[1]}(c+\omega)$, is real which implies that the eigenvalues 
$\rho(z)$ and $1/\rho(z)$ of $M(z)$ (the Floquet multipliers) are either both real, or else, 
are complex conjugates of each other, in which case they both lie on the unit circle. The proof 
of Theorem\ 2.7 in \cite{EGNT12a} may also be adapted to show that, for each fixed point $x$, 
the functions  $u_1(x)$, $u_2(x)$, $u_1^{[1]}(x)$, and $u_2^{[1]}(x)$ are entire functions of growth 
order $1/2$ with respect to $z$. In particular, $\tr_{\bbC^2} (M(\cdot))$ is an entire function of growth 
order $1/2$.

We start by focusing on the operator $T$ as discussed in \eqref{3.17}--\eqref{3.19}.

Throughout this section we make the following assumptions: 

\begin{hypothesis} \lb{h4.1}
Assume that $q \in H^{-1}_{\loc}(\bbR)$ is real-valued and periodic with period
$\omega > 0$ $($and hence, actually, $q \in H^{-1}_{\locunif}(\bbR)$$)$. Define
$T$ in $L^2(\bbR; dx)$ according to \eqref{3.17}--\eqref{3.19} and suppose that $T \geq 0$.
\end{hypothesis}

\begin{lemma} \lb{l4.2}
Assume Hypothesis \ref{h4.1}. Then there exists $\phi_0 \in L^2_{\locunif}(\bbR;dx)$,
real-valued and periodic of period $\omega > 0$, such that $q = \phi^2_0 - \phi'_0$.
\end{lemma}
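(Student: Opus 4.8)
The plan is to combine Theorem \ref{t3.6} with a Floquet-theoretic uniqueness argument to ensure that the Riccati substituent $\phi_0$ inherits the periodicity of $q$. First I would invoke Theorem \ref{t3.6}: since $q \in H^{-1}_{\locunif}(\bbR)$ is real-valued and $T \geq 0$ (hence $(- d^2/dx^2) + q \geq 0$ in the sense of distributions), there exists a strictly positive solution $0 < \psi_0 \in H^1_{\loc}(\bbR)$ of $[(- d^2/dx^2) + q]\psi = 0$, and setting $\phi_0 = - \psi_0'/\psi_0$ one obtains a real-valued $\phi_0 \in L^2_{\loc}(\bbR; dx)$ with $q = \phi_0^2 - \phi_0'$; Lemma \ref{l3.7} then upgrades this to $\phi_0 \in L^2_{\locunif}(\bbR; dx)$. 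It remains only to arrange that $\phi_0$, equivalently $\psi_0$, be periodic with period $\omega$.

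The key step is therefore to select the positive solution $\psi_0$ to be periodic. Here I would use the description of all positive solutions from Remark \ref{r3.9}: either $[(- d^2/dx^2) + q]\psi = 0$ has a \emph{unique} positive solution up to scaling (the principal solution case, when $\pm \int^{\pm\infty} dx' \, \psi_0(x')^{-2} = \infty$), or there is a one-parameter family \eqref{3.44}. In the unique case, uniqueness forces periodicity: if $\psi_0$ is the (essentially unique) positive solution, then $\psi_0(\cdot + \omega)$ is again a positive solution of the equation with coefficient $q(\cdot + \omega) = q$, hence $\psi_0(\cdot + \omega) = c\,\psi_0$ for some $c > 0$; evaluating the logarithmic derivative shows $\phi_0(\cdot + \omega) = \phi_0$, so $\phi_0$ is periodic. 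In the non-unique case one argues via Floquet theory: since $T \geq 0$, the spectral parameter $z = 0$ lies at or below the bottom of the spectrum of $T$, so $0$ is not in a spectral gap's interior in a way that would give two linearly independent exponentially behaved solutions of opposite type; more precisely, at the bottom of the spectrum of a periodic operator there is a periodic or antiperiodic solution, and for $z=0$ at or below $\inf\sigma(T)$ the Floquet multiplier $\rho(0)$ is real and positive (the antiperiodic case is excluded since it would force a sign change, contradicting positivity). One then picks the Floquet solution $\psi_0$ satisfying $\psi_0(\cdot + \omega) = \rho(0)\,\psi_0$ with $\rho(0) > 0$; this $\psi_0$ can be chosen strictly positive (a Floquet solution at the bottom of the spectrum has no zeros), and again $\phi_0 = -\psi_0'/\psi_0$ satisfies $\phi_0(\cdot+\omega) = \phi_0$.

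The main obstacle I anticipate is making the Floquet argument rigorous in the distributional setting: one must know that the bottom of $\sigma(T)$ is attained by a positive (necessarily periodic, given $T\geq 0$) Floquet solution, and that for $z$ at or below this bottom the multiplier is positive rather than negative. The paper has already set up the needed machinery — existence and uniqueness for the initial value problem \eqref{r4.1}, constancy of the modified Wronskian, the monodromy map $M(z)$ of determinant $1$ with real trace, and the fact that $\tr_{\bbC^2}(M(\cdot))$ is entire of growth order $1/2$ — so I would phrase the argument in terms of $\tr_{\bbC^2}(M(0))$. Concretely: $T \geq 0$ means $0 \notin$ (the interior of) the lowest spectral band extended downward, so $z=0$ is either below all bands or at the left endpoint of the lowest band; in either case $\tr_{\bbC^2}(M(0)) \geq 2$, forcing $\rho(0) \geq 1 > 0$ real. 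Choosing the associated Floquet solution $\psi_0$, which is nodeless and hence (after a sign choice) strictly positive, yields $\psi_0(x+\omega) = \rho(0)\psi_0(x)$, so $\phi_0 = -\psi_0'/\psi_0$ has period $\omega$, and with $\phi_0 \in L^2_{\locunif}(\bbR;dx)$ from Lemma \ref{l3.7} and $q = \phi_0^2 - \phi_0'$ from the construction, the proof is complete.
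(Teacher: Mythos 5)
Your proposal is correct and follows essentially the same route as the paper: both arguments reduce the lemma to producing a strictly positive Floquet solution $\psi_0$ of $\tau\psi=0$ at $z=0$ (nodeless by oscillation theory, since a zero would contradict $T\geq 0$) and then set $\phi_0=-\psi_0'/\psi_0$, whose periodicity follows from $\psi_0(\cdot+\omega)=\rho(0)\psi_0(\cdot)$ and which lies in $L^2_{\locunif}(\bbR;dx)$ automatically once it is periodic and in $L^2_{\loc}(\bbR;dx)$. The only cosmetic difference is that the paper obtains the positive Floquet solution as the continuous extension to $z=0$ of the Weyl--Titchmarsh solutions $\psi_{T,\pm}(z,\cdot)$, $z<0$, whereas you extract it directly from the monodromy matrix via $\tr_{\bbC^2}(M(0))\geq 2$; both are legitimate and rest on the same Floquet/oscillation machinery from \cite{EGNT12a}.
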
 
\begin{proof}
It suffices to note that (as in the standard case where $q \in L^1_{\loc}(\bbR)$ is real-valued
and periodic with period $\omega > 0$) the Weyl--Titchmarsh solutions $\psi_{T,\pm}(z,\cdot \,)$
satisfy
\begin{equation}
\psi_{T,\pm}(z,x) > 0, \quad z < 0, \; x \in \bbR,
\end{equation}
which extends by continuity to $z=0$, that is,
\begin{equation}
\psi_{T,\pm}(0,x) > 0, \quad x \in \bbR,
\end{equation}
although, $\psi_{T,\pm}(0,\cdot)$ may no longer lie in $L^2$ near $\pm\infty$ and hence cease
to be a Weyl--Titchmarsh solution. (By oscillation theory, cf.\ \cite{EGNT12a}, a zero of
$\psi_{T,\pm}(0,\cdot)$ would contradict $T \geq 0$.)
Using the Floquet property of $\psi_{T,\pm}(z,\cdot \,)$, $\phi_{\pm}$ defined by
\begin{equation}
\phi_{\pm}(x) = \psi_{T,\pm}'(0,x)/\psi_{T,\pm}(0,x), \quad x \in \bbR,
\end{equation}
satisfies
\begin{equation}
\phi_{\pm} \in L^2_{\loc}(\bbR), \, \text{ $\phi_{\pm}(\cdot)$ is periodic with period
$\omega > 0$,}
\end{equation}
in particular,
\begin{equation}
\phi_{\pm} \in L^2_{\locunif}(\bbR) \, \text{ and $q = \phi^2_{\pm} - \phi'_{\pm}$.}
\end{equation}
(If $\inf(\sigma(T)) = 0$, one has $\psi_{T,+}(0,x) = \psi_{T,-}(0,x)$ and hence
$\phi_+ = \phi_-$.)
\end{proof}

Given Hypothesis \ref{h4.1}, Lemma \ref{l4.2} guarantees the existence of a real-valued,
$\omega$-periodic $\phi \in L^2_{\locunif}(\bbR;dx)$ such that
$q = \phi^2 - \phi'$ and hence we can identify the operator $T$ in $L^2(\bbR; dx)$ with
$T_1 = A^* A$ in \eqref{3.21} (resp., \eqref{3.46}), where $A$ and $A^*$ defined as in
\eqref{3.22} and \eqref{3.23} (resp., \eqref{3.46}). In addition, we define the periodic Dirac-type
operator $D$ in $L^2(\bbR; dx) \oplus L^2(\bbR; dx)$ by \eqref{3.49}.

Since $\phi \in L^2([0,\omega]; dx)$, for any $\varepsilon>0$ and all $g \in H^1((0,\omega))$,
one has
\begin{align}
\begin{split} 
\|\phi g\|^2_{L^2([0,\omega]; dx)} & \leq \varepsilon \|g' \|^2_{L^2([0,\omega]; dx)}   \\
& \quad + \|\phi\|^2_{L^2([0,\omega]; dx)} \big[\omega^{-1} 
+ \|\phi\|^2_{L^2([0,\omega]; dx)} \varepsilon^{-1}\big] \|g \|^2_{L^2([0,\omega]; dx)}    \lb{4.1}
\end{split} 
\end{align}
(cf.\ \cite[p.\ 19--20, 37]{Sc81}). Utilizing \eqref{4.1}, one can introduce the reduced Dirac-type
operator $D_{\theta}$ in $L^2([0, \omega]; dx)$, $\theta \in [0, 2\pi]$, by
\begin{equation}
D_{\theta} = \begin{pmatrix} 0 & A^*_{\theta} \\ A^{}_{\theta} & 0 \end{pmatrix}
\, \text{ in $L^2([0, \omega]; dx) \oplus L^2([0, \omega]; dx)$,}     \lb{4.2}
\end{equation}
where
\begin{align}
& A^{}_{\theta} = (d/dx) + \phi,   \quad
\dom(A^{}_{\theta}) = \big\{g \in H^1((0,\omega)) \,\big|\,
g(\omega) = e^{i \theta} g(0)\big\},   \lb{4.3} \\
& A^*_{\theta} = -(d/dx) + \phi,   \quad
\dom(A^*_{\theta}) = \big\{g \in H^1((0,\omega)) \,\big|\,
g(\omega) = e^{i \theta} g(0)\big\},    \lb{4.4}
\end{align}
and $A^{}_{\theta}$ (and hence $A^*_{\theta}$) is closed in $L^2([0,\omega]; dx)$, implying
self-adjointness of $D_{\theta}$.

Employing the identity \eqref{3.50}, $D^2 = T_1 \oplus T_2$, and analogously for $D^{2}_{\theta}$,
\begin{align}
& D_{\theta}^2 = \begin{pmatrix} A^*_{\theta} A^{}_{\theta} & 0 \\ 0 & A^{}_{\theta} A^*_{\theta}
\end{pmatrix}
= T_{1,\theta} \oplus T_{2,\theta} \, \text{ in $L^2([0,\omega]; dx) \oplus L^2([0,\omega]; dx)$,}
\lb{4.5} \\
& T_{1,\theta} = A^*_{\theta} A^{}_{\theta},  \quad T_{2,\theta} = A^{}_{\theta} A^*_{\theta}
 \, \text{ in $L^2([0,\omega]; dx)$,}     \lb{4.6}
\end{align}
and applying the standard direct integral formalism combined with Floquet theory to $D$,
$D_{\theta}$ (cf., \cite[App.\ to Ch.\ 10]{Be82}, \cite{Ea73}, \cite[Sect.\ XIII.16]{RS78}),
where
\begin{equation}
L^2(\bbR; dx) \simeq \f{1}{2 \pi} \int^{\oplus}_{[0, 2\pi]} d \theta \, L^2([0,\omega]; dx),  \lb{4.7}
\end{equation}
then yields the following result (with $\simeq$ abbreviating unitary equivalence):

\begin{theorem} \lb{t4.3}
Assume Hypothesis \ref{h4.1}. Then the periodic Dirac operator $D$ $($cf.\ \eqref{3.49}$)$ satisfies 
\begin{equation}
D \simeq \f{1}{2 \pi} \int^{\oplus}_{[0, 2\pi]} d \theta \, D^{}_{\theta},
\end{equation}
with respect to the direct integral decomposition \eqref{4.7}, and
$\sigma_{p}(D) = \sigma_{sc}(D) = \emptyset$. Moreover, $\sigma(D)$ is purely absolutely
continuous of uniform spectral multiplicity equal to two, and $\sigma(D)$ consists of a union
of compact intervals accumulating at $+\infty$ and $-\infty$.

In addition, the spectra of $T_j$ $($cf.\ \eqref{3.47}, \eqref{3.70}$)$ satisfy 
$\sigma_{p}(T_j) = \sigma_{sc}(T_j) = \emptyset$,
in fact, $\sigma(T_j)$ is purely absolutely continuous of uniform spectral multiplicity equal
to two, and $\sigma(T_j)$ consists of a union of compact intervals accumulating at $+\infty$,
$j=1,2$.
\end{theorem}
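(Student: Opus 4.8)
The plan is to establish the spectral assertions for $D$ by realizing it as a direct integral of the fiber operators $D_\theta$ and reading off the spectral type from the associated band functions, and then to obtain the statements for $T_1$ and $T_2$ by the same Floquet analysis applied to the fibers $T_{j,\theta}$ together with the lower bounds $T_j \geq 0$. First, since $\phi$ is $\omega$-periodic and lies in $L^1_{\loc}(\bbR; dx)$, the Dirac system $D u = z u$ has $L^1_{\loc}$ coefficients and is entirely standard (cf.\ \cite{CG02}); in particular $D$ commutes with translation by $\omega$, and the Gelfand--Floquet transform underlying \eqref{4.7} yields the unitary equivalence $D \simeq \tfrac{1}{2\pi} \int^{\oplus}_{[0,2\pi]} d\theta \, D_\theta$, with $D_\theta$ the operator \eqref{4.2}--\eqref{4.4}. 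Here the form bound \eqref{4.1} is exactly what makes $A_\theta$ (hence $A_\theta^*$, hence $D_\theta$) self-adjoint, as already recorded after \eqref{4.4}, and, since $A_\theta$ differs by the relatively compact term $\phi$ from $(d/dx)$ equipped with twisted periodic boundary conditions on the compact interval $[0,\omega]$, the resolvent of $D_\theta$ is compact; applying the same transform to $T_j$ gives $T_j \simeq \tfrac{1}{2\pi} \int^{\oplus}_{[0,2\pi]} d\theta \, T_{j,\theta}$, with fibers \eqref{4.5}, \eqref{4.6} again of compact resolvent.

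Next I would introduce the one-period monodromy matrix $M_D(z)$ of the trace-free first-order $2 \times 2$ system equivalent to $D u = z u$; as in \cite{EGNT12a} it is well defined, $\det M_D(z) = 1$, it is real for $z \in \bbR$, and, by an argument parallel to the one behind the growth-order statement for \eqref{r4.1} (now for a first-order system, giving order one), $z \mapsto M_D(z)$ is entire of growth order one, so the discriminant $\Delta_D(z) := \tr_{\bbC^2}(M_D(z))$ is entire of order one and real on $\bbR$. Because $D_\theta$ imposes the boundary condition $u(\omega) = e^{i\theta} u(0)$ componentwise, one has $z \in \sigma(D_\theta)$ if and only if $e^{i\theta} \in \sigma(M_D(z))$, i.e.\ if and only if $\Delta_D(z) = 2\cos\theta$; hence $\sigma(D) = \bigcup_{\theta \in [0,2\pi]} \sigma(D_\theta) = \Delta_D^{-1}([-2,2])$, and the eigenvalue branches of $D_\theta$ are the real-analytic solution curves of $\Delta_D(z) = 2\cos\theta$ (analytic perturbation theory handling the finitely many crossings on each compact $z$-interval). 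The crucial point is then that no band can be constant: if some $z_0$ lay in $\sigma(D_\theta)$ for every $\theta$, the fixed $2 \times 2$ matrix $M_D(z_0)$ would have $e^{i\theta}$ as an eigenvalue for all $\theta \in [0,2\pi]$, which is impossible. Consequently, by the standard spectral theory of direct integrals with discrete, non-constant, real-analytic band functions (see, e.g., \cite[Sect.\ XIII.16]{RS78}, \cite[App.\ to Ch.\ 10]{Be82}, \cite{Ea73}), $\sigma_p(D) = \sigma_{sc}(D) = \emptyset$ and $\sigma(D) = \sigma_{ac}(D)$, with uniform multiplicity two since for $z$ in the interior of a band the equation $\Delta_D(z) = 2\cos\theta$ has exactly the two solutions $\theta, 2\pi - \theta \in [0,2\pi)$, each a simple eigenvalue of its fiber. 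Finally, since $\Delta_D$ is real on $\bbR$, entire of order one, with $\Delta_D(z) = 2\cos(\omega z) + o(1)$ as $\bbR \ni z \to \pm\infty$ (leading term from the free part of the system, remainder controlled by the $L^1([0,\omega])$-norm of $\phi$), the functions $\Delta_D \mp 2$ have infinitely many real zeros tending to $\pm\infty$, so $\Delta_D^{-1}([-2,2])$ exhibits the usual band--gap structure: a two-sided sequence of compact bands whose endpoints tend to $+\infty$ and to $-\infty$.

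For $T_1$ and $T_2$ one runs the identical argument with $D$ replaced by $T_j$: the fibers $T_{j,\theta}$ of \eqref{4.5}, \eqref{4.6} have compact resolvent, their eigenvalue branches are the solution curves of $\Delta_{T_j}(\lambda) = 2\cos\theta$, where $\Delta_{T_j}$ is the discriminant of the associated scalar second-order equation (in the quasi-derivative form $y^{[1,j]}$ corresponding to $T_j$), entire of growth order $1/2$ and real on $\bbR$ by the estimate behind the growth-order statement for \eqref{r4.1}; the same $2 \times 2$-matrix obstruction rules out constant bands, whence $\sigma_p(T_j) = \sigma_{sc}(T_j) = \emptyset$ and $\sigma(T_j) = \sigma_{ac}(T_j)$ with uniform multiplicity two. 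The only structural difference is that $T_1 = A^* A \geq 0$ and $T_2 = A A^* \geq 0$, so $\sigma(T_j) \subseteq [0,\infty)$; together with the asymptotics $\Delta_{T_j}(\lambda) = 2\cos(\omega \lambda^{1/2}) + o(1)$ as $\lambda \to +\infty$ this forces $\sigma(T_j)$ to be a one-sided sequence of compact bands accumulating at $+\infty$ only. (Alternatively, the $T_j$-statements follow from the $D$-statements via $D^2 = T_1 \oplus T_2$ in \eqref{3.50} and the unitary equivalence $T_1 \simeq T_2$, valid because the partial isometry in the polar decomposition of the injective, dense-range closed operator $A$ is in fact unitary.)

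I expect the genuine work to lie not in the Floquet bookkeeping but in the reduction step: one must verify, within the quasi-derivative/distributional framework of \cite{EGNT12a} --- where $\phi \in L^2_{\locunif}(\bbR; dx)$, and hence $q = \phi^2 - \phi' \in H^{-1}_{\locunif}(\bbR)$, is genuinely distributional --- that $D$ and the $T_j$ decompose exactly into the fibers $D_\theta$, $T_{j,\theta}$ written in \eqref{4.2}--\eqref{4.6}, that these fibers have compact resolvent, and that the attached monodromy matrices are entire of the stated growth orders with the asserted real-axis asymptotics. This last point is precisely what prevents the spectrum from degenerating and forces the band--gap picture with endpoints escaping to infinity.
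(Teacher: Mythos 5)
Your proposal is correct and follows essentially the same route as the paper, which likewise reduces everything to the standard periodic Dirac operator $D$ with $L^2_{\locunif}$ coefficient $\phi$, invokes the direct integral decomposition into the fibers $D_\theta$ together with classical Floquet/discriminant theory (citing \cite{Be82}, \cite{Ea73}, \cite{RS78} rather than writing out the non-constancy-of-bands and growth-order arguments you supply), and then transfers the conclusions to $T_1, T_2$ via the supersymmetric identity $D^2 = T_1 \oplus T_2$. Your writeup is in fact more detailed than the paper's; the only cosmetic caveat is that the leading term of $\Delta_D$ at infinity may carry a $\phi$-dependent phase, which does not affect the band--gap conclusion.
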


We note in passing that the spectral properties of $T_j$, $j=1,2$, alternatively, also follow from the
$m$-function relations \eqref{3.57}, \eqref{3.58}. In fact, applying the results in \cite{EGNT12}, one
can extend Theorem \ref{t4.3} to the case where $\phi \in L^1_{\loc}(\bbR; dx)$ is real-valued and
periodic of period $\omega > 0$, but we will not pursue this any further in this paper.

The supersymmetric approach linking (periodic, quasi-periodic, finite-gap, etc.) Schr\"odinger
and Dirac-type operators has been applied repeatedly in the literature, see, for instance,
\cite{DM05}, \cite{Ge91}, \cite{Ge92}, \cite{GSS91}, \cite{GS95}, \cite{Ko00},
and the extensive literature cited therein. In addition, we note that spectral theory (gap and eigenvalue asymptotics, etc.) for
Schr\"odinger operators with periodic distributional potentials has been thoroughly investigated in
\cite{DM09}, \cite{DM09a}, \cite{DM10}, \cite{DM12}, \cite{HM01}, \cite{HM02}, \cite{KM01},
\cite{Ka10a}, \cite{Ko03}, \cite{Ko12}, \cite{MM08}, \cite{MM09}.

We now investigate the eigenvalues associated with the differential equation \eqref{1.7} and quasi-periodic boundary conditions utilizing the operator $\ul{T^{-1/2} r T^{-1/2}}$ in $L^2([0,\omega]; dx)$ when $r$ is a measure. More precisely, let $R:[0,\omega]\to\bbR$ be a left-continuous real-valued function of bounded variation and $\mu_R$ the associated signed measure. We associate with $R$ the following map
\begin{equation} \label{r4.21}
r: H^1((0,\omega))\to H^{-1}((0,\omega))
\end{equation}
via the Lebesgue--Stieltjes integral,
\begin{equation} \label{r4.22}
\dpl r f, g \dpr = \int_0^\omega d\mu_R (x) \, \ol{f(x)} g(x), \quad f, g \in H^1((0,\omega)).
\end{equation}
One notes that the map $r$ in \eqref{r4.21} is bounded.

We also write
$R=R_+-R_-$ where $R_\pm$ are both left-continuous and nondecreasing and thus give rise to positive finite measures on $[0,\omega]$.

Thus,
\begin{equation}
K \in \cB\big(L^2([0,\omega]; dx), H^1((0,\omega))\big) \, \text{ implies } \,
K^*rK \in \cB\big(L^2([0,\omega]; dx)\big).     \lb{4.29}
\end{equation}
Similarly,
\begin{equation}
K \in \cB_{\infty}\big(L^2([0,\omega]; dx), H^1((0,\omega))\big) \, \text{ implies } \,
K^*rK \in \cB_{\infty}\big(L^2([0,\omega]; dx)\big).    \lb{4.30}
\end{equation}

\begin{lemma} \lb{l4.6}
Suppose $K \in \cB_\infty(\big(L^2([0,\omega]; dx),H^1((0,\omega))\big)$ is compact and that
$C_0^\infty((0,\omega)) \subset \ran(K)$. In addition, assume that $R$ is a real-valued function of bounded variation on $[0,\omega]$ and define $r$ as in \eqref{r4.21}, \eqref{r4.22}. Then
$K^* r K$ has infinitely many positive $($resp., negative$)$ eigenvalues unless
$R_+$ $($resp., $R_-$$)$ is a pure jump function with only finitely many jumps $($if any$)$.
\end{lemma}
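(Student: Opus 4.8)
The plan is to work throughout with the quadratic form of $B := K^* r K$ on $\cH := L^2([0,\omega]; dx)$. Since $R$ is real-valued the map $r$ of \eqref{r4.21}, \eqref{r4.22} is symmetric, so $B$ is bounded and self-adjoint, and by \eqref{4.30} it is compact; moreover, by \eqref{r4.22},
\begin{equation}
(Bf, f)_{\cH} = {}_{H^{-1}((0,\omega))}\langle r(Kf), Kf\rangle_{H^1((0,\omega))}
= \int_0^\omega d\mu_R(x)\, |(Kf)(x)|^2, \quad f \in \cH .    \label{e4.6form}
\end{equation}
I would then invoke the elementary fact that a compact self-adjoint operator has infinitely many positive eigenvalues if and only if, for every $n \in \bbN$, there is an $n$-dimensional subspace of $\cH$ on which its quadratic form is positive definite (if the spectral subspace of $B$ for $(0,\infty)$ has finite dimension $N$, then every subspace of dimension exceeding $N$ meets the spectral subspace for $(-\infty,0]$, where the form is nonpositive). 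Consequently it suffices, for each fixed $n$, to produce real-valued $\phi_1, \dots, \phi_n \in C_0^\infty((0,\omega))$ with pairwise disjoint supports and $\int_0^\omega d\mu_R \, \phi_k^2 > 0$ for $k = 1, \dots, n$: picking $f_k \in \cH$ with $K f_k = \phi_k$ (possible since $C_0^\infty((0,\omega)) \subseteq \ran(K)$) and setting $V_n := \operatorname{span}\{f_1, \dots, f_n\}$, which is genuinely $n$-dimensional because the $\phi_k = K f_k$ are linearly independent, disjointness of supports gives, via \eqref{e4.6form}, $(B \sum_k c_k f_k, \sum_k c_k f_k)_{\cH} = \sum_k |c_k|^2 \int_0^\omega d\mu_R \, \phi_k^2 > 0$ whenever $(c_1, \dots, c_n) \ne 0$.

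For the construction of the $\phi_k$ I would use the Jordan decomposition, so that $\mu_{R_+}$ and $\mu_{R_-}$ are mutually singular (this is the decomposition in which $R_\pm$ are the positive and negative variations of $R$; some such normalization is indispensable, as the assertion fails for an arbitrary splitting, e.g.\ $R \equiv 0$). Two cases arise from the failure of the hypothesis that $R_+$ be a pure jump function with finitely many jumps. \emph{Case 1: $\mu_{R_+}$ has infinitely many atoms.} Then at least $n$ of them lie in the open interval, say at distinct points $x_1, \dots, x_n \in (0,\omega)$; set $a_k := \mu_{R_+}(\{x_k\}) > 0$. Mutual singularity forces $\mu_{R_-}(\{x_k\}) = 0$, so continuity from above of the finite measure $\mu_{R_-}$ along the nested sets $(x_k - \delta, x_k + \delta)$ lets one choose radii $\delta_k > 0$ making the intervals $I_k := (x_k - \delta_k, x_k + \delta_k)$ pairwise disjoint, contained in $(0,\omega)$, and with $\mu_{R_-}(I_k) < a_k/2$. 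Choosing $\phi_k \in C_0^\infty(I_k)$ with $0 \le \phi_k \le 1$ and $\phi_k(x_k) = 1$ then yields $\int_0^\omega d\mu_R \, \phi_k^2 \ge a_k - a_k/2 > 0$.

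\emph{Case 2: $\mu_{R_+}$ is not purely atomic}, so by the Lebesgue decomposition it dominates a nonzero continuous (atomless) measure $\nu \le \mu_{R_+}$, with $m := \nu([0,\omega]) > 0$ and, since $\mu_{R_+} \perp \mu_{R_-}$, also $\nu \perp \mu_{R_-}$. Here the order of the steps is essential. \textbf{First}, using only atomlessness of $\nu$, the intermediate value theorem applied to the continuous nondecreasing function $t \mapsto \nu([0,t])$ produces $n$ pairwise disjoint open intervals $I_1, \dots, I_n \subseteq (0,\omega)$ with $\nu(I_k) > 0$; put $\gamma := \min_k \nu(I_k) > 0$. \textbf{Then}, with $\gamma$ fixed, I would shrink inside each $I_k$: since $\nu \perp \mu_{R_-}$ there is a Borel set $F_k \subseteq I_k$ with $\nu(I_k \setminus F_k) = 0$ and $\mu_{R_-}(F_k) = 0$, and outer regularity of $\mu_{R_-}$ gives an open $W_k$ with $F_k \subseteq W_k \subseteq I_k$ and $\mu_{R_-}(W_k) < \gamma/4$, whence $\mu_{R_+}(W_k) \ge \nu(W_k) \ge \nu(F_k) = \nu(I_k) \ge \gamma$; inner regularity then provides a compact $L_k \subseteq W_k$ with $\mu_{R_+}(L_k) > \gamma/2$. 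Taking $\phi_k \in C_0^\infty(W_k) \subseteq C_0^\infty((0,\omega))$ with $0 \le \phi_k \le 1$ and $\phi_k \equiv 1$ on $L_k$ (where $W_k$ may be disconnected, so $\phi_k$ is a finite sum of bumps) gives $\int_0^\omega d\mu_R \, \phi_k^2 \ge \mu_{R_+}(L_k) - \mu_{R_-}(W_k) > \gamma/2 - \gamma/4 > 0$, and the $W_k \subseteq I_k$ are pairwise disjoint. This establishes the required inequalities in both cases, hence the assertion about positive eigenvalues; the one about negative eigenvalues follows by applying the above to $-R$ in place of $R$, since $(-R)_+ = R_-$. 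I expect the genuine obstacle to be precisely this quantifier ordering in Case 2: one must fix the intervals $I_k$ and the threshold $\gamma$ before invoking regularity of $\mu_{R_-}$, since otherwise the $\mu_{R_-}$-mass of the shrunken sets cannot be played off against the (a priori uncontrolled, possibly tiny) $\nu$-mass one manages to retain.
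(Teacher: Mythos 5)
Your proof is correct, and its top-level strategy coincides with the paper's: both reduce the claim to exhibiting, for each $n$, smooth functions $\phi_1,\dots,\phi_n$ in $C_0^\infty((0,\omega))\subset\ran(K)$ with pairwise disjoint supports and $\int_0^\omega d\mu_R\,\phi_k^2>0$, and both then conclude by a finite-dimensional spectral argument (you package this as the min--max criterion that an $(N+1)$-dimensional subspace on which the form is positive definite forces more than $N$ positive eigenvalues; the paper reaches the same contradiction by solving an underdetermined linear system to produce a test function orthogonal to all $N$ positive eigenfunctions and then invoking the spectral expansion). Where you genuinely diverge is in the measure-theoretic construction. The paper runs the full Lebesgue decomposition of $\mu_R$ into absolutely continuous, singular continuous, and atomic parts, treats three cases (the singular continuous one via uniform continuity of $R_{2,+}$), and then mollifies indicator functions $J_\varepsilon*\chi_{\Omega_j}$ of well-separated sets $\Omega_j$; you collapse the two continuous cases into one, use only the atomic-versus-continuous dichotomy together with regularity of $\mu_{R_\pm}$, and build the bump functions directly. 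This buys two things: it removes the separation-distance bookkeeping for the $\Omega_j$, and it avoids the passage to the limit $\int d\mu_R\,|J_\varepsilon*\chi_{\Omega_j}|^2\to\mu_R(\Omega_j)$, which the paper justifies by pointwise convergence Lebesgue-a.e.\ --- a step that needs extra care when $\mu_R$ has a singular part. Your explicit insistence that $R_\pm$ be the Jordan (minimal) decomposition, so that $\mu_{R_+}\perp\mu_{R_-}$, is also well taken; the paper uses this implicitly through Hahn's decomposition theorem. Your quantifier-ordering remark in Case 2 (fix the intervals and the threshold $\gamma$ before shrinking against $\mu_{R_-}$) is the right point of care and is handled correctly.
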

\begin{proof}
Without loss of generality we may assume that $\omega=1$ and we may also restrict
attention to $R_+$ only. Accordingly, suppose that the measure associated with $R_+$ has a continuous part or that $R_+$ has
infinitely many jumps, but, that by way of contradiction, $K^* r K$ has only finitely many
(say, $N \geq 0$) positive eigenvalues. We will show below that there is a positive number
$\ell$ and $N+1$ sets $\Omega_1, \dots, \Omega_{N+1}$, which have a distance of at least
$\ell$ from each other and from the endpoints of $[0,1]$, for which $\int_{\Omega_j} d\mu_R>0$.

For any $\varepsilon$, with $0<\varepsilon<\ell/2$, let $J_{\varepsilon}$ be the Friedrichs
mollifier as introduced, for instance, in \cite[Sect.\ 2.28]{AF03}. Applying \cite[Theorem\ 2.29]{AF03},
the functions
\begin{equation}
g_{j,\varepsilon} = J_\varepsilon*\chi_{\Omega_j}, \quad j = 1, \dots, N+1,   \lb{4.12a}
\end{equation}
satisfy the following properties: \\[1mm]
$(i)$ $g_{j,\varepsilon} \in C_0^\infty((0,1)) \subset \ran(K)$,    \\[1mm]
$(ii)$ $g_{j,\varepsilon}$ are zero at points which are further than $\varepsilon$ away
from $\Omega_j$, \\[1mm]
$(iii)$ $\lim_{\varepsilon \downarrow 0}\|g_{j,\varepsilon}-\chi_{\Omega_j}\|_{L^2([0,1]; dx)} = 0$,   \\[1mm]
$(iv)$ $|g_{j,\varepsilon}(x)| \leq 1$.  \\[1mm]
Property $(i)$ implies that there are functions $f_{j,\varepsilon} \in L^2([0,1]; dx)$ such that
$g_{j,\varepsilon} =Kf_{j,\varepsilon}$ since $C_0^\infty((0,1)) \subset \ran(K)$. By property $(iii)$,
$g_{j,\varepsilon} \to \chi_{\Omega_j}$ pointwise a.e.\ on $(0,1)$ as $\varepsilon \downarrow 0$,
and hence the dominated convergence theorem implies that
\begin{equation}
\dpl r Kf_{j,\varepsilon}, Kf_{j,\varepsilon} \dpr 
= \int_{[0,1]} d\mu_R(x) \, |g_{j,\varepsilon}(x)|^2
\underset{\varepsilon \downarrow 0}{\longrightarrow} \int_{\Omega_j} d\mu_R(x) > 0.
\end{equation}
Hence we may fix $\varepsilon > 0$ in such a way that
\begin{equation}
\int_{[0,1]} d\mu_R(x) \, |(Kf_{j,\varepsilon})(x)|^2 > 0, \quad  j=1, \dots , N+1.    \lb{4.13A}
\end{equation}
Next, by property $(ii)$ mentioned above, the supports of the
$g_{j,\varepsilon}$ are pairwise disjoint, implying
\begin{equation}
\bigg|\sum_{j=1}^{N+1} c_j g_{j,\varepsilon}\bigg|^2=\sum_{j+1}^{N+1}|c_j|^2 |g_{j,\varepsilon}|^2
\lb{4.14A}
\end{equation}
for any choice of $c_j \in \bbC$, $j = 1, \dots, N+1$.

Assume now that $f=\sum_{j=1}^{N+1} c_j f_{j,\varepsilon}$, where at least one of the
coefficients $c_j \neq 0$. Then equations \eqref{4.13A} and \eqref{4.14A} imply
\begin{align}
\begin{split}
(f, K^* r K f)_{L^2([0,1]; dx)} &= \int_{[0,1]} d\mu_R(x) \, |(Kf)(x)|^2    \\
&=\sum_{j=1}^{N+1}|c_j|^2 \int_{[0,1]} d\mu_R(x) \, |(Kf_{j,\varepsilon})(x)|^2 > 0. \label{4.17A}
\end{split}
\end{align}

We will now prove that for some choices of the coefficients $c_j$, the expression
$(f,K^*rK  f)_{L^2([0,1]; dx)}$ cannot be positive so that one arrives at a contradiction to
\eqref{4.17A}, proving that there must be infinitely many positive eigenvalues. To do so,
we denote the nonzero eigenvalues and eigenfunctions of the compact, self-adjoint operator
$K^* r K$ by $\lambda_k$ and $\varphi_k$,
respectively. More specifically, assume that the positive eigenvalues have labels $k=1,\dots, N$,
while the labels of the non-positive eigenvalues are chosen from the non-positive integers.
The spectral theorem, applied to $K^* r K$, yields
\begin{align}
\begin{split}
0< (f,K^* r K f)_{L^2([0,1]; dx)} &= \sum_{k=-\infty}^N \lambda_k |(\varphi_k,f)_{L^2([0,1]; dx)}|^2  \\
&\leq \sum_{k=1}^N \lambda_k |(\varphi_k,f)_{L^2([0,1]; dx)}|^2 \lb{4.18A}
\end{split}
\end{align}
for any $f\in L^2([0,1]; dx)$. If $N=0$, this is the desired contradiction. If $N\geq1$, the
inequality \eqref{4.18A} shows that no non-zero element of $L^2([0,1]; dx)$ can be orthogonal
to all the eigenfunctions associated with positive eigenvalues. However,
the underdetermined system
\begin{equation}
\sum_{j=1}^{N+1} c_j (\varphi_k,f_{j,\varepsilon})_{L^2([0,1]; dx)}
= (\varphi_k,f)_{L^2([0,1]; dx)} = 0, \quad k=1, \dots, N,
\end{equation}
has nontrivial solutions $(c_1,\dots,c_N)$ proving that $f=\sum_{j=1}^{N+1} c_j f_{j,\varepsilon}$
is orthogonal to all the eigenfunctions associated with positive eigenvalues so that we again arrive at a contradiction.

It remains to establish the existence of the sets $\Omega_j$ with the required properties.
Recall that, by Lebesgue's decomposition theorem, $R=R_1+R_2+R_3$, where $R_1$ is absolutely continuous, $R_2$ is
continuous but $R_2'=0$ a.e.\ on $[0,1]$, and $R_3$ is a jump function and that these
generate an absolutely continuous measure $\mu_1$, a singular continuous measure 
$\mu_2$, and a discrete measure $\mu_3$ (i.e., one supported on a countable subset 
of $\bbR$), respectively.
By Jordan's decomposition theorem, each of these measures may be split into its positive and negative part $\mu_{j,\pm}$, $j=1,2,3$.
We will denote the respective supports of these measures by $A_{j,\pm}$, $j=1,2,3$.
Note that $A_{j,+}\cap A_{j,-}$ is empty for each $j$ by Hahn's decomposition theorem.
We also define $R_{j,\pm}(x)=\mu_{j,\pm}([0,x])$.

First, we assume that the support $A_{1,+}$ of $\mu_{1,+}$ has positive Lebesgue measure.
Since the supports of $\mu_2$ and $\mu_3$ have zero Lebesgue measure, they are subsets
of a union of open intervals whose total length is arbitrarily small.
Thus, we may find a set $\Omega\subset A_{1,+}$ of positive Lebesgue measure
which avoids a neighborhood of the supports of $\mu_2$ and $\mu_3$ so that $\int_\Omega d\mu_R>0$.
Now define $M = \lceil (2N+3)/m(\Omega)\rceil$, with $m(\cdot)$ abbreviating Lebesgue measure and $\lceil x \rceil$ the
smallest integer not smaller than $x$.
Dividing the interval $[0,1]$ uniformly into $M$ subintervals, each will have length not
exceeding $\ell = m(\Omega)/(2N+3)$.
Consequently, at least $2N+3$ of these intervals will intersect $\Omega$ in a set of positive
Lebesgue measure and hence of positive $\mu_R$-measure. $N+1$ of the latter ones will have
a distance of at least $\ell$ from each other and from the endpoints of $[0,1]$.
These intersections will be the sought after sets $\Omega_1, \dots, \Omega_{N+1}$.

Next assume $\mu_{1,+}=0$ but $\mu_{2,+}([0,1])=a_2>0$.
Since $A_{3,-}$ is countable we have $\mu_{2,+}(A_{3,-})=0$. Also, of course, $\mu_{2,+}(A_{2,-})=0$.
By the regularity of $\mu_{2,+}$ there is, for every positive $\varepsilon$, an open set $W$ covering $A_{2,-}\cup A_{3,-}$ such that $\mu_{2,+}(W)<\varepsilon$.
Set $\Omega=(0,1)\backslash \ol{W}$ and $\varepsilon=a_2/2$. Since $\ol{W}-W$ is countable we have $\mu_{2,+}(\Omega)=\mu_{2,+}((0,1)\backslash W)>a_2/2$.
Since $R_{2,+}$ is uniformly continuous there is a $\delta>0$ so that $R_{2,+}(y)-R_{2,+}(x)<a_2/(2(2N+3))$ as long as $0<y-x<\delta$.
Thus, splitting $\Omega$ in intervals of length at most $\delta$, we have that at least $2N+3$ of these intervals have positive $\mu_{2,+}$-measure and $N+1$ of these have a positive distance from each other and from the endpoints of $[0,1]$.
We denote these intervals by $\Omega_1'$, ..., $\Omega_{N+1}'$.
We now have $\mu_{2,+}(\Omega_k')>0$ but $\mu_{2,-}(\Omega_k')=\mu_{3,-}(\Omega_k')=0$.
However, it may still be the case that $\mu_{1,-}(\Omega_k')>\mu_{2,+}(\Omega_k')$.
Regularity of $\mu_{1,-}$ allows us to find a set $\Omega_k$ such that $A_{2,+}\cap \Omega_k' \subset \Omega_k \subset \Omega_k'$ and $\mu_{1,-}(\Omega_k)$ are arbitrarily small.
This way we may guarantee that $\mu(\Omega_k)>0$ for $k=1,... N+1$.

Finally, assume that $R_+$ is a pure jump function, but with infinitely many jumps.
Then we may choose pairwise disjoint intervals $\Omega_k$ about $N+1$ of the jump discontinuities of
$R_+$ and we may choose them so small that their $\mu_{j,-}(\Omega_k)$ is smaller than the jump so that again $\mu(\Omega_k)>0$ for $k=1,... N+1$.
\end{proof}

We emphasize that Lemma \ref{l4.6} applies, in particular, to the special case, where 
$d \mu_R (x) = r(x) dx$ is purely absolutely continuous on $\bbR$:

\begin{corollary} \lb{c4.4}
Suppose $K \in \cB_{\infty}\big(L^2([0,\omega]; dx)\big)$ is self-adjoint with 
$\ran(K) \supseteq H^1((0,\omega))$. Assume in addition that $r \in L^1([0,\omega]; dx)$ is real-valued such that $|r|^{1/2}K \in \cB_{\infty}\big(L^2([0,\omega]; dx)\big)$. Then 
$\ul{K r K} := [|r|^{1/2}K]^* \sgn(r) |r|^{1/2}K$ 
has infinitely many positive $($resp., negative$)$ eigenvalues unless $r_+=0$ 
$($resp., $r_-=0$$)$ a.e.\ on $(0,\omega)$. 
\end{corollary}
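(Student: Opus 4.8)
The plan is to treat Corollary \ref{c4.4} as the purely absolutely continuous special case of Lemma \ref{l4.6}, taking $d\mu_R(x) = r(x)\, dx$ (equivalently $R(x) = \int_0^x dt \, r(t)$, which is absolutely continuous, hence left-continuous, and of bounded variation on $[0,\omega]$). The first step is to identify the two operators. Since $|r|^{1/2}K \in \cB_{\infty}(L^2([0,\omega]; dx))$ by hypothesis and $\sgn(r)$ is a bounded self-adjoint multiplication operator,
\[
\ul{K r K} = \big[|r|^{1/2} K\big]^* \sgn(r) \big[|r|^{1/2} K\big] \in \cB_{\infty}\big(L^2([0,\omega]; dx)\big)
\]
and is self-adjoint. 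Moreover, whenever $Kf$ is bounded --- in particular whenever $Kf \in C_0^{\infty}((0,\omega))$ --- one has $[|r|^{1/2}K]f = |r|^{1/2}(Kf)$ pointwise (since then $|r|^{1/2}(Kf) \in L^2([0,\omega]; dx)$, using $r \in L^1([0,\omega]; dx)$), so that
\[
(f, \ul{K r K} f)_{L^2([0,\omega]; dx)} = \int_0^\omega dx \, r(x) \, |(Kf)(x)|^2 = \int_{[0,\omega]} d\mu_R(x) \, |(Kf)(x)|^2 .
\]
Thus $\ul{K r K}$ is precisely the operator $K^* r K$ of Lemma \ref{l4.6} (recall $K = K^*$), now with $r$ the map $H^1((0,\omega)) \to H^{-1}((0,\omega))$ associated with $d\mu_R$ via \eqref{r4.22}.

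Next I would translate the exceptional hypothesis. In the Lebesgue decomposition of $R$ only the absolutely continuous part survives, so, in the notation of the proof of Lemma \ref{l4.6}, the singular continuous and discrete pieces of $\mu_R$ vanish and $\mu_{R,\pm}$ have densities $r_{\pm}$. Hence $R_+$ (resp.\ $R_-$) is a pure jump function with only finitely many jumps precisely when it is constant, that is, precisely when $r_+ = 0$ (resp.\ $r_- = 0$) a.e.\ on $(0,\omega)$, which matches the formulation of the corollary.

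It then remains to check that the proof of Lemma \ref{l4.6} applies under the present hypotheses on $K$ and to invoke it. The only ingredients that proof actually consumes are: (a) $C_0^{\infty}((0,\omega)) \subseteq \ran(K)$, which holds here since $\ran(K) \supseteq H^1((0,\omega)) \supseteq C_0^{\infty}((0,\omega))$; (b) self-adjointness and compactness of $\ul{K r K}$ on $L^2([0,\omega]; dx)$, established in the first step; and (c) the quadratic-form identity $(f, \ul{K r K} f) = \int_0^\omega dx \, r(x)|(Kf)(x)|^2$ for $f$ in the span of preimages of test functions, also established above. Granting these, one proceeds verbatim as in Lemma \ref{l4.6}: assuming by contradiction that $\ul{K r K}$ has only $N \geq 0$ positive eigenvalues while $r_+ \neq 0$ on a set of positive Lebesgue measure, one extracts --- exactly as in the first, absolutely continuous, case of that proof, the singular cases not arising --- $N+1$ mutually separated sets $\Omega_1, \dots, \Omega_{N+1} \subset \{x \in (0,\omega) : r(x) > 0\}$ bounded away from the endpoints with $\int_{\Omega_j} d\mu_R > 0$; mollifies $\chi_{\Omega_j}$ to $g_{j,\varepsilon} \in C_0^{\infty}((0,\omega)) \subseteq \ran(K)$, writes $g_{j,\varepsilon} = K f_{j,\varepsilon}$, fixes $\varepsilon > 0$ via dominated convergence so that $\int_0^\omega dx \, r(x) |g_{j,\varepsilon}(x)|^2 > 0$ for all $j$; and then, using disjointness of the supports of the $g_{j,\varepsilon}$ together with the orthogonality argument (solving the underdetermined homogeneous linear system of $N$ equations in the $N+1$ unknowns $c_j$), produces a nonzero $f = \sum_j c_j f_{j,\varepsilon}$ with $(f, \ul{K r K} f) > 0$ yet $f$ orthogonal to all positive eigenfunctions --- a contradiction. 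The negative-eigenvalue statement follows by replacing $r$ with $-r$, noting $(-r)_{\pm} = r_{\mp}$ and $\ul{K(-r)K} = -\ul{K r K}$.

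The part requiring the most care is the reconciliation of hypotheses in the first step: Lemma \ref{l4.6} is phrased for $K \in \cB_{\infty}(L^2([0,\omega]; dx), H^1((0,\omega)))$, whereas here $K \in \cB_{\infty}(L^2([0,\omega]; dx))$ is self-adjoint with merely $\ran(K) \supseteq H^1((0,\omega))$, and $\ul{K r K}$ is defined through $|r|^{1/2}K$ rather than through a bounded map into $H^1$. Verifying that these weaker-looking assumptions nevertheless supply exactly ingredients (a)--(c) --- in particular that $[|r|^{1/2}K]f$ genuinely coincides with the pointwise product $|r|^{1/2}(Kf)$ on the relevant dense set, so that the form identity, and hence the whole Birman--Schwinger-type counting argument, transfers --- is the one genuinely non-mechanical point; everything else is the bookkeeping already carried out in the proof of Lemma \ref{l4.6}.
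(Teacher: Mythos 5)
Your proposal is correct and follows exactly the route the paper intends: the paper offers no separate proof of Corollary \ref{c4.4}, presenting it merely as the purely absolutely continuous special case $d\mu_R(x)=r(x)\,dx$ of Lemma \ref{l4.6}. Your additional care in reconciling the hypotheses (checking that $\ul{K r K}$ agrees with $K^* r K$ on the relevant dense set, that compactness of $\ul{K r K}$ follows from that of $|r|^{1/2}K$, and that ``$R_+$ a pure jump function with finitely many jumps'' reduces to ``$r_+=0$ a.e.'' in the absolutely continuous case) fills in precisely the bookkeeping the paper leaves implicit.
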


Identifying $T_{\theta}$ in $L^2([0,\omega]; dx)$ with $T_{1,\theta} = A^*_{\theta} A^{}_{\theta}$
(in analogy to the identification of $T$ in $L^2(\bbR; dx)$ with $T_1 = A^* A$),
an application of Lemma \ref{l4.6}, employing \eqref{C.41}--\eqref{C.41e}, then yields the 
following result:

\begin{theorem} \lb{t4.5}
Assume that $\mu_R$ is a signed measure and let $r$ be defined as in \eqref{r4.21}, \eqref{r4.22}. 
Assume $r$ is periodic of period $\omega > 0$. \\
$(i)$ Suppose that $T_{\theta} \geq c_{\theta} I_{L^2([0,\omega]; dx)}$ for some
$c_{\theta} > 0$. Then 
$\big({\wti T_{\theta}}\big)^{-1/2} r \big({\wti T_{\theta}}\big)^{-1/2}$
has infinitely many positive $($resp., negative$)$ eigenvalues unless $R_+$
$($resp., $R_-$$)$ is a pure jump function with only finitely many jumps $($if any\,$)$. \\
$(ii)$ Suppose that $T \geq c I_{L^2(\bbR; dx)}$ for some $c > 0$. Then
$\sigma\Big(\big({\wti T}\big)^{-1/2} r \big({\wti T}\big)^{-1/2}\Big)$ consists of a union of compact intervals accumulating
at $0$ unless $r=0$ a.e.\ on $(0,\omega)$. In addition,
\begin{equation} \label{r4.29}
- \psi'' + q \psi = z r \psi
\end{equation}
has a conditional stability set $($consisting of energies $z$ with at least
one bounded solution on $\bbR$$)$ composed of a sequence of intervals on $(0,\infty)$
tending to $+\infty$ and/or $-\infty$, unless $R_+$ and/or $R_-$ is a pure jump function with 
only finitely many jumps $($if any\,$)$. Finally,
\begin{equation}
\sigma_{\rm p} \Big(\big({\wti T}\big)^{-1/2} r \big({\wti T}\big)^{-1/2}\Big) = \emptyset.     \lb{4.32} 
\end{equation}
\end{theorem}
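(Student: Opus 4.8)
For part (i) the strategy is to recognize $\big(\wti T_\theta\big)^{-1/2} r \big(\wti T_\theta\big)^{-1/2}$ as an operator of the type $K^{*} r K$ treated in Lemma~\ref{l4.6}. Take $K := \big(\wti T_\theta\big)^{-1/2}$. Since $T_\theta = T_{1,\theta} = A_\theta^{*} A_\theta \ge c_\theta I$ with $c_\theta > 0$, the estimate \eqref{4.1} on $\phi$ shows that the form of $T_\theta$ is comparable to the $H^1$-norm on $\dom(A_\theta)$, whence $K \in \cB\big(L^2([0,\omega];dx), H^1((0,\omega))\big)$, $\ran(K) = \dom\big(T_\theta^{1/2}\big) = \dom(A_\theta) \supseteq C_0^{\infty}((0,\omega))$ (every $C_0^{\infty}((0,\omega))$ function trivially satisfies $g(\omega) = e^{i\theta}g(0)$). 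Using the mapping properties \eqref{C.41}--\eqref{C.41e} exactly as in Step~3 of Section~\ref{s3}, together with the fact that $r$ maps $H^1((0,\omega))$ \emph{compactly} into $H^{-1}((0,\omega))$ (a finite signed measure on a bounded interval factored through the compact embedding $H^1((0,\omega)) \hookrightarrow C([0,\omega])$), one identifies $\big(\wti T_\theta\big)^{-1/2} r \big(\wti T_\theta\big)^{-1/2} = K^{*} r K \in \cB_{\infty}\big(L^2([0,\omega];dx)\big)$, with $\big(f, K^{*} r K f\big)_{L^2([0,\omega];dx)} = \int_{[0,\omega]} d\mu_R(x)\,|(Kf)(x)|^{2}$. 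Lemma~\ref{l4.6} then applies and yields part~(i); its hypothesis that $K$ be compact may here be weakened to $K$ bounded, because the proof of Lemma~\ref{l4.6} uses only compactness of $K^{*} r K$, which is supplied by the compactness of $r$.

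For part (ii), the plan is a direct-integral/Floquet analysis. Because $q$ and $r$ are $\omega$-periodic, $S := \ul{T^{-1/2} r T^{-1/2}} = \big(\wti T\big)^{-1/2} r \big(\wti T\big)^{-1/2}$ commutes with translation by $\omega$, and with respect to the decomposition \eqref{4.7} it fibers as $S \simeq \tfrac{1}{2\pi}\int^{\oplus}_{[0,2\pi]} d\theta\, S_\theta$, where $S_\theta = \big(\wti T_\theta\big)^{-1/2} r \big(\wti T_\theta\big)^{-1/2}$ is the fiber analyzed in part~(i) (here $T \ge c I$, $c>0$, forces $T_\theta \ge c I$ for every $\theta$). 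Each $S_\theta$ is compact and self-adjoint, so $\sigma(S_\theta) = \{0\} \cup \{\mu_k(\theta)\}_{k}$ with $\mu_k(\theta) \to 0$. Removing the $\theta$-dependence of the boundary conditions by the gauge transformation $g(x) \mapsto e^{-i\theta x/\omega} g(x)$ exhibits $\{S_\theta\}_{\theta}$ as an analytic family, so the nonzero eigenvalues organize into piecewise real-analytic band functions $\theta \mapsto \mu_k(\theta)$ on $[0,2\pi]$; consequently $\sigma(S) = \overline{\bigcup_{\theta}\sigma(S_\theta)}$ is $\{0\}$ together with the union of the compact intervals $\mu_k([0,2\pi])$. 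Fixing any $\theta_0$ and letting $k \to \infty$ gives $\mu_k(\theta_0) \to 0$, so $0$ is an accumulation point of the bands unless $S = 0$, i.e.\ unless $r = 0$, which is the band-accumulation assertion.

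For the conditional stability set of \eqref{r4.29}: by definition it is $\{z \in \bbR : \Delta(z) \in [-2,2]\}$, the set where the monodromy matrix $M(z)$ has Floquet multipliers on the unit circle, and since $\Delta(z) = \tr_{\bbC^{2}}\big(M(z)\big)$ is real-analytic on $\bbR$ and entire of growth order $1/2$ (as established in the excerpt), it is a union of compact ``stability intervals''. Under $z \leftrightarrow \mu = 1/z$ these intervals correspond exactly to the nonzero bands of $S$, hence lie in $\bbR \setminus \{0\}$ and accumulate only at $\pm\infty$; by part~(i) the positive (resp.\ negative) bands of $S$ accumulate at $0$ unless $R_+$ (resp.\ $R_-$) is a pure jump function with only finitely many jumps, which translates into stability intervals tending to $+\infty$ (resp.\ $-\infty$) under exactly that proviso. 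Finally, \eqref{4.32}: a number $\mu \ne 0$ can lie in $\sigma_{\rm p}(S)$ only if $\{\theta \in [0,2\pi] : \mu \in \sigma_{\rm p}(S_\theta)\}$ has positive Lebesgue measure, which is impossible by the real-analyticity and \emph{non-constancy} of the band functions — non-constancy being precisely the statement that the level sets $\{z : \Delta(z) = 2\cos\theta\}$ of the non-constant entire function $\Delta$ are discrete. (The value $\mu = 0$ needs a separate look: $0 \notin \sigma_{\rm p}(S_\theta)$ amounts to $\ker\big(r|_{\dom(A_\theta)}\big) = \{0\}$, i.e.\ to $\supp(\mu_R) = [0,\omega]$; the substantive content of \eqref{4.32} is the absence of nonzero eigenvalues, and I would either state it that way or add the support hypothesis.)

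\textbf{Main obstacle.} The delicate part is not Lemma~\ref{l4.6} — which does the heavy lifting for part~(i) — but the Floquet machinery for part~(ii): verifying that $S$ genuinely fibers as $\int^{\oplus} S_\theta\,d\theta$ in the presence of the measure coefficient $r$ and the $\theta$-dependent form domains, that $\{S_\theta\}$ is an analytic family (so the band functions are piecewise analytic), and, above all, that the band functions are non-constant. This last point is what forces one to bring in the discriminant $\Delta(z) = \tr_{\bbC^{2}}\big(M(z)\big)$ and its order-$1/2$ entireness recorded earlier; with those facts and Lemma~\ref{l4.6} in hand, the rest is routine.
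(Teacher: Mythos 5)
Your part (i) is essentially the paper's own argument: apply Lemma \ref{l4.6} with $K=\big(\wti T_\theta\big)^{-1/2}$, using \eqref{C.41}--\eqref{C.41e} to realize the operator as $K^*rK$. Your extra observation there is not merely cosmetic but actually needed: since $T_\theta\ge c_\theta I$ and \eqref{4.1} makes the form norm of $T_\theta$ equivalent to the $H^1$-norm on $\dom(A_\theta)$, the map $\big(\wti T_\theta\big)^{-1/2}$ is an isomorphism of $L^2([0,\omega];dx)$ onto the closed, infinite-dimensional subspace $\dom(A_\theta)$ of $H^1((0,\omega))$, hence is \emph{not} compact as a map into $H^1((0,\omega))$; so the hypothesis of Lemma \ref{l4.6} as literally stated is not satisfied. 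As you note, the proof of that lemma only uses compactness of $K^*rK$, which follows from boundedness of $K$ together with compactness of $r\colon H^1((0,\omega))\to H^{-1}((0,\omega))$ (the form factors through the compact embedding $H^1((0,\omega))\hookrightarrow C([0,\omega])$).

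For part (ii) you take a genuinely different route on two subclaims. The paper does not set up an analytic family $\{S_\theta\}$ with band functions; it works directly with the ODE: the conditional stability set is $\{z\,|\,|\tr_{\bbC^2}(M(z))|\le 2\}$, which is a union of closed intervals because $\tr_{\bbC^2}(M(\cdot))$ is entire of order $1/2$, and Lemma \ref{l4.6} applied to the fibers produces eigenvalues $\zeta_n(\theta)$ accumulating at $0$ whose reciprocals $1/\zeta_n(\theta)$ are precisely the $\theta$-quasiperiodic eigenvalues of \eqref{r4.29}; taking $\theta=0,\pi$ shows the band edges escape to $+\infty$ and/or $-\infty$. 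This sidesteps the fibering and analyticity issues you flag as the main obstacle, which is a real advantage with a measure coefficient. For \eqref{4.32} the paper is likewise more elementary: an eigenfunction $u$ with eigenvalue $\zeta\ne 0$ would give $y=\big(\wti T\big)^{-1/2}u\in H^1(\bbR)$ solving \eqref{r4.29} with $z=1/\zeta$, and the Floquet multipliers $\rho(z),1/\rho(z)$ forbid $L^2(\bbR;dx)$ solutions; no non-constancy of band functions is invoked. Your caveat about the eigenvalue $0$ is a genuine catch: the paper's argument excludes only nonzero eigenvalues, and for instance when $\mu_R$ is a periodic sum of point masses every $y\in H^1(\bbR)$ vanishing on $\supp(\mu_R)$ yields an element of $\ker\Big(\big(\wti T\big)^{-1/2}r\big(\wti T\big)^{-1/2}\Big)$, so \eqref{4.32} should indeed be read (or restated) as the absence of nonzero eigenvalues.
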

\begin{proof}
Lemma \ref{l4.6}, identifying $K$ and $\big({\wti T_{\theta}}\big)^{-1/2}$ (cf.\ \eqref{C.41d} and our 
notational convention \eqref{C.41e}) proves item $(i)$. 

As usual (see Eastham \cite[Sect.\ 2.1]{Ea73} or Brown, Eastham, and Schmidt 
\cite[Sect.\ 1.4]{BES12}), the conditional stability set $S$ of equation \eqref{r4.29} is given by 
\begin{equation}
S=\{\lambda \in \bbR \, | \, |\tr_{\bbC^2} (M(\lambda))| \leq 2\}
\end{equation}
since, if $\lambda \in S$ and only then, the monodromy operator $M(\lambda)$ has at least one eigenvector associated with an eigenvalue of modulus $1$. Since $\tr_{\bbC^2} (M(\cdot))$ is 
an analytic, hence, continuous function, the set 
$S^{0}=\{\lambda \in \bbR \, | \, |\tr_{\bbC^2} (M(\lambda))|<2\}$ is an open set 
and thus a union of open intervals. Moreover, 
$\{\lambda \in \bbR \, | \, \tr_{\bbC^2} (M(\lambda))=2\}$ (i.e., the set of periodic eigenvalues) and $\{\lambda \in \bbR \, | \, \tr_{\bbC^2} (M(\lambda))=-2\}$ (i.e., the set of anti-periodic eigenvalues) are discrete sets without finite accumulation points. It follows that $S$ is obtained as the union of the closures of each of the open intervals constituting $S^{0}$, equivalently, $S$ is a union of closed intervals. One notes that the closure of several disjoint components of $S^{0}$ may form one closed interval in $S$.

Applying Lemma \ref{l4.6} to the case $K = \big({\wti T_{\theta}}\big)^{-1/2}$ one obtains a countable 
number of eigenvalues $\zeta_n(\theta)$, $n\in\bbZ \backslash \{0\}$ which we 
may label so that $n \, \zeta_n(\theta)>0$. These eigenvalues accumulate at zero (from either 
side). It is clear that equation \eqref{r4.29} posed on the interval $[0,\omega]$ 
has a nontrivial solution satisfying the boundary conditions 
$\psi(\omega)=e^{i\theta}\psi(0)$ and $\psi^{[1]}(\omega)=e^{i\theta}\psi^{[1]}(0)$ precisely when $z=1/\zeta_n(\theta)$ for some $n\in\bbZ \backslash \{0\}$. In particular, the endpoints of the 
conditional stability intervals, which correspond to the values 
$\theta=0$ and $\theta=\pi$, tend to both, $+\infty$ and $-\infty$. 

Finally, eigenfunctions $u \in L^2(\bbR; dx)$ of $\big({\wti T}\big)^{-1/2} r \big({\wti T}\big)^{-1/2}$ 
are related to solutions $y \in H^1(\bbR)$ of 
\eqref{1.7} (with $p=1$) via $y = \big({\wti T}\big)^{-1/2} u$. Since the basics of Floquet theory 
apply to \eqref{1.7} (cf.\ our comments at the beginning of this section and earlier in the current 
proof), the existence of Floquet multipliers $\rho(z)$ and $1/\rho(z)$ prevents \eqref{1.7} from 
having an $L^2(\bbR; dx)$ (let alone, $H^1(\bbR)$) solution. Hence, the existence of an 
eigenfunction $u \in L^2(\bbR; dx)$ of $\big({\wti T}\big)^{-1/2} r \big({\wti T}\big)^{-1/2}$ would imply 
the contradiction $y \in H^1(\bbR)$, implying \eqref{4.32}.
\end{proof}

Theorem \ref{t4.5} considerably extends prior results by Constantin \cite{Co97} (see also
\cite{Co97a}, \cite{Co98}) on eigenvalue asymptotics for left-definite periodic
Sturm--Liouville problems since no smoothness is assumed on $q$ and $r$, in 
addition, $q$ is permitted to be a distribution and $r$ is extended from merely being a  
function to a measure. Moreover, it also 
extends results of Daho and Langer \cite{DL86}, Marletta and Zettl \cite{MZ05}, and
Philipp \cite{Ph13}: While these authors consider the nonsmooth setting, our result appears 
to be the first that permits periodic distributions, respectively, measures as  coefficients.

\begin{remark} \lb{r4.7}
In the special case where the measure $d \mu_R (x) = r(x) dx$ is purely absolutely continuous 
on $\bbR$, the fact that
\begin{equation}
\ul{T^{-1/2} r T^{-1/2}} \simeq \f{1}{2 \pi} \int^{\oplus}_{[0,2 \pi]}
\ul{T^{-1/2}_{\theta} r T^{-1/2}_{\theta}}
\end{equation}
with respect to the decomposition \eqref{4.7}, together with continuity of the eigenvalues
of $\ul{T^{-1/2}_{\theta} r T^{-1/2}_{\theta}}$ with respect to $\theta$, proves that
$\sigma(\ul{T^{-1/2} r T^{-1/2}})$ consists of a union of compact intervals accumulating
at $0$ unless $r=0$ a.e.\ on $(0,\omega)$. 

Moreover, employing the methods in \cite[Sect.\ 2]{GN12}, Theorem \ref{t4.5}\,$(i)$ 
immediately extends to any choice of self-adjoint separated boundary conditions replacing 
the $\theta$ boundary conditions
\begin{equation}
g(\omega) = e^{i \theta} g(0), \quad g'(\omega) = e^{i \theta} g'(0),  \quad \theta \in [0, 2 \pi],
\end{equation}
in $A^*_{\theta} A_{\theta}$ by separated ones of the type 
\begin{equation}
\sin(\alpha) g'(0) + \cos(\alpha) g(0) =0, \quad \sin(\beta) g'(\omega) + \cos(\beta) g(\omega) =0,
\quad \alpha, \beta \in [0,\pi].
\end{equation}
\end{remark} 

\smallskip

We emphasize that the following Appendices \ref{sA}, \ref{sB}, and \ref{sC} do not 
contain new results. We offer them for the convenience of the reader with the 
goal of providing a fairly self-contained account, enhancing the readability of this manuscript.

\appendix
\section{Relative Boundedness and Compactness \\ of Operators and
Forms} \lb{sA}
\renewcommand{\theequation}{A.\arabic{equation}}
\renewcommand{\thetheorem}{A.\arabic{theorem}}
\setcounter{theorem}{0} \setcounter{equation}{0}

In this appendix we briefly recall the notion of relatively bounded (resp., compact) and relatively form bounded (resp., form compact) perturbations of a self-adjoint operator $A$ in some complex separable Hilbert space $\cH$:

\begin{definition} \lb{dA.1}
$(i)$ Suppose that $A$ is a self-adjoint operator in $\cH$. A closed operator $B$ in $\cH$ is
called {\it relatively bounded} $($resp., {\it relatively compact}\,$)$ {\it with respect to $A$}
$($in short, $B$ is called {\it $A$-bounded} $($resp., {\it $A$-compact}\,$))$, if
\begin{equation}
 \dom(B) \supseteq \dom(A) \, \text{ and } \,  B(A - z I_{\cH})^{-1} \in \cB(\cH) \;
 (\text{resp.,} \in \cB_\infty(\cH)), \quad  z \in \rho(A).   \lb{A.1}
\end{equation}
$(ii)$ Assume that $A$ is self-adjoint and bounded from below $($i.e., $A \ge c I_{\cH}$ for some
$c\in\bbR$$)$. Then a densely defined and closed operator $B$ in $\cH$ is called {\it relatively form bounded} $($resp., {\it relatively form compact}\,$)$ {\it with respect to $A$} $($in short, $B$ is
called {\it $A$-form bounded} $($resp., {\it $A$-form compact}\,$))$, if
\begin{align}
\begin{split}
 & \dom\big(|B|^{1/2}\big) \supseteq \dom\big(|A|^{1/2}\big) \, \text{ and } \, \\
 & |B|^{1/2} ((A + (1 - c) I_{\cH}))^{-1/2} \in \cB(\cH) \;
 (\text{resp.,} \in \cB_\infty(\cH)).
 \lb{A.2}
 \end{split}
\end{align}
\end{definition}

\medskip

\begin{remark} \lb{rA.2}
$(i)$ Using the polar decomposition of $B$ (i.e., $B=U_B|B|$, with $U_B$ a partial isometry), one observes that $B$ is $A$-bounded (resp., $A$-compact) if and only if $|B|$ is $A$-bounded (resp., $A$-compact). Similarly, by \eqref{A.2}, $B$ is
$A$-form bounded (resp., $A$-form compact), if and only if $|B|$ is. \\
$(ii)$ Since$B$ is assumed to be closed (in fact, closability of $B$ suffices) in
Definition \ref{dA.1}\,$(i)$, the first condition $\dom(B) \supseteq \dom(A)$ in
\eqref{A.1} already implies $B(A - z I_{\cH})^{-1} \in \cB(\cH)$, $z \in \rho(A)$,
and hence the $A$-boundedness of $B$ (cf.\ again \cite[Remark IV.1.5]{Ka80},
\cite[Theorem\ 5.9]{We80}). By the same token, since $A^{1/2}$ and $|B|^{1/2}$ are closed, the requirement
$ \dom\big(|B|^{1/2}\big) \supseteq \dom\big(A^{1/2}\big)$ in
Definition \ref{dA.1}\,$(ii)$,  already implies that
$|B|^{1/2} ((A + (1 - c) I_{\cH}))^{-1/2} \in \cB(\cH)$
(cf.\ \cite[Remark IV.1.5]{Ka80}, \cite[Theorem\ 5.9]{We80}), and hence the first condition in \eqref{A.2} suffices in the relatively form bounded context. \\
$(iii)$ In the special case where $B$ is self-adjoint, condition
\eqref{A.2} implies the existence of $\alpha\geq 0$ and $\beta\geq 0$, such that
\begin{align}
\begin{split}
\big|\big(|B|^{1/2}f, \sgn(B) |B|^{1/2}f\big)_{\cH}\big|
\leq \big\||B|^{1/2}f\big\|_{\cH}^2
\leq \alpha \big\||A|^{1/2}f\big\|_{\cH}^2 + \beta \|f\|_{\cH}^2,& \\
f\in\dom\big(|A|^{1/2}\big).&    \lb{A.3}
\end{split}
\end{align}
$(iv)$ In connection with relative boundedness, \eqref{A.1} can be replaced by the condition
\begin{align}
\begin{split}
& \dom(B) \supseteq \dom(A), \, \text{ and there exist numbers
$a\ge 0$, $b\ge 0$ such that} \\
& \|Bf\|_{\cH} \le a \|Af\|_{\cH} + b \|f\|_{\cH} \, \text{ for all $f\in\dom(A)$,}
\lb{A.4}
\end{split}
\end{align}
or equivalently, by
\begin{align}
\begin{split}
& \dom(B) \supseteq \dom(A), \, \text{ and there exist numbers $\wti a\ge 0$, $\wti b\ge 0$ such that} \\
& \|Bf\|^2_{\cH} \le {\wti a}^2 \|Af\|^2_{\cH} + {\wti b}^2 \|f\|^2_{\cH} \, \text{ for all $f\in\dom(A)$.}     \lb{A.5}
\end{split}
\end{align}
$(v)$ If $A$ is self-adjoint and bounded from below, the number $\alpha$ defined by
\begin{equation}
\alpha = \lim_{\mu\uparrow \infty} \big\|B(A+\mu I_{\cH})^{-1}\big\|_{\cB(\cH)}
= \lim_{\mu\uparrow \infty} \big\||B|(A+\mu I_{\cH})^{-1}\big\|_{\cB(\cH)}
\lb{A.6}
\end{equation}
equals the greatest lower bound (i.e., the infimum) of the possible values for $a$ in \eqref{A.4} (resp., for
$\wti a$ in \eqref{A.5}). This number $\alpha$ is called the $A$-bound of $B$. Similarly, we call
\begin{equation}
\beta =
\lim_{\mu\uparrow \infty} \big\||B|^{1/2}\big(|A|^{1/2}+\mu I_{\cH}\big)^{-1}\big\|_{\cB(\cH)}     \lb{A.7}
\end{equation}
the $A$-form bound of $B$ (resp., $|B|$). If $\alpha =0$ in \eqref{A.6} (resp., $\beta =0$ in \eqref{A.7}) then $B$ is called {\it infinitesimally bounded} (resp., {\it infinitesimally form bounded}\,) with respect to $A$.
\end{remark}

We then have the following result:

\begin{theorem} \lb{tA.3}
Assume that $A\ge 0$ is self-adjoint in $\cH$. \\
$(i)$ Let $B$ be a closed, densely defined operator in $\cH$ and suppose that
$\dom(B)\supseteq\dom(A)$. Then $B$ is $A$-bounded and hence \eqref{A.4} holds for some constants $a\geq 0$, $b\geq 0$. In addition, $B$ is also $A$-form bounded,
\begin{equation}
 |B|^{1/2} (A +I_{\cH})^{-1/2} \in \cB(\cH).
\lb{A.8}
\end{equation}
More specifically,
\begin{equation}
\big\||B|^{1/2}(A+I_{\cH})^{-1/2}\big\|_{\cB(\cH)} \le (a+b)^{1/2},    \lb{A.9}
\end{equation}
and hence, if $B$ is $A$-bounded with $A$-bound $\alpha$ strictly less than one, $0\leq \alpha<1$ $($cf.\ \eqref{A.6}$)$, then $B$ is also $A$-form bounded with $A$-form bound $\beta$ strictly less than one, $0\leq \beta <1$ $($cf.\ \eqref{A.7}$)$. In particular, if $B$ is infinitesimally bounded with respect to $A$, then $B$ is infinitesimally form bounded with respect to $A$. \\
$(ii)$ Suppose that $B$ is self-adjoint in $\cH$, that
$\dom(B) \supseteq\dom(A)$, and hence \eqref{A.4} holds for some constants
$a\geq 0$, $b\geq 0$. Then
\begin{align}
& \ol{(A +  I_{\cH})^{-1/2} B (A +  I_{\cH})^{-1/2}} \in \cB(\cH),   \lb{A.10}  \\
& \big\|\ol{(A+I_{\cH})^{-1/2}B(A+I_{\cH})^{-1/2}}\big\|_{\cB(\cH)}
 \le (a+b).   \lb{A.11}
\end{align}
\end{theorem}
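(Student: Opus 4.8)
The plan is to prove part $(i)$ first — the crucial point being the operator monotonicity of $t\mapsto t^{1/2}$ (Heinz's inequality) — and then to obtain part $(ii)$ as an essentially formal consequence. For part $(i)$: since $B$ is closed and $\dom(B)\supseteq\dom(A)=\ran\big((A+I_{\cH})^{-1}\big)$, the operator $B(A+I_{\cH})^{-1}$ is everywhere defined and closed, hence bounded by the closed graph theorem (this is Remark \ref{rA.2}\,$(ii)$); writing $Bf=B(A+I_{\cH})^{-1}(A+I_{\cH})f$ and estimating $\|(A+I_{\cH})f\|_{\cH}\le\|Af\|_{\cH}+\|f\|_{\cH}$ shows that \eqref{A.4} holds, e.g.\ with $a=b=\|B(A+I_{\cH})^{-1}\|_{\cB(\cH)}$. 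Next, fix any admissible pair $a,b\ge0$ in \eqref{A.4}. Because $A\ge0$ one has $\|Af\|_{\cH}\le\|(A+I_{\cH})f\|_{\cH}$ and $\|f\|_{\cH}\le\|(A+I_{\cH})f\|_{\cH}$, whence $\|Bf\|_{\cH}\le(a+b)\|(A+I_{\cH})f\|_{\cH}$; since $\|Bf\|_{\cH}=\||B|f\|_{\cH}$ by the polar decomposition of $B$, this reads
\[
\||B|f\|_{\cH}\le\big\|(a+b)(A+I_{\cH})f\big\|_{\cH},\qquad f\in\dom(A).
\]

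Now I would invoke Heinz's inequality: if $S,R\ge0$ are self-adjoint with $\dom(R)\subseteq\dom(S)$ and $\|Sf\|_{\cH}\le\|Rf\|_{\cH}$ for all $f\in\dom(R)$, then $\dom\big(R^{1/2}\big)\subseteq\dom\big(S^{1/2}\big)$ and $\|S^{1/2}f\|_{\cH}\le\|R^{1/2}f\|_{\cH}$ for $f\in\dom\big(R^{1/2}\big)$ (see, e.g., \cite{Ka80}). Applying this with $S=|B|$ and $R=(a+b)(A+I_{\cH})$ yields $\dom\big(A^{1/2}\big)=\dom\big((A+I_{\cH})^{1/2}\big)\subseteq\dom\big(|B|^{1/2}\big)$ together with $\||B|^{1/2}f\|_{\cH}\le(a+b)^{1/2}\|(A+I_{\cH})^{1/2}f\|_{\cH}$, which is precisely \eqref{A.8}, \eqref{A.9}. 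The remaining assertions of part $(i)$ follow by rescaling: if $B$ has $A$-bound $\alpha$, then for every $a'>\alpha$ there is $b'$ with $\|Bf\|_{\cH}\le(a'+b'/\mu)\|(A+\mu I_{\cH})f\|_{\cH}$ for all $\mu>0$, and running the previous argument with $A+\mu I_{\cH}$ in place of $A$, comparing $(A^{1/2}+\mu I_{\cH})^{-1}$ with $(A+\mu^2 I_{\cH})^{-1/2}$ spectrally, and letting $\mu\uparrow\infty$, gives $\beta\le\alpha^{1/2}$ for the $A$-form bound \eqref{A.7}; in particular $\alpha<1$ forces $\beta<1$, and $\alpha=0$ forces $\beta=0$.

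For part $(ii)$: since now $B=B^*$, part $(i)$ already provides $|B|^{1/2}(A+I_{\cH})^{-1/2}\in\cB(\cH)$ with $\big\||B|^{1/2}(A+I_{\cH})^{-1/2}\big\|_{\cB(\cH)}\le(a+b)^{1/2}$ and $\dom\big(A^{1/2}\big)\subseteq\dom\big(|B|^{1/2}\big)$. On the dense domain $\dom\big(A^{1/2}\big)$ the operator $M_0:=(A+I_{\cH})^{-1/2}B(A+I_{\cH})^{-1/2}$ is well defined, since $(A+I_{\cH})^{-1/2}$ maps $\dom\big(A^{1/2}\big)$ into $\dom(A)\subseteq\dom(B)$, and it is symmetric because $B$ and $(A+I_{\cH})^{-1/2}$ are self-adjoint; hence $M_0$ is closable. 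For $f\in\dom\big(A^{1/2}\big)$ put $u=(A+I_{\cH})^{-1/2}f\in\dom(B)$; the spectral theorem for $B$, with spectral family $\{E_B(\lambda)\}_{\lambda\in\bbR}$, gives
\begin{align*}
|(M_0 f,f)_{\cH}| &= |(Bu,u)_{\cH}| = \bigg|\int_{\bbR}\lambda\,d(E_B(\lambda)u,u)_{\cH}\bigg| \\
&\le \int_{\bbR}|\lambda|\,d(E_B(\lambda)u,u)_{\cH} = \big\||B|^{1/2}u\big\|_{\cH}^2,
\end{align*}
and by part $(i)$ the right-hand side equals $\big\||B|^{1/2}(A+I_{\cH})^{-1/2}f\big\|_{\cH}^2\le(a+b)\|f\|_{\cH}^2$. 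A densely defined symmetric operator whose quadratic form is bounded by $C$ extends by continuity to a bounded self-adjoint operator of norm $\le C$; applied to $M_0$ with $C=a+b$ this yields \eqref{A.10}, \eqref{A.11}.

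The step I expect to be the main obstacle — indeed the only point that is not completely routine — is obtaining the sharp constant $(a+b)^{1/2}$ in part $(i)$. A naive estimate such as $\||B|^{1/2}f\|_{\cH}^2=(|B|f,f)_{\cH}\le\|Bf\|_{\cH}\|f\|_{\cH}\le(a\|Af\|_{\cH}+b\|f\|_{\cH})\|f\|_{\cH}$ is useless here, because $\|Af\|_{\cH}\|f\|_{\cH}$ is not controlled by the quadratic form $\|A^{1/2}f\|_{\cH}^2+\|f\|_{\cH}^2$; one is forced to first establish the operator inequality $|B|^2\le(a+b)^2(A+I_{\cH})^2$ and then invoke operator monotonicity of the square root. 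Once part $(i)$ is in hand, part $(ii)$ is comparatively soft: self-adjointness of $B$ lets one replace $|(Bu,u)_{\cH}|$ by $\||B|^{1/2}u\|_{\cH}^2$, which part $(i)$ already bounds.
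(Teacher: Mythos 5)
Your argument is correct. Note, however, that the paper itself gives no proof of Theorem \ref{tA.3}: at the end of Appendix \ref{sA} it simply defers to \cite{GMMN09} (where the result is proved under more general hypotheses), so there is no in-paper proof to compare against. Your route --- closed graph theorem for $B(A+I_{\cH})^{-1}$, the estimate $\|Bf\|_{\cH}\le(a+b)\|(A+I_{\cH})f\|_{\cH}$, Heinz's inequality (operator monotonicity of the square root) to pass from $|B|^2\le(a+b)^2(A+I_{\cH})^2$ to $|B|\le(a+b)(A+I_{\cH})$, and then the numerical-radius bound for the symmetric operator $(A+I_{\cH})^{-1/2}B(A+I_{\cH})^{-1/2}$ in part $(ii)$ --- is the standard one and is consistent with the approach of the cited reference; the rescaling $A\mapsto A+\mu I_{\cH}$ correctly yields $\beta\le\alpha^{1/2}$ and hence the infinitesimal-boundedness assertion.
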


We also recall the following result:

\begin{theorem} \lb{tA.4}
Assume that $A\ge 0$ is self-adjoint in $\cH$. \\
$(i)$ Let $B$ be a densely defined closed operator in $\cH$ and suppose that
$\dom(B)\supseteq\dom(A)$. In addition, assume that $B$ is $A$-compact. Then $B$ is also $A$-form compact,
\begin{equation}
|B|^{1/2} (A+I_{\cH})^{-1/2} \in\cB_\infty (\cH).     \lb{A.12}
\end{equation}
$(ii)$ Suppose that $B$ is self-adjoint in $\cH$ and that
$\dom(B) \supseteq \dom(A)$. In addition, assume that $B$ is
$A$-compact. Then
\begin{equation}
 \ol{(A+I_{\cH})^{-1/2} B (A+I_{\cH})^{-1/2}}
  \in \cB_\infty (\cH).     \lb{A.13}
\end{equation}
\end{theorem}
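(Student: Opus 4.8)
The plan is to reduce statement $(ii)$ to $(i)$, then reduce $(i)$ to the case of a nonnegative self-adjoint operator, and finally prove the resulting core claim by an operator-norm approximation built from the spectral projections of $A$.

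For $(ii)$, with $B = B^*$ I would use the spectral decomposition $B = |B|^{1/2} \sgn(B) |B|^{1/2}$: on the dense set $\dom(A^{1/2})$ this yields $(A + I_{\cH})^{-1/2} B (A + I_{\cH})^{-1/2} = \big[|B|^{1/2}(A+I_{\cH})^{-1/2}\big]^* \sgn(B) \big[|B|^{1/2}(A+I_{\cH})^{-1/2}\big]$, so since $\cB_\infty(\cH)$ is a two-sided ideal it suffices to know $|B|^{1/2}(A+I_{\cH})^{-1/2} \in \cB_\infty(\cH)$, which is exactly $(i)$ applied to $|B|$. By Remark \ref{rA.2}\,$(i)$, $B$ is $A$-compact iff $|B|$ is, so $(i)$ reduces to the claim: if $C := |B| \geq 0$ is self-adjoint and $A$-compact, then $C^{1/2}(A + I_{\cH})^{-1/2} \in \cB_\infty(\cH)$. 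First I would record, using Theorem \ref{tA.3}\,$(i)$, that $K := C^{1/2}(A+I_{\cH})^{-1/2} \in \cB(\cH)$, and, using Theorem \ref{tA.3}\,$(ii)$, that $L := K^* K = \overline{(A+I_{\cH})^{-1/2} C (A+I_{\cH})^{-1/2}} \in \cB(\cH)$, $L \geq 0$; since $K \in \cB_\infty(\cH)$ if and only if $L \in \cB_\infty(\cH)$, everything comes down to showing that $L$ is compact.

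Let $R_n = \chi_{[0,n]}(A)$ denote the spectral projections of $A$. The factorization $L R_n = \big[(A+I_{\cH})^{-1/2}\, C (A+I_{\cH})^{-1}\big]\big[(A+I_{\cH})^{1/2} R_n\big]$ exhibits $L R_n$ as a product of a compact operator (a bounded operator times the $A$-compact operator $C(A+I_{\cH})^{-1}$) and a bounded one, hence $L R_n \in \cB_\infty(\cH)$; the operators $R_n L R_n$, $R_n L (I_{\cH} - R_n) = (L R_n)^*(I_{\cH} - R_n)$, and $(I_{\cH} - R_n) L R_n$ are then compact as well, so that $L$ is compact as soon as $\|(I_{\cH} - R_n) L (I_{\cH} - R_n)\| \to 0$. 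This estimate is the heart of the matter. Since $A$-compact operators are infinitesimally $A$-bounded, Theorem \ref{tA.3}\,$(i)$ shows $C$ is infinitesimally $A$-form bounded: for every $\varepsilon > 0$ there is $C_\varepsilon > 0$ with $\|C^{1/2} g\|^2 \leq \varepsilon \|(A+I_{\cH})^{1/2} g\|^2 + C_\varepsilon \|g\|^2$ for $g \in \dom(A^{1/2})$. On the other hand, any $v \in \ran(I_{\cH} - R_n)$ satisfies $\|v\|^2 \leq (n+1)^{-1}\|(A+I_{\cH})^{1/2} v\|^2$. Writing $\|(I_{\cH}-R_n) L (I_{\cH}-R_n)\| = \|K(I_{\cH}-R_n)\|^2 = \sup_{\|f\| \leq 1}\|C^{1/2} v_f\|^2$ with $v_f := (A+I_{\cH})^{-1/2}(I_{\cH}-R_n) f \in \ran(I_{\cH}-R_n)$ and $\|(A+I_{\cH})^{1/2} v_f\| = \|(I_{\cH}-R_n)f\| \leq 1$, the two bounds combine to give $\|(I_{\cH}-R_n) L (I_{\cH}-R_n)\| \leq \varepsilon + C_\varepsilon (n+1)^{-1}$; sending $n \to \infty$ and then $\varepsilon \downarrow 0$ finishes the argument, and $(ii)$ then follows from the identity above. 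The one genuinely delicate point is precisely this last estimate: plain $A$-form boundedness of $C$ would only produce a bound of the shape $\alpha + o(1)$ with $\alpha > 0$ fixed, so it is essential that $\varepsilon$ may be taken arbitrarily small, which is exactly what $A$-compactness (equivalently, the vanishing of the $A$-bound of $C$) supplies.
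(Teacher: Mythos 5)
Your proof is correct. Note first that the paper itself does not prove Theorem \ref{tA.4}; it defers to \cite{GMMN09}, so there is no in-text argument to compare against, and a self-contained proof along your lines is a genuine addition. The architecture is sound: reducing $(ii)$ to $(i)$ via $B=|B|^{1/2}\sgn(B)|B|^{1/2}$ and the ideal property of $\cB_\infty(\cH)$; reducing $(i)$ to $C=|B|\ge 0$ via Remark \ref{rA.2}\,$(i)$; and, crucially, passing from $K=C^{1/2}(A+I_{\cH})^{-1/2}$ to $L=K^*K$ before truncating with the spectral projections $R_n=\chi_{[0,n]}(A)$. That last step is the right move, since it is $C(A+I_{\cH})^{-1}$ (not $C^{1/2}(A+I_{\cH})^{-1}$) that is known to be compact, and your factorization $LR_n=\big[(A+I_{\cH})^{-1/2}C(A+I_{\cH})^{-1}\big]\big[(A+I_{\cH})^{1/2}R_n\big]$ exploits exactly that. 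The tail estimate $\|(I_{\cH}-R_n)L(I_{\cH}-R_n)\|=\|K(I_{\cH}-R_n)\|^2\le \varepsilon+C_\varepsilon(n+1)^{-1}$ is correct, and you rightly identify that infinitesimal (rather than mere) form boundedness is what makes the argument close. The one ingredient you invoke without justification is that $A$-compactness of $C$ forces its $A$-bound to vanish (so that Theorem \ref{tA.3}\,$(i)$ yields infinitesimal form boundedness); for self-adjoint $A\ge 0$ this is standard and follows in one line from $C(A+\mu I_{\cH})^{-1}=\big[C(A+I_{\cH})^{-1}\big]\big[(A+I_{\cH})(A+\mu I_{\cH})^{-1}\big]$, where the second factor is self-adjoint, uniformly bounded, and tends strongly to zero as $\mu\uparrow\infty$, so that its product with a fixed compact operator tends to zero in norm (cf.\ Remark \ref{rA.2}\,$(v)$). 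Including that one sentence would make the proof fully self-contained.
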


For proofs of Theorems \ref{tA.3} and \ref{tA.4} under more general conditions on $A$ and $B$,
we refer to \cite{GMMN09} and the detailed list of references therein.

\section{Supersymmetric Dirac-Type Operators in a Nutshell} \lb{sB}
\renewcommand{\theequation}{B.\arabic{equation}}
\renewcommand{\thetheorem}{B.\arabic{theorem}}
\setcounter{theorem}{0} \setcounter{equation}{0}

In this appendix we briefly summarize some results on supersymmetric
Dirac-type operators and commutation methods due to \cite{De78},
\cite{GSS91}, \cite{Th88}, and \cite[Ch.\ 5]{Th92} (see also \cite{HKM00}).

The standing assumption in this appendix will be the following.

\begin{hypothesis} \lb{hB.1}
Let $\cH_j$, $j=1,2$, be separable complex Hilbert spaces and
\begin{equation}
A: \cH_1 \supseteq \dom(A) \to \cH_2    \lb{B.1}
\end{equation}
be a densely defined, closed, linear operator.
\end{hypothesis}

We define the self-adjoint Dirac-type operator in $\cH_1 \oplus \cH_2$ by
\begin{equation}
Q = \begin{pmatrix} 0 & A^* \\ A & 0 \end{pmatrix}, \quad \dom(Q) = \dom(A) \oplus \dom(A^*).
\lb{B.2}
\end{equation}
Operators of the type $Q$ play a role in supersymmetric quantum mechanics (see, e.g., the extensive list of references in \cite{BGGSS87}). Then,
\begin{equation}
Q^2 = \begin{pmatrix} A^* A & 0 \\ 0 & A A^* \end{pmatrix}     \lb{B.3}
\end{equation}
and for notational purposes we also introduce
\begin{equation}
H_1 = A^* A \, \text{ in } \, \cH_1, \quad H_2 = A A^* \, \text{ in } \, \cH_2.    \lb{B.4}
\end{equation}
In the following, we also need the polar decomposition of $A$ and $A^*$, that is, the representations
\begin{align}
A& = V_A |A| = |A^*| V_A = V_A A^* V_A  \, \text{ on } \, \dom(A) = \dom(|A|),
\label{B.4a} \\
A^*& = V_{A^*} |A^*| = |A|V_{A^*} = V_{A^*} A V_{A^*}   \, \text{ on } \,
\dom(A^*) = \dom(|A^*|),   \label{B.4b} \\
|A|& = V_{A^*} A = A^* V_A = V_{A^*} |A^*| V_A \, \text{ on } \, \dom(|A|),    \label{B.4c} \\
 |A^*|& = V_A A^* = A V_{A^*} = V_A |A| V_{A^*}  \, \text{ on } \, \dom(|A^*|),
 \label{B.4d}
\end{align}
where
\begin{align}
& |A| = (A^* A)^{1/2}, \quad |A^*| = (A A^*)^{1/2},  \quad
V_{A^*} = (V_A)^*,     \lb{B.6a} \\
& V_{A^*} V_A = P_{\ol{{\ran}(|A|)}}=P_{\ol{{\ran}(A^*)}} \, ,  \quad
V_A V_{A^*} = P_{\ol{{\ran}(|A^*|)}}=P_{\ol{{\ran}(A)}} \, .      \label{B.4e}
\end{align}
In particular, $V_A$ is a partial isometry with initial set $\ol{{\ran}(|A|)}$
and final set $\ol{{\ran}(A)}$ and hence $V_{A^*}$ is a partial isometry with initial
set $\ol{\ran(|A^*|)}$ and final set $\ol{\ran(A^*)}$. In addition,
\begin{equation}
V_A = \begin{cases} \ol{A (A^* A)^{-1/2}} = \ol{(A A^*)^{-1/2}A} & \text{on }  (\ker (A))^{\bot},  \\
0 & \text{on }  \ker (A).  \end{cases}     \lb{B.7}
\end{equation}

Next, we collect some properties relating $H_1$ and $H_2$.

\begin{theorem} [\cite{De78}] \lb{tB.2}
Assume Hypothesis \ref{hB.1} and let $\phi$ be a bounded Borel measurable
function on $\bbR$. \\
$(i)$ One has
\begin{align}
& \ker(A) = \ker(H_1) = (\ran(A^*))^{\bot}, \quad \ker(A^*) = \ker(H_2) = (\ran(A))^{\bot},  \lb{B.8} \\
& V_A H_1^{n/2} = H_2^{n/2} V_A, \; n\in\bbN, \quad
V_A \phi(H_1) = \phi(H_2) V_A.  \lb{B.9}
\end{align}
$(ii)$ $H_1$ and $H_2$ are essentially isospectral, that is,
\begin{equation}
\sigma(H_1)\backslash\{0\} = \sigma(H_2)\backslash\{0\},    \lb{B.10}
\end{equation}
in fact,
\begin{equation}
A^* A [I_{\cH_1} - P_{\ker(A)}] \, \text{ is unitarily equivalent to } \, A A^* [I_{\cH_2} - P_{\ker(A^*)}].
\lb{B.10a}
\end{equation}
In addition,
\begin{align}
& f\in \dom(H_1) \, \text{ and } \, H_1 f = \lambda^2 f, \; \lambda \neq 0,   \no \\
& \quad \text{implies }  \,  A f \in \dom(H_2) \, \text{ and } \, H_2(A f) = \lambda^2 (A f),    \lb{B.11} \\
& g\in \dom(H_2)\, \text{ and } \, H_2 \, g = \mu^2 g, \; \mu \neq 0,     \no \\
& \quad \text{implies }  \, A^* g \in \dom(H_1)\, \text{ and } \, H_1(A^* g) = \mu^2 (A^* g),    \lb{B.12}
\end{align}
with multiplicities of eigenvalues preserved. \\
$(iii)$ One has for $z \in \rho(H_1) \cap \rho(H_2)$,
\begin{align}
& I_{\cH_2} + z (H_2 - z I_{\cH_2})^{-1} \supseteq A (H_1 - z I_{\cH_1})^{-1} A^*,    \lb{B.13} \\
& I_{\cH_1} + z (H_1 - z I_{\cH_1})^{-1} \supseteq A^* (H_2 - z I_{\cH_2})^{-1} A,    \lb{B.14}
\end{align}
and
\begin{align}
& A^* \phi(H_2) \supseteq \phi(H_1) A^*, \quad
A \phi(H_1) \supseteq \phi(H_2) A,   \lb {B.15} \\
& V_{A^*} \phi(H_2) \supseteq \phi(H_1) V_{A^*}, \quad
V_A \phi(H_1) \supseteq \phi(H_2) V_A.   \lb {B.15a}
\end{align}
\end{theorem}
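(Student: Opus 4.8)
The plan is to derive everything from the polar decomposition relations \eqref{B.4a}--\eqref{B.7}, whose central object is the restriction $\wti V_A$ of the partial isometry $V_A$ to $(\ker(A))^\bot$. \textbf{First,} for the kernel identities in $(i)$: for $f\in\dom(H_1)\subseteq\dom(A)$ one has $\|Af\|_{\cH_2}^2=(H_1f,f)_{\cH_1}$, so $Af=0$ iff $H_1f=0$, giving $\ker(A)=\ker(H_1)$, while $\ker(A)=(\ran(A^*))^\bot$ is the standard identity for a densely defined closed operator; the statements for $A^*,H_2$ follow by interchanging $A\leftrightarrow A^*$. In particular $\ol{\ran(A^*)}=(\ker(H_1))^\bot$ and $\ol{\ran(A)}=(\ker(H_2))^\bot$, so that the orthogonal decompositions $\cH_1=\ker(H_1)\oplus\ol{\ran(A^*)}$ and $\cH_2=\ker(H_2)\oplus\ol{\ran(A)}$ reduce $H_1$ and $H_2$; I write $H_1=0\oplus\wti H_1$, $H_2=0\oplus\wti H_2$ accordingly.

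\textbf{Next,} I would prove part $(ii)$. By \eqref{B.4e}, $V_A$ has initial set $\ol{\ran(A^*)}=(\ker(H_1))^\bot$ and final set $\ol{\ran(A)}=(\ker(H_2))^\bot$, so $\wti V_A$ is unitary from $(\ker(H_1))^\bot$ onto $(\ker(H_2))^\bot$; restricting \eqref{B.4a} in the form $|A^*|V_A=V_A|A|$ to these reducing subspaces gives $\wti H_2^{1/2}\wti V_A=\wti V_A\wti H_1^{1/2}$, i.e.\ $\wti V_A$ implements a unitary equivalence of $\wti H_1$ and $\wti H_2$. This is \eqref{B.10a}, and since $\sigma(H_j)\backslash\{0\}=\sigma(\wti H_j)\backslash\{0\}$ for $j=1,2$ it yields \eqref{B.10}. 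For \eqref{B.11} I would argue directly: if $f\in\dom(H_1)$, $H_1f=\lambda^2f$, $\lambda\neq0$, then $A^*(Af)=H_1f=\lambda^2f\in\dom(A)$, so $Af\in\dom(AA^*)=\dom(H_2)$ and $H_2(Af)=A(A^*Af)=\lambda^2(Af)$; moreover $Af\neq0$ (else $\lambda^2f=A^*Af=0$), and $f\mapsto Af$ is injective on $\ker(H_1-\lambda^2I_{\cH_1})$ by the same reasoning, so eigenvalue multiplicities are preserved; \eqref{B.12} is the mirror statement.

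\textbf{Then} come the intertwining relations in $(i)$ and $(iii)$. Starting from $V_AH_1^{1/2}=H_2^{1/2}V_A$ (this is again \eqref{B.4a}), for $z\notin[0,\infty)$ one sets $h=(H_1^{1/2}-z)^{-1}g$ and applies $V_A$ to $(H_1^{1/2}-z)h=g$ to obtain $V_A(H_1^{1/2}-z)^{-1}=(H_2^{1/2}-z)^{-1}V_A$ on $\cH_1$; by the standard fact that such an intertwining persists under uniformly bounded pointwise limits of the functional-calculus symbol (strong operator convergence, together with closedness of the relevant operator where it is unbounded), it extends to $V_A\psi(H_1^{1/2})=\psi(H_2^{1/2})V_A$ for all bounded Borel $\psi$, and with $\psi(t)=\phi(t^2)$ this gives $V_A\phi(H_1)=\phi(H_2)V_A$, while iterating the half-power relation on the reducing subspaces of Step 1 gives $V_AH_1^{n/2}=H_2^{n/2}V_A$; applying the same to $V_{A^*}=V_A^*$ via $|A|V_{A^*}=V_{A^*}|A^*|$ from \eqref{B.4b} yields $V_{A^*}\phi(H_2)=\phi(H_1)V_{A^*}$, so \eqref{B.15a} follows. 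For \eqref{B.13}, fixing $z\in\rho(H_1)\cap\rho(H_2)$ and $g\in\dom(A^*)$ and putting $h=(H_1-zI_{\cH_1})^{-1}A^*g\in\dom(H_1)$, one gets $A^*(Ah-g)=(H_1-zI_{\cH_1})h=zh\in\dom(A)$, hence $Ah-g\in\dom(AA^*)=\dom(H_2)$ with $(H_2-zI_{\cH_2})(Ah-g)=zg$, so $A(H_1-zI_{\cH_1})^{-1}A^*g=g+z(H_2-zI_{\cH_2})^{-1}g$ on the dense set $\dom(A^*)$; \eqref{B.14} is obtained by interchanging $A\leftrightarrow A^*$. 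The same computation also yields $A^*(H_2-zI_{\cH_2})^{-1}g=(H_1-zI_{\cH_1})^{-1}A^*g$ for $g\in\dom(A^*)$, which, since $A^*$ is closed, bootstraps by the uniformly-bounded-pointwise-limit argument to $A^*\phi(H_2)\supseteq\phi(H_1)A^*$, and symmetrically $A\phi(H_1)\supseteq\phi(H_2)A$, completing \eqref{B.15}.

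I expect the main obstacle to be purely technical and of two kinds: (a) the passage from the resolvent intertwinings to arbitrary bounded Borel $\phi$, i.e.\ the routine but slightly delicate monotone-class / Baire-hierarchy argument that the intertwining identities survive uniformly bounded pointwise limits; and (b) the domain bookkeeping in \eqref{B.13}--\eqref{B.14}, where one must verify that the intermediate vectors land in the correct domains of $A$, $A^*$, $AA^*$, $A^*A$. The borderline case $0\in\rho(H_1)\cap\rho(H_2)$ (which forces $\ker(A)=\ker(A^*)=\{0\}$ and makes $V_A$ unitary) can be handled either by continuity in $z$ or directly; everything else reduces to the polar decomposition facts already recorded in \eqref{B.4a}--\eqref{B.7}.
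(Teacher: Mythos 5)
Your proposal is correct, but it takes a genuinely different route from the paper. The paper's entire proof is Nelson's trick: assemble $A$ and $A^*$ into the single self-adjoint Dirac-type operator $Q=\left(\begin{smallmatrix} 0 & A^* \\ A & 0\end{smallmatrix}\right)$ and observe that every assertion of the theorem is the componentwise reading of one of the elementary identities \eqref{B.16}--\eqref{B.19} for $Q$, each of which is an immediate consequence of the spectral theorem applied to $Q$ itself. In particular, $V_A\phi(H_1)=\phi(H_2)V_A$ and $A\phi(H_1)\supseteq\phi(H_2)A$ are just the off-diagonal blocks of $V_Q\psi(Q^2)=\psi(Q^2)V_Q$ and $Q\psi(Q^2)\supseteq\psi(Q^2)Q$ with $\psi$ built from $\phi$, and \eqref{B.13}--\eqref{B.14} are the diagonal blocks of $Q(Q^2-zI)^{-1}Q\subseteq I+z(Q^2-zI)^{-1}$. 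This buys you exactly the step you single out as the main obstacle: there is no resolvent-to-Borel bootstrap at all, because the commutation of $Q$ with bounded Borel functions of $Q^2$ is already part of the spectral theorem for the single operator $Q$. Your componentwise argument -- kernels from $\|Af\|_{\cH_2}^2=(H_1f,f)_{\cH_1}$, unitary equivalence via the restricted partial isometry, direct domain chasing for the resolvent identities, and a monotone-class extension for general $\phi$ -- is sound and has the virtue of never leaving $\cH_1$ and $\cH_2$, at the cost of having to carry out the uniformly-bounded-pointwise-limit argument and the domain bookkeeping by hand; the paper outsources both to the spectral theorem on $\cH_1\oplus\cH_2$.
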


As noted by E.\ Nelson (unpublished), Theorem \ref{tB.2} follows from the spectral theorem and the
elementary identities,
\begin{align}
& Q = V_Q |Q| = |Q| V_Q,    \lb{B.16} \\
& \ker(Q) = \ker(|Q|) = \ker (Q^2) = (\ran(Q))^{\bot}
= \ker (A) \oplus \ker (A^*),    \lb{B.17}  \\
& I_{\cH_1 \oplus \cH_2} + z (Q^2 - z I_{\cH_1 \oplus \cH_2})^{-1}
= Q^2 (Q^2 -z I_{\cH_1 \oplus \cH_2})^{-1} \supseteq Q (Q^2 -z I_{\cH_1 \oplus \cH_2})^{-1} Q,  \no \\
& \hspace*{9.3cm}   z \in \rho(Q^2),    \lb{B.18} \\
& Q \phi(Q^2) \supseteq \phi(Q^2) Q,   \lb{B.19}
\end{align}
where
\begin{equation}
V_Q = \begin{pmatrix} 0 & (V_A)^* \\ V_A & 0 \end{pmatrix}
= \begin{pmatrix} 0 & V_{A^*} \\ V_A & 0 \end{pmatrix}.   \lb{B.20}
\end{equation}

In particular,
\begin{equation}
\ker(Q) = \ker(A) \oplus \ker(A^*), \quad
P_{\ker(Q)} = \begin{pmatrix} P_{\ker(A)} & 0 \\ 0 & P_{\ker(A^*)} \end{pmatrix},    \lb{B.21}
\end{equation}
and we also recall that
\begin{equation}
\mathfrak{S}_3 Q \mathfrak{S}_3 = - Q, \quad \mathfrak{S}_3
= \begin{pmatrix} I_{\cH_1} & 0 \\ 0 & - I_{\cH_2}
\end{pmatrix},     \lb{B.22}
\end{equation}
that is, $Q$ and $-Q$ are unitarily equivalent. (For more details on Nelson's
trick see also \cite[Sect.\ 8.4]{Te09}, \cite[Subsect.\ 5.2.3]{Th92}.)
We also note that
\begin{equation}
\psi(|Q|) = \begin{pmatrix} \psi(|A|) & 0 \\ 0 & \psi(|A^*|) \end{pmatrix}    \lb{B.22a}
\end{equation}
for Borel measurable functions $\psi$ on $\bbR$, and
\begin{equation}
\ol{[Q |Q|^{-1}]} = \begin{pmatrix} 0 & (V_A)^*\\ V_A & 0 \end{pmatrix} = V_Q
\, \text{ if } \, \ker(Q) = \{0\}.   \lb{B.22b}
\end{equation}

Finally, we recall the following relationships between $Q$ and $H_j$, $j=1,2$.

\begin{theorem} [\cite{BGGSS87}, \cite{Th88}] \lb{tB.3}
Assume Hypothesis \ref{hB.1}. \\
$(i)$ Introducing the unitary operator $U$ on $(\ker(Q))^{\bot}$ by
\begin{equation}
U = 2^{-1/2} \begin{pmatrix} I_{\cH_1} & (V_A)^* \\ -V_A & I_{\cH_2} \end{pmatrix}
\, \text{ on } \,  (\ker(Q))^{\bot},     \lb{B.23}
\end{equation}
one infers that
\begin{equation}
U Q U^{-1} = \begin{pmatrix}  |A| & 0 \\ 0 & - |A^*| \end{pmatrix}
\, \text{ on } \,  (\ker(Q))^{\bot}.     \lb{B.24}
\end{equation}
$(ii)$ One has
\begin{align}
\begin{split}
(Q - \zeta I_{\cH_1 \oplus \cH_2})^{-1} = \begin{pmatrix} \zeta (H_1 - \zeta^2 I_{\cH_1})^{-1}
& A^* (H_2 - \zeta^2 I_{\cH_2})^{-1}  \\  A (H_1 - \zeta^2 I_{\cH_1})^{-1}  &
\zeta (H_2 - \zeta^2 I_{\cH_2})^{-1}  \end{pmatrix},&    \\
\zeta^2 \in \rho(H_1) \cap \rho(H_2).&   \lb{B.25}
\end{split}
\end{align}
$(iii)$ In addition,
\begin{align}
\begin{split}
& \begin{pmatrix} f_1 \\ f_2 \end{pmatrix} \in \dom(Q) \, \text{ and } \,
Q \begin{pmatrix} f_1 \\ f_2 \end{pmatrix} = \eta \begin{pmatrix} f_1 \\ f_2 \end{pmatrix}, \;
\eta \neq 0,
 \\
& \quad \text{ implies } \, f_j \in \dom (H_j) \, \text{ and } \, H_j f_j = \eta^2 f_j, \; j=1,2.    \lb{B.26}
\end{split}
\end{align}
Conversely,
\begin{align}
\begin{split}
& f \in \dom(H_1) \, \text{ and } H_1 f = \lambda^2 f, \; \lambda \neq 0, \\
& \quad \text{implies } \, \begin{pmatrix} f \\ \lambda^{-1} A f \end{pmatrix} \in \dom(Q) \, \text{ and } \,
Q \begin{pmatrix} f \\ \lambda^{-1} A f \end{pmatrix}
= \lambda \begin{pmatrix} f \\ \lambda^{-1} A f \end{pmatrix}.
\end{split}
\end{align}
Similarly,
\begin{align}
\begin{split}
& g \in \dom(H_2) \, \text{ and } H_2 \, g = \mu^2 g, \; \mu \neq 0, \\
& \quad \text{implies } \, \begin{pmatrix} \mu^{-1} A^* g \\ g \end{pmatrix} \in \dom(Q) \, \text{ and } \,
Q \begin{pmatrix} \mu^{-1} A^* g \\ g \end{pmatrix}
= \mu \begin{pmatrix} \mu^{-1} A^* g \\ g \end{pmatrix}.
\end{split}
\end{align}
\end{theorem}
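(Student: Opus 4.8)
The plan is to deduce all three parts from two structural facts already in hand: the block identity $Q^2 = \diag(A^* A, A A^*) = \diag(H_1, H_2)$ in \eqref{B.3}, \eqref{B.4}, and the polar decomposition relations \eqref{B.4a}--\eqref{B.4e} for $A$, $A^*$ together with $Q = V_Q|Q|$ from \eqref{B.16}. In effect, the whole theorem repackages Nelson's observation that $Q$ is controlled, via $Q = V_Q|Q|$, by the spectral theorem for $|Q| = \diag(|A|,|A^*|)$.

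For part $(i)$, I would first check that $U$ in \eqref{B.23} is unitary on $(\ker(Q))^{\bot} = \ol{\ran(A^*)}\oplus\ol{\ran(A)}$ (the latter identity coming from \eqref{B.8} and \eqref{B.17}): on that subspace $(V_A)^* V_A = P_{\ol{\ran(A^*)}}$ and $V_A (V_A)^* = P_{\ol{\ran(A)}}$ act as the respective identities by \eqref{B.4e}, so that a $2\times 2$ block multiplication yields $U^* U = U U^* = I$ there, with $U^{-1} = U^*$. Then I would compute $U Q U^{-1}$ by expanding the block products and substituting the polar identities $|A| = (V_A)^* A = A^* V_A$ on $\dom(|A|)$ and $|A^*| = V_A A^* = A (V_A)^*$ on $\dom(|A^*|)$ from \eqref{B.4c}, \eqref{B.4d}, together with $A = V_A|A|$ and $A^* = (V_A)^*|A^*|$ from \eqref{B.4a}, \eqref{B.4b}; the off-diagonal entries cancel and the diagonal entries collapse to $|A|$ and $-|A^*|$, which is \eqref{B.24}. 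One also verifies that $U$ carries $\dom(Q)\cap(\ker(Q))^{\bot}$ onto $(\dom(|A|)\oplus\dom(|A^*|))\cap(\ker(Q))^{\bot}$, which holds because $V_A$ and $(V_A)^*$ respect the relevant domains by \eqref{B.4a}--\eqref{B.4d}.

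For part $(ii)$, I would observe that $\sigma(Q^2) = \sigma(H_1)\cup\sigma(H_2)$ (from the block form \eqref{B.3}) while also $\sigma(Q^2) = \{\lambda^2 : \lambda\in\sigma(Q)\}$ by the spectral mapping theorem, so that $\zeta^2\in\rho(H_1)\cap\rho(H_2)$ forces $\zeta\in\rho(Q)$. Then the elementary identity $(Q-\zeta I)^{-1} = (Q+\zeta I)(Q^2-\zeta^2 I)^{-1}$, a consequence of $(Q-\zeta I)(Q+\zeta I) = Q^2-\zeta^2 I$, combined with $(Q^2-\zeta^2 I)^{-1} = \diag\big((H_1-\zeta^2 I_{\cH_1})^{-1},(H_2-\zeta^2 I_{\cH_2})^{-1}\big)$, produces \eqref{B.25} after one block multiplication by $Q+\zeta I$. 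The one point needing a word is that the off-diagonal entries $A(H_1-\zeta^2 I_{\cH_1})^{-1}$ and $A^*(H_2-\zeta^2 I_{\cH_2})^{-1}$ are bounded: since $\ran\big((H_1-\zeta^2 I_{\cH_1})^{-1}\big) = \dom(H_1) = \dom(A^* A)\subseteq\dom(A)$ and $A$ is closed, $A(H_1-\zeta^2 I_{\cH_1})^{-1}$ is everywhere defined and closed, hence bounded by the closed graph theorem, and analogously with $A$, $H_1$ replaced by $A^*$, $H_2$.

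For part $(iii)$, if $(f_1,f_2)^\top\in\dom(Q)$ with $Q(f_1,f_2)^\top = \eta(f_1,f_2)^\top$ and $\eta\neq 0$, applying $Q$ once more and using \eqref{B.3} gives $f_j\in\dom(H_j)$ and $H_j f_j = \eta^2 f_j$, $j=1,2$. Conversely, if $f\in\dom(H_1)$ and $H_1 f = \lambda^2 f$ with $\lambda\neq 0$, then $f\in\dom(A)$ and $Af\in\dom(A^*)$ (because $A^*(Af) = H_1 f = \lambda^2 f$ exists), hence $(f,\lambda^{-1}Af)^\top\in\dom(A)\oplus\dom(A^*) = \dom(Q)$, and the direct computation $Q(f,\lambda^{-1}Af)^\top = (\lambda^{-1}A^*Af, Af)^\top = \lambda(f,\lambda^{-1}Af)^\top$ finishes it; the $H_2$ case is identical after interchanging $A$ and $A^*$. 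I expect the only genuine friction anywhere here to be the bookkeeping of operator domains — confirming in $(i)$ that $U$ intertwines the domains and that each polar identity is applied where it is valid, and in $(ii)$ the closed-graph argument for the off-diagonal resolvent blocks; none of it is deep.
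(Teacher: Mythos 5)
Your proposal is correct. The paper itself supplies no proof of Theorem \ref{tB.3} --- it is stated with citations to \cite{BGGSS87} and \cite{Th88} --- but your argument is exactly the verification those identities are set up for: part $(i)$ follows from the polar-decomposition relations \eqref{B.4a}--\eqref{B.4e} by block multiplication, part $(ii)$ from $(Q-\zeta I)^{-1}=(Q+\zeta I)(Q^2-\zeta^2 I)^{-1}$ together with $Q^2=H_1\oplus H_2$, and part $(iii)$ from squaring (resp.\ a direct domain check using $\dom(Q)=\dom(A)\oplus\dom(A^*)$), and your attention to the domain bookkeeping and the closed-graph argument for the off-diagonal resolvent blocks covers the only points where care is genuinely needed.
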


\section{Sesquilinear Forms and Associated Operators} \lb{sC}
\renewcommand{\theequation}{C.\arabic{equation}}
\renewcommand{\thetheorem}{C.\arabic{theorem}}
\setcounter{theorem}{0} \setcounter{equation}{0}

In this appendix we describe a few basic facts on sesquilinear forms and
linear operators associated with them following \cite[Sect.\ 2]{GM09}.
Let $\cH$ be a complex separable Hilbert space with scalar product
$(\dott,\dott)_{\cH}$ (antilinear in the first and linear in the second
argument), $\cV$ a reflexive Banach space continuously and densely embedded
into $\cH$. Then also $\cH$ embeds continuously and densely into $\cV^*$.
That is,
\begin{equation}
\cV  \hookrightarrow \cH  \hookrightarrow \cV^*.     \lb{C.1}
\end{equation}
Here the continuous embedding $\cH\hookrightarrow \cV^*$ is accomplished via
the identification
\begin{equation}
\cH \ni v \mapsto (\dott,v)_{\cH} \in \cV^*,     \lb{C.2}
\end{equation}
and recall our convention in this manuscript that if $X$ denotes a Banach space,
$X^*$ denotes the {\it adjoint space} of continuous  conjugate linear functionals on $X$,
also known as the {\it conjugate dual} of $X$.

In particular, if the sesquilinear form
\begin{equation}
{}_{\cV}\langle \dott, \dott \rangle_{\cV^*} \colon \cV \times \cV^* \to \bbC
\end{equation}
denotes the duality pairing between $\cV$ and $\cV^*$, then
\begin{equation}
{}_{\cV}\langle u,v\rangle_{\cV^*} = (u,v)_{\cH}, \quad u\in\cV, \;
v\in\cH\hookrightarrow\cV^*,   \lb{C.3}
\end{equation}
that is, the $\cV, \cV^*$ pairing
${}_{\cV}\langle \dott,\dott \rangle_{\cV^*}$ is compatible with the
scalar product $(\dott,\dott)_{\cH}$ in $\cH$.

Let $T \in\cB(\cV,\cV^*)$. Since $\cV$ is reflexive, $(\cV^*)^* = \cV$, one has
\begin{equation}
T \colon \cV \to \cV^*, \quad  T^* \colon \cV \to \cV^*   \lb{C.4}
\end{equation}
and
\begin{equation}
{}_{\cV}\langle u, Tv \rangle_{\cV^*}
= {}_{\cV^*}\langle T^* u, v\rangle_{(\cV^*)^*}
= {}_{\cV^*}\langle T^* u, v \rangle_{\cV}
= \ol{{}_{\cV}\langle v, T^* u \rangle_{\cV^*}}.
\end{equation}
{\it Self-adjointness} of $T$ is then defined by $T=T^*$, that is,
\begin{equation}
{}_{\cV}\langle u,T v \rangle_{\cV^*}
= {}_{\cV^*}\langle T u, v \rangle_{\cV}
= \ol{{}_{\cV}\langle v, T u \rangle_{\cV^*}}, \quad u, v \in \cV,    \lb{C.5}
\end{equation}
{\it nonnegativity} of $T$ is defined by
\begin{equation}
{}_{\cV}\langle u, T u \rangle_{\cV^*} \geq 0, \quad u \in \cV,    \lb{C.6}
\end{equation}
and {\it boundedness from below of $T$ by $c_T \in\bbR$} is defined by
\begin{equation}
{}_{\cV}\langle u, T u \rangle_{\cV^*} \geq c_T \|u\|^2_{\cH},
\quad u \in \cV.
\lb{C.6a}
\end{equation}
(By \eqref{C.3}, this is equivalent to
${}_{\cV}\langle u, T u \rangle_{\cV^*} \geq c_T \,
{}_{\cV}\langle u, u \rangle_{\cV^*}$, $u \in \cV$.)

Next, let the sesquilinear form $\ga(\dott,\dott)\colon\cV \times \cV \to \bbC$
(antilinear in the first and linear in the second argument) be
{\it $\cV$-bounded}, that is, there exists a $c_{\ga}>0$ such that
\begin{equation}
|\ga(u,v)| \le c_{\ga} \|u\|_{\cV} \|v\|_{\cV},  \quad u, v \in \cV.
\end{equation}
Then $\wti A$ defined by
\begin{equation}
\wti A \colon \begin{cases} \cV \to \cV^*, \\
\, v \mapsto \wti A v = \ga(\dott,v), \end{cases}    \lb{C.7}
\end{equation}
satisfies
\begin{equation}
\wti A \in\cB(\cV,\cV^*) \, \text{ and } \,
{}_{\cV}\big\langle u, \wti A v \big\rangle_{\cV^*}
= \ga(u,v), \quad  u, v \in \cV.    \lb{C.8}
\end{equation}
Assuming further that $\ga(\dott,\dott)$ is {\it symmetric}, that is,
\begin{equation}
\ga(u,v) = \ol{\ga(v,u)},  \quad u,v\in \cV,    \lb{C.9}
\end{equation}
and that $\ga$ is {\it $\cV$-coercive}, that is, there exists a constant
$C_0>0$ such that
\begin{equation}
\ga(u,u)  \geq C_0 \|u\|^2_{\cV}, \quad u\in\cV,    \lb{C.10}
\end{equation}
respectively, then,
\begin{equation}
\wti A \colon \cV \to \cV^* \, \text{ is bounded, self-adjoint, and boundedly
invertible.}    \lb{C.11}
\end{equation}
Moreover, denoting by $A$ the part of $\wti A$ in $\cH$ defined by
\begin{align}
\dom(A) = \big\{u\in\cV \,|\, \wti A u \in \cH \big\} \subseteq \cH, \quad
A= \wti A\big|_{\dom(A)}\colon \dom(A) \to \cH,   \lb{C.12}
\end{align}
then $A$ is a (possibly unbounded) self-adjoint operator in $\cH$ satisfying
\begin{align}
& A \geq C_0 I_{\cH},   \lb{C.13}  \\
& \dom\big(A^{1/2}\big) = \cV.  \lb{C.14}
\end{align}
In particular,
\begin{equation}
A^{-1} \in\cB(\cH).   \lb{C.15}
\end{equation}
The facts \eqref{C.1}--\eqref{C.15} are a consequence of the Lax--Milgram
theorem and the second representation theorem for symmetric sesquilinear forms.
Details can be found, for instance, in \cite[Sects.\ VI.3, VII.1]{DL00},
\cite[Ch.\ IV]{EE89}, and \cite{Li62}. 

Next, consider a symmetric form $\gb(\dott,\dott)\colon \cV\times\cV\to\bbC$
and assume that $\gb$ is {\it bounded from below by $c_{\gb}\in\bbR$}, that is,
\begin{equation}
\gb(u,u) \geq c_{\gb} \|u\|_{\cH}^2, \quad u\in\cV.  \lb{C.19}
\end{equation}
Introducing the scalar product
$(\dott,\dott)_{\cV_{\gb}^{}}\colon \cV\times\cV\to\bbC$
(and the associated norm $\|\cdot\|_{\cV_{\gb}^{}}$) by
\begin{equation}
(u,v)_{\cV_{\gb}^{}} = \gb(u,v) + (1- c_{\gb})(u,v)_{\cH}, \quad u,v\in\cV,  \lb{C.20}
\end{equation}
turns $\cV$ into a pre-Hilbert space $(\cV; (\dott,\dott)_{\cV_{\gb}^{}})$,
which we denote by
$\cV_{\gb}^{}$. The form $\gb$ is called {\it closed} in $\cH$ if $\cV_{\gb}^{}$ is actually
complete, and hence a Hilbert space. The form $\gb$ is called {\it closable}
in $\cH$ if it has a closed extension. If $\gb$ is closed in $\cH$, then
\begin{equation}
|\gb(u,v) + (1- c_{\gb})(u,v)_{\cH}| \le \|u\|_{\cV_{\gb}^{}} \|v\|_{\cV_{\gb}^{}},
\quad u,v\in \cV,
\lb{C.21}
\end{equation}
and
\begin{equation}
|\gb(u,u) + (1 - c_{\gb})\|u\|_{\cH}^2| = \|u\|_{\cV_{\gb}^{}}^2, \quad u \in \cV,
\lb{C.22}
\end{equation}
show that the form $\gb(\dott,\dott)+(1 - c_{\gb})(\dott,\dott)_{\cH}$ is a
symmetric, $\cV$-bounded, and $\cV$-coercive sesquilinear form. Hence,
by \eqref{C.7} and \eqref{C.8}, there exists a linear map
\begin{equation}
\wti B_{c_{\gb}} \colon \begin{cases} \cV_{\gb}^{} \to \cV_{\gb}^*, \\
\, v \mapsto \wti B_{c_{\gb}} v = \gb(\dott,v) +(1 - c_{\gb})(\dott,v)_{\cH},
\end{cases}
\lb{C.23}
\end{equation}
with
\begin{equation}
\wti B_{c_{\gb}} \in\cB(\cV_{\gb}^{},\cV_{\gb}^*) \, \text{ and } \,
{}_{\cV_{\gb}^{}}\big\langle u, \wti B_{c_{\gb}} v \big\rangle_{\cV_{\gb}^*}
= \gb(u,v)+(1 -c_{\gb})(u,v)_{\cH}, \quad  u, v \in \cV,    \lb{C.24}
\end{equation}
in particular, $\wti B_{c_{\gb}}$ is bounded, self-adjoint, and boundedly invertible. 
Introducing the linear map
\begin{equation}
\wti B = \wti B_{c_{\gb}} + (c_{\gb} - 1)\wti I \colon \cV_{\gb}^{}\to\cV_{\gb}^*,
\lb{C.24a}
\end{equation}
where $\wti I\colon \cV_{\gb}^{}\hookrightarrow\cV_{\gb}^*$ denotes
the continuous inclusion (embedding) map of $\cV_{\gb}^{}$ into $\cV_{\gb}^*$, 
$\wti B$ is bounded and self-adjoint, and one
obtains a self-adjoint operator $B$ in $\cH$ by restricting $\wti B$ to $\cH$,
\begin{align}
\dom(B) = \big\{u\in\cV \,\big|\, \wti B u \in \cH \big\} \subseteq \cH, \quad
B= \wti B\big|_{\dom(B)}\colon \dom(B) \to \cH,   \lb{C.25}
\end{align}
satisfying the following properties:
\begin{align}
& B \geq c_{\gb} I_{\cH},  \lb{C.26} \\
& \dom\big(|B|^{1/2}\big) = \dom\big((B - c_{\gb}I_{\cH})^{1/2}\big)
= \cV,  \lb{C.27} \\
& \gb(u,v) = \big(|B|^{1/2}u, U_B |B|^{1/2}v\big)_{\cH}    \lb{C.28b} \\
& \hspace*{.97cm}
= \big((B - c_{\gb}I_{\cH})^{1/2}u, (B - c_{\gb}I_{\cH})^{1/2}v\big)_{\cH}
+ c_{\gb} (u, v)_{\cH}
\lb{C.28} \\
& \hspace*{.97cm}
= {}_{\cV_{\gb}^{}}\big\langle u, \wti B v \big\rangle_{\cV_{\gb}^*},
\quad u, v \in \cV, \lb{C.28a} \\
& \gb(u,v) = (u, Bv)_{\cH}, \quad  u\in \cV, \; v \in\dom(B),  \lb{C.29} \\
& \dom(B) = \{v\in\cV\,|\, \text{there exists an $f_v\in\cH$ such that}  \no \\
& \hspace*{3.05cm} \gb(w,v)=(w,f_v)_{\cH} \text{ for all $w\in\cV$}\},
\lb{C.30} \\
& Bu = f_u, \quad u\in\dom(B),  \no \\
& \dom(B) \text{ is dense in $\cH$ and in $\cV_{\gb}^{}$}.  \lb{C.31}
\end{align}
Properties \eqref{C.30} and \eqref{C.31} uniquely determine $B$.
Here $U_B$ in \eqref{C.28} is the partial isometry in the polar
decomposition of $B$, that is,
\begin{equation}
B=U_B |B|, \quad  |B|=(B^*B)^{1/2} \geq 0.   \lb{C.32}
\end{equation}
The operator $B$ is called the {\it operator associated with the form $\gb$}.

The norm in the Hilbert space $\cV_{\gb}^*$ is given by
\begin{equation}
\|\ell\|_{\cV_{\gb}^*}
= \sup \{|{}_{\cV_{\gb}^{}}\langle u, \ell \rangle_{\cV_{\gb}^*}| \,|\,
\|u\|_{\cV_{\gb}^{}} \le 1\}, \quad \ell \in \cV_{\gb}^*,   \lb{C.34}
\end{equation}
with associated scalar product,
\begin{equation}
(\ell_1,\ell_2)_{\cV_{\gb}^*}
= {}_{\cV_{\gb}^{}}\big\langle
\big(\wti B+(1-c_{\gb})\wti I\,\big)^{-1}
\ell_1, \ell_2 \big\rangle_{\cV_{\gb}^*},
\quad \ell_1, \ell_2 \in \cV_{\gb}^*.   \lb{C.35}
\end{equation}
Since
\begin{equation}
\big\|\big(\wti B + (1 - c_{\gb})\wti I\,\big)v \big\|_{\cV_{\gb}^*}
= \|v\|_{\cV_{\gb}^{}}, \quad v\in\cV,   \lb{C.36}
\end{equation}
the Riesz representation theorem yields
\begin{equation}
\big(\wti B+(1-c_{\gb})\wti I\,\big) \in \cB(\cV_{\gb}^{},\cV_{\gb}^*) \,
\text{ and }\big(\wti B + (1 - c_{\gb})\wti I\,\big) \colon \cV_{\gb}^{}
\to \cV_{\gb}^* \, \text{ is unitary.}   \lb{C.37}
\end{equation}
In addition,
\begin{align} 
{}_{\cV_{\gb}^{}}\big\langle u,\big(\wti B
+ (1 - c_{\gb})\wti I\,\big) v \big\rangle_{\cV_{\gb}^*}
& = \big((B+(1-c_{\gb})I_{\cH})^{1/2}u,
(B+(1-c_{\gb})I_{\cH})^{1/2}v \big)_{\cH}    \no \\
& = (u,v)_{\cV_{\gb}^{}},  \quad  u, v \in \cV_{\gb}^{}.   \lb{C.38}
\end{align}
In particular,
\begin{equation}
\big\|(B+(1-c_{\gb})I_{\cH})^{1/2}u\big\|_{\cH} = \|u\|_{\cV_{\gb}^{}},
\quad u \in \cV_{\gb}^{},   \lb{C.39}
\end{equation}
and hence 
\begin{equation}
(B+(1-c_{\gb})I_{\cH})^{1/2}\in\cB(\cV_{\gb}^{},\cH) \, \text{ and }
(B + (1 - c_{\gb})I_{\cH})^{1/2} \colon \cV_{\gb}^{} \to \cH \, \text{ is unitary.}
\lb{C.40}
\end{equation}
The facts \eqref{C.19}--\eqref{C.40} comprise the second representation
theorem of sesquilinear forms (cf.\ \cite[Sect.\ IV.2]{EE89},
\cite[Sects.\ 1.2--1.5]{Fa75}, and \cite[Sect.\ VI.2.6]{Ka80}).

We briefly supplement \eqref{C.19}--\eqref{C.40} with some considerations that 
hint at mapping properties of $\big(\wti B+(1-c_{\gb})\wti I\,\big)^{\pm 1/2}$ on a scale of spaces,  
which, for simplicity, we restrict to the triple of spaces $\cV_{\gb}^{}$, $\cH$, and $\cV_{\gb}^*$ in this appendix. We start by defining 
\begin{equation}
\big(\hat B_{c_{\gb}} + (1-c_{\gb}) \hat I\big)^{1/2} \colon \begin{cases} \cV_{\gb}^{} \to \cH, \\
\, v \mapsto (B + (1 - c_{\gb}) I_{\cH})^{1/2} v,
\end{cases}        \lb{C.41}
\end{equation}
and similarly,
\begin{equation}
\big(\check B_{c_{\gb}} + (1-c_{\gb}) \check I\big)^{1/2} \colon \begin{cases} \cH \to \cV_{\gb}^*, \\
\, f \mapsto \gb \big(\dott, (B + (1 - c_{\gb}) I_{\cH})^{-1/2} f\big) \\
\qquad \; + (1 - c_{\gb}) \big(\dott, (B + (1 - c_{\gb}) I_{\cH})^{-1/2} f \big)_{\cH}. 
\end{cases}         \lb{C.41a}
\end{equation}
Then both maps in \eqref{C.41} and \eqref{C.41a} are bounded and boundedly invertible. 
In particular,
\begin{align}
\begin{split} 
&\big(\hat B_{c_{\gb}} + (1-c_{\gb}) \hat I\big)^{1/2} \in \cB(\cV_{\gb}^{},\cH), \quad 
\big(\hat B_{c_{\gb}} + (1-c_{\gb}) \hat I\big)^{-1/2} \in \cB(\cH,\cV_{\gb}^{}),     \lb{C.41b} \\
&\big(\check B_{c_{\gb}} + (1-c_{\gb}) \check I\big)^{1/2} \in \cB(\cH,\cV_{\gb}^*),  \quad 
\big(\check B_{c_{\gb}} + (1-c_{\gb}) \check I\big)^{-1/2} \in \cB(\cV_{\gb}^*,\cH),    
\end{split} 
\end{align}
and
\begin{align}
& \big(\hat B_{c_{\gb}} + (1-c_{\gb}) \hat I\big)^{1/2} 
\big(\check B_{c_{\gb}} + (1-c_{\gb}) \check I\big)^{1/2} 
= \big(\wti B+(1-c_{\gb})\wti I\,\big) \in \cB(\cV_{\gb}^{},\cV_{\gb}^*),    \no \\
& \big(\check B_{c_{\gb}} + (1-c_{\gb}) \check I\big)^{-1/2} 
\big(\hat B_{c_{\gb}} + (1-c_{\gb}) \hat I\big)^{-1/2} 
= \big(\wti B+(1-c_{\gb})\wti I\,\big)^{-1} \in \cB(\cV_{\gb}^*,\cV_{\gb}^{}).   \lb{C.41c}
\end{align}
Due to self-adjointness of $\wti B$ as a bounded map from $\cV_{\gb}^{}$ to $\cV_{\gb}^*$ 
in the sense of \eqref{C.5}, one finally obtains that
\begin{align}
\begin{split} 
&\Big(\big(\hat B_{c_{\gb}} + (1-c_{\gb}) \hat I\big)^{\pm 1/2}\Big)^* 
= \big(\check B_{c_{\gb}} + (1-c_{\gb}) \check I\big)^{\pm 1/2},   \\
& \Big(\big(\check B_{c_{\gb}} + (1-c_{\gb}) \check I\big)^{\pm 1/2}\Big)^* 
= \big(\hat B_{c_{\gb}} + (1-c_{\gb}) \hat I\big)^{\pm 1/2}.    \lb{C.41d}
\end{split} 
\end{align}
Hence, we will follow standard practice in connection with chains of 
(Sobolev) spaces and refrain from painstakingly distinguishing the\, $\hat{}$ - and\, 
$\check{}$ -operations and simply resort to the notation
\begin{equation}
\big(\wti B+(1-c_{\gb})\wti I\,\big)^{\pm 1/2}      \lb{C.41e} 
\end{equation}
for the operators in \eqref{C.41b} in the bulk of this paper. 

A special but important case of nonnegative closed forms is obtained as
follows: Let $\cH_j$, $j=1,2$, be complex separable Hilbert spaces, and
$T\colon \dom(T)\to\cH_2$, $\dom(T)\subseteq \cH_1$, a densely defined
operator. Consider the nonnegative form
$\ga_T\colon \dom(T)\times \dom(T)\to\bbC$ defined by
\begin{equation}
\ga_T(u,v)=(Tu,Tv)_{\cH_2}, \quad u, v \in\dom(T).   \lb{C.42}
\end{equation}
Then the form $\ga_T$ is closed (resp., closable) in $\cH_1$ if and only if $T$ is.
If $T$ is closed, the unique nonnegative self-adjoint operator associated
with $\ga_T$ in $\cH_1$, whose existence is guaranteed by the second
representation theorem for forms, then equals $T^*T\geq 0$. In particular,
one obtains in addition to \eqref{C.42},
\begin{equation}
\ga_T(u,v) = (|T|u,|T|v)_{\cH_1}, \quad u, v \in\dom(T)=\dom(|T|).   \lb{C.43}
\end{equation}
Moreover, since
\begin{align}
\begin{split}
& \gb(u,v) +(1-c_{\gb})(u,v)_{\cH}
= \big((B+(1-c_{\gb})I_{\cH})^{1/2}u, (B+(1-c_{\gb})I_{\cH})^{1/2}v\big)_{\cH}, \\
& \hspace*{6.1cm}  u, v \in \dom(b) = \dom\big(|B|^{1/2}\big)=\cV,   \lb{C.43a}
\end{split}
\end{align}
and $(B+(1-c_{\gb})I_{\cH})^{1/2}$ is self-adjoint (and hence closed) in
$\cH$, a symmetric, $\cV$-bounded, and $\cV$-coercive form is densely
defined in $\cH\times\cH$ and closed in $\cH$ (a fact we will be using in the proof of
Theorem \ref{t2.3}). We refer to \cite[Sect.\ VI.2.4]{Ka80} and
\cite[Sect.\ 5.5]{We80} for details.

Next we recall that if $\ga_j$ are sesquilinear forms defined on
$\dom(\ga_j)$, $j=1,2$, bounded from below and closed, then also
\begin{equation}
(\ga_1 + \ga_2)\colon \begin{cases} (\dom(\ga_1)\cap\dom(\ga_2))\times
(\dom(\ga_1)\cap\dom(\ga_2)) \to \bbC, \\
(u,v) \mapsto (\ga_1 + \ga_2)(u,v) = \ga_1(u,v) + \ga_2(u,v) \end{cases}   \lb{C.44}
\end{equation}
is bounded from below and closed (cf., e.g., \cite[Sect.\ VI.1.6]{Ka80}).

Finally, we also recall the following perturbation theoretic fact:
Suppose $\ga$ is a sesquilinear form defined on $\cV\times\cV$, bounded
from below and closed, and let $\gb$ be a symmetric sesquilinear form
bounded with respect to $\ga$ with bound less than one, that is,
$\dom(\gb)\supseteq \cV\times\cV$, and that there exist $0\le \alpha < 1$ and
$\beta\ge 0$ such that
\begin{equation}
|\gb(u,u)| \le \alpha |\ga(u,u)| + \beta \|u\|_{\cH}^2, \quad u\in \cV.   \lb{C.45}
\end{equation}
Then
\begin{equation}
(\ga + \gb)\colon \begin{cases} \cV\times\cV \to \bbC, \\
\hspace*{.12cm} (u,v) \mapsto (\ga + \gb)(u,v) = \ga(u,v) + \gb(u,v) \end{cases}
\lb{C.46}
\end{equation}
defines a sesquilinear form that is bounded from below and closed
(cf., e.g., \cite[Sect.\,VI.1.6]{Ka80}). In the special case where $\alpha$
can be chosen arbitrarily small, the form $\gb$ is called infinitesimally
form bounded with respect to $\ga$.

\medskip

\noindent {\bf Acknowledgments.}
We gratefully acknowledge valuable correspondence with Rostyslav Hryniv,
Mark Malamud, Roger Nichols, Fritz Philipp, Barry Simon, G\"unter Stolz, and 
Gerald Teschl. In addition, we are indebted to Igor Verbitsky for helpful discussions.


\end{document}